\newtheorem{theorem}{Theorem}
\newtheorem{lemma}{Lemma}
\newtheorem{proposition}{Proposition}
\newtheorem{definition}{Definition}
\newtheorem{corollary}{Corollary}
\newtheorem{example}{Example}
\newcommand{\R}{\mathbb{R}}
\newcommand{\EE}[1]{\mathbb{E}\left[{#1}\right]}
\newcommand{\EEst}[2]{\mathbb{E}\left[{#1}\  \middle| \ {#2}\right]}
\newcommand{\Ep}[2]{\mathbb{E}_{{#1}}\left[{#2}\right]}
\newcommand{\Epst}[3]{\mathbb{E}_{{#1}}\left[{#2}\  \middle| \ {#3}\right]}
\newcommand{\PP}[1]{\mathbb{P}\left\{{#1}\right\}}
\newcommand{\PPst}[2]{\mathbb{P}\left\{{#1}\  \middle| \ {#2}\right\}}
\newcommand{\Ppst}[3]{\mathbb{P}_{{#1}}\left\{{#2}\  \middle| \ {#3}\right\}}
\newcommand{\Pp}[2]{\mathbb{P}_{{#1}}\left\{{#2}\right\}}
\newcommand{\one}[1]{{\mathds{1}}_{{#1}}}
\newcommand{\iidsim}{\stackrel{\textnormal{i.i.d.}}{\sim}}
\newcommand{\Bcal}{\mathcal{B}}
\newcommand{\Xcal}{\mathcal{X}}
\newcommand{\BX}{\mathcal{B}(\Xcal)}
\newcommand{\BXn}{\mathcal{B}(\Xcal^n)}
\newcommand{\BXinf}{\mathcal{B}(\Xcal^\infty)}
\newcommand{\Pcal}{\mathcal{P}}
\newcommand{\Mcal}{\mathcal{M}}
\newcommand{\Ecal}{\mathcal{E}}
\newcommand{\Ftail}{\mathcal{F}_{\textnormal{tail}}}
\newcommand{\Fcal}{\mathcal{F}}
\newcommand{\Scal}{\mathcal{S}}
\newcommand{\dtv}{\mathsf{d}_{\mathsf{TV}}}
\def\d{\mathsf{d}}
\def\asto{\overset{\textnormal{a.s.}}{\to}}
\def\aseq{\overset{\textnormal{a.s.}}{=}}
\def\th{^{\textnormal{th}}}
\DeclareFontFamily{U}{mathx}{\hyphenchar\font45}
\DeclareFontShape{U}{mathx}{m}{n}{
      <5> <6> <7> <8> <9> <10>
      <10.95> <12> <14.4> <17.28> <20.74> <24.88>
      mathx10
      }{}
\DeclareSymbolFont{mathx}{U}{mathx}{m}{n}
\DeclareMathAccent{\widebar}{0}{mathx}{"73}
\DeclareMathAccent{\widecheck}{0}{mathx}{"71}
\def\dF{\mathsf{dF}}
\def\01{\mathsf{01}}
\def\LLN{\mathsf{LLN}}
\title{De Finetti's theorem and related results for \\ infinite weighted 
  exchangeable sequences}  
\author{
Rina Foygel Barber\thanks{Department of Statistics, University of Chicago}, 
Emmanuel J.~Cand{\`e}s\thanks{Departments of Statistics and Mathematics,
  Stanford University}, 
Aaditya Ramdas\thanks{Departments of Statistics and Machine Learning, Carnegie
  Mellon University} , 
Ryan J.~Tibshirani\thanks{Department of Statistics, University of California,
  Berkeley}}  
\date{}
\begin{document}  
\maketitle

\begin{abstract}
De Finetti's theorem, also called the de Finetti--Hewitt--Savage theorem, is a
foundational result in probability and statistics. Roughly, it says that an
infinite sequence of exchangeable random variables can always be written as a 
mixture of independent and identically distributed (i.i.d.) sequences of random
variables. In this paper, we consider a weighted generalization of
exchangeability that allows for weight functions to modify the individual 
distributions of the random variables along the sequence, provided that---modulo
these weight functions---there is still some common exchangeable base
measure. We study conditions under which a de Finetti-type representation
exists for weighted exchangeable sequences, as a mixture of distributions which
satisfy a weighted form of the i.i.d.\ property. Our approach establishes a
nested family of conditions that lead to weighted extensions of other well-known
related results as well, in particular, extensions of the zero-one law and the
law of large numbers.  
\end{abstract}

\section{Introduction}\label{sec:intro}

Nearly 100 years ago, \citet{definetti1929funzione} established a result that
connects an infinite exchangeable sequence of binary random variables to a
mixture of i.i.d.\ sequences of binary random variables. De Finetti's result
says that $X_1,X_2, \dots \in \{0,1\}$ are exchangeable if and only if a draw 
from their joint distribution can be equivalently represented as:
\[
\textnormal{sample $p \sim \mu$, then draw $X_1,X_2,\dots \iidsim
  \mathrm{Bernoulli}(p)$},
\]
for some distribution $\mu$ on $[0,1]$. In other words, any infinite
exchangeable binary sequence can be represented as a mixture of i.i.d.\
Bernoulli sequences.

This result has been extended well beyond the binary case, to a general space
$\Xcal$, due to work by de Finetti and others, most notably
\citet{hewitt1955symmetric}. The more general result is known as the de
Finetti--Hewitt--Savage theorem, but is also often simply called de Finetti's
theorem. It says that, under fairly mild assumptions on $\Xcal$, any infinite
exchangeable sequence $X_1,X_2, \dots \in \Xcal$ can be represented as a mixture
of i.i.d.\ sequences. See Theorem \ref{thm:definetti} below for a formal
statement, and the paragraphs that follow for more references and discussion of
the history of contributions in this area. De Finetti's theorem is widely
considered to be of foundational importance in probability and statistics. Many
authors also view it as a central point of motivation for Bayesian inference;
e.g., see \citet{schervish2012theory} for additional background on the role de
Finetti's theorem plays in statistical theory and methodology.

Some authors have also studied a \emph{weighted} notion of exchangeability. 
This was considered by \citet{lauritzen1988extremal} in the binary case $\Xcal =
\{0,1\}$, and by \citet{tibshirani2019conformal} for general $\Xcal$ in the
context of predictive inference under distribution shift. For example, in the
case of continuously-distributed and real-valued random variables
$X_1,\dots,X_n$, we say (following \citet{tibshirani2019conformal}) that their
distribution is \emph{weighted exchangeable} with respect to given weight
functions $\lambda_1,\dots,\lambda_n$ if the joint density $f_n$ of
$X_1,\dots,X_n$ factorizes as 
\[
f_n(x_1,\dots,x_n) = \bigg( \prod_{i=1}^n \lambda_i(x_i) \bigg) \cdot
g_n(x_1,\dots,x_n), \quad \textnormal{for $x_1,\dots,x_n \in \R$},
\] 
for a function $g_n$ that is symmetric, i.e., invariant to any permutation of
its arguments. Note that if instead $f_n$ itself is symmetric (equivalently, if
the above holds with all constant weight functions $\lambda_i \equiv 1$), then
this reduces to the ordinary (unweighted) notion of exchangeability. The
definition of weighted exchangeability can be extended to infinite sequences in
the usual way: by requiring that the joint distribution of each finite
subsequence of random variables be weighted exchangeable as per the above. It
also extends beyond continuously-distributed, real-valued random variables. See
Definitions \ref{def:wtd_exch_finite} and \ref{def:wtd_exch} for the formal
details.

In light of what de Finetti's theorem teaches us about exchangeable
distributions, it is natural to ask whether infinite weighted exchangeable
distributions have an analogous property: to put it informally, can an infinite
weighted exchangeable sequence be represented as a mixture of infinite
weighted i.i.d.\ (i.e., weighted exchangeable and mutually independent)
sequences?  

It turns out that the weight functions $\lambda_1,\lambda_2,\dots$ play a
critical role in determining whether or not this is true, and a central question
in this paper is:\footnote{As we will describe later, this question was already
  answered by \citet{lauritzen1988extremal} for the binary case $\Xcal =
  \{0,1\}$.}   
\begin{itemize}
\item[Q1.] Which sequences of weight functions lead to a generalized de Finetti 
  representation?   
\end{itemize}  
Rather than trying to answer the above question on its own, we find it
interesting to embed it into a larger problem of comparing the answers to
\emph{three} questions---Q1, and the following two questions, on weighted
extensions of two other well-known results in the i.i.d.\ case, namely, the
zero-one law (Theorem \ref{thm:01}) and the law of large numbers (Theorem
\ref{thm:lln}):      
\begin{itemize}
\item[Q2.] Which sequences of weight functions lead to a generalized zero-one
  law? 
\item[Q3.] Which sequences of weight functions lead to a generalized law of
  large numbers?
\end{itemize} 
Our main result, in Theorem \ref{thm:main} below, relates questions Q1--Q3. In
short we show that ``answers to Q1'' (weight functions leading to a de
Finetti-type representation) are a subset of ``answers to Q2'', which are
themselves a subset of ``answers to Q3''. We also provide complementary 
necessary and sufficient conditions.  

\paragraph{General assumptions and notation.}

Throughout this paper, we will work in a space $\Xcal$ that we assume is a
separable complete metric space (this certainly includes, for example, any 
finite-dimensional Euclidean space, $\Xcal = \R^d$). We denote the Borel
$\sigma$-algebra of $\Xcal$ by $\BX$. Under these assumptions, we call $\Xcal$, 
equipped with its Borel $\sigma$-algebra $\BX$, a standard Borel
space.

We write $\Xcal^n = \Xcal \times \cdots \times \Xcal$ ($n$ fold) and
$\Xcal^\infty = \Xcal \times \Xcal \times \dots$ for the finite and countably 
infinite product spaces, respectively, and $\BXn$ and $\BXinf$ for their
respective Borel $\sigma$-algebras. We note that since $\Xcal$ is a standard
Borel space, it is second countable, which means it has a countable base. This
implies (among other important facts) that the countable product $\Xcal^\infty$
is also a standard Borel space, and $\BXinf$ is generated by the product of
individual Borel $\sigma$-algebras $\BX \times \BX \times \dots$ (e.g., see
Lemma 6.4.2 part (ii) in \citet{bogachev2007measure}). This same property is of
course also true for finite products.  

For a measure $Q$ on $\Xcal^\infty$ and any $n \geq 1$, we use $Q_n$ for the  
associated marginal measure on $\Xcal^n$, 
\[
Q_n(A) = Q(A \times \Xcal \times \Xcal \times \dots), \quad \textnormal{for
  $A \in \BXn$}. 
\]
We use \smash{$\Mcal_\Xcal$} to denote the set of measures on
$\Xcal$.\footnote{We allow measures on $\Xcal$ to be nonfinite, 
  i.e., a measure $Q \in \Mcal_\Xcal$ may have $Q(\Xcal) = \infty$.}  
This is itself a measure space, with a $\sigma$-algebra generated by sets of the
form \smash{$\{P \in \Mcal_\Xcal : P(A) \leq t\}$}, for $A\in\BX$ and $t \geq
0$. We use \smash{$\Pcal_\Xcal \subseteq \Mcal_\Xcal$} to denote the set of 
distributions on $\Xcal$, that is, \smash{$\Pcal_\Xcal = \{P \in \Mcal_\Xcal :
  P(\Xcal) = 1\}$}. For \smash{$P \in\Pcal_\Xcal$}, we write $P^n = P \times P
\times \cdots \times P$ ($n$ fold) and $P^\infty = P \times P \times \dots$ for
the finite and countably infinite product distributions on $\Xcal^n$ and
$\Xcal^\infty$, respectively; so that writing $(X_1,X_2,\dots) \sim P^\infty$ is
the same as writing \smash{$X_1,X_2,\dots \iidsim P$}, and similarly for
$P^n$. We use $\delta_x$ to denote the distribution defined by a point mass at
any given $x \in \Xcal$. Finally, we use \smash{$\aseq$} and \smash{$\asto$} to
denote almost sure equality and almost sure convergence, respectively.

\section{Exchangeability and weighted exchangeability}

Exchangeability is a property of a sequence of random variables that expresses
the notion, roughly speaking, that the sequence is ``equally likely to appear in
any order''. For a distribution on a finite sequence $(X_1,\dots,X_n) \in
\Xcal^n$, or more generally, for a measure on $\Xcal^n$, we can define this
property in terms of invariance to permutations, as follows.\footnote{Throughout 
  this paper, we will use the terms ``exchangeable'' and ``i.i.d.'' (and later
  on, ``weighted exchangeable'' and ``weighted i.i.d.'') to refer either to a
  measure $Q$ itself, or to a random sequence $X$ drawn from $Q$ (when $Q$ is a
  distribution), depending on the context.}  

\begin{definition}[Finite exchangeability]\label{def:exch_finite}
A measure $Q$ on $\Xcal^n$ is called \emph{exchangeable} provided that, for all 
$A_1,\dots,A_n \in \BX$,     
\[
Q(A_1 \times \dots \times A_n) = 
Q(A_{\sigma(1)} \times \dots \times A_{\sigma(n)}),
\quad \textnormal{for all $\sigma\in\Scal_n$}, 
\]
where $\Scal_n$ is the set of permutations on $[n] = \{1,\dots,n\}$. 
\end{definition}

This definition can be extended to a distribution on an infinite sequence
$(X_1,X_2,\dots) \in \Xcal^\infty$, or more generally, to a measure on  
$\Xcal^\infty$, as follows.  

\begin{definition}[Infinite exchangeability]\label{def:exch}
A measure $Q$ on $\Xcal^\infty$ is called \emph{exchangeable} if for all $n \geq
1$ the corresponding marginal measure $Q_n$ is exchangeable.     
\end{definition}

In this paper, we will study a weighted generalization of exchangeability.
We denote by \smash{$\Lambda = \Lambda_\Xcal$} the set of measurable functions 
from $\Xcal$ to $(0,\infty)$, with $\Lambda^n$ or $\Lambda^\infty$ denoting the   
finite or countably infinite product, respectively, of this space. First, we
define the notion of weighted exchangeability for finite sequences.   

\begin{definition}[Finite weighted exchangeability]\label{def:wtd_exch_finite} 
Given $\lambda = (\lambda_1,\dots,\lambda_n) \in \Lambda^n$, a measure $Q$ on
$\Xcal^n$ is called \emph{$\lambda$-weighted exchangeable} if the measure
\smash{$\widebar{Q}$} defined as  
\[
\widebar{Q}(A) = \int_A \frac{\d{Q}(x_1,\dots,x_n)}{\lambda_1(x_1) \cdots
  \lambda_n(x_n)}, \quad \textnormal{for $A\in\BXn$}
\]
is an exchangeable measure.
\end{definition}

Similar to the unweighted case, we can extend this definition to infinite
sequences.

\begin{definition}[Infinite weighted exchangeability]\label{def:wtd_exch}
Given $\lambda = (\lambda_1,\lambda_2,\dots) \in \Lambda^\infty$, a measure $Q$
on $\Xcal^\infty$ is called \emph{$\lambda$-weighted exchangeable} if for all $n
\geq 1$ the corresponding marginal measure $Q_n$ is 
$(\lambda_1,\dots,\lambda_n)$-weighted exchangeable.      
\end{definition}

\subsection{Background on exchangeable distributions}

This section provides background on (unweighted) exchangeability, and reviews key
properties. Readers familiar with these topics may wish to skip ahead to Section
\ref{sec:background_weighted}.
  
\subsubsection{Mixtures and i.i.d.\ sequences}

An important special case of exchangeability is given by an independent and
identically distributed (i.i.d.) process. For any distribution $P$ on $\Xcal$,
the product $P^n$ is a finitely exchangeable distribution on $\Xcal^n$, whereas
the countable product $P^\infty = P \times P \times \dots$ is an infinitely
exchangeable distribution on $\Xcal^\infty$. More generally, exchangeability is
always preserved under mixtures, as the following result recalls, which we state
without proof.
 
\begin{proposition}\label{prop:exch_mix}
Any mixture of exchangeable measures on $\Xcal^n$ (or $\Xcal^\infty$) is itself
an exchangeable measure on $\Xcal^n$ (or $\Xcal^\infty$). 
\end{proposition}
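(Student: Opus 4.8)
The plan is to reduce everything to the defining permutation-invariance identity of Definition \ref{def:exch_finite}, exploiting the fact that mixing is a linear operation and hence preserves pointwise equalities. First I would make precise what a mixture means: we are given a family $\{Q_\theta\}_{\theta \in \Theta}$ of exchangeable measures indexed by some $\theta$, together with a mixing measure $\mu$ on $\Theta$, and the mixture $Q$ is defined by $Q(A) = \int_\Theta Q_\theta(A)\, \d\mu(\theta)$. For this integral to be well-defined, the map $\theta \mapsto Q_\theta(A)$ must be measurable for each measurable $A$; this is part of what it means for $Q$ to be a mixture, so I take it as given.

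Next I would dispatch the finite case, for measures on $\Xcal^n$. Fix sets $A_1, \dots, A_n \in \BX$ and a permutation $\sigma \in \Scal_n$. Since each $Q_\theta$ is exchangeable, Definition \ref{def:exch_finite} gives
\[
Q_\theta(A_1 \times \cdots \times A_n) = Q_\theta(A_{\sigma(1)} \times \cdots \times A_{\sigma(n)}) \quad \textnormal{for every $\theta$.}
\]
Integrating both sides against $\mu$ and using linearity of the integral yields the identical equation with $Q$ in place of $Q_\theta$, which is precisely the statement that $Q$ is exchangeable.

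For the infinite case, over $\Xcal^\infty$, the key observation is that marginalization commutes with mixing. Indeed, for each $n \geq 1$ and each $A \in \BXn$,
\[
Q_n(A) = Q(A \times \Xcal \times \cdots) = \int_\Theta Q_\theta(A \times \Xcal \times \cdots)\, \d\mu(\theta) = \int_\Theta (Q_\theta)_n(A)\, \d\mu(\theta),
\]
where the first and last equalities are just the definition of the marginal applied to $Q$ and to each $Q_\theta$. Hence $Q_n$ is itself a mixture, against the same $\mu$, of the measures $(Q_\theta)_n$, each of which is exchangeable because $Q_\theta$ is infinitely exchangeable (Definition \ref{def:exch}). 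Applying the finite-case conclusion already established shows that each $Q_n$ is exchangeable, and since this holds for all $n$, $Q$ is infinitely exchangeable by Definition \ref{def:exch}.

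I do not expect any genuine obstacle here: the argument is pure bookkeeping, since the permutation-invariance identity holds pointwise in $\theta$ and survives integration by linearity. The only point deserving a moment's care is the measurability of $\theta \mapsto Q_\theta(\cdot)$ needed to define the mixture integral, but this is built into the hypothesis that $Q$ is a mixture; and the commutation of marginalization with mixing displayed above is immediate from the definition $Q_n(A) = Q(A \times \Xcal \times \cdots)$, requiring no further appeal to Fubini. This is why the statement is recorded without proof.
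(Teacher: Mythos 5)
Your proof is correct and is exactly the routine argument the paper has in mind when it states Proposition \ref{prop:exch_mix} without proof: the permutation-invariance identity of Definition \ref{def:exch_finite} is linear in the measure, so it survives integration against the mixing measure, and the infinite case follows since marginalization commutes with mixing. No gaps; your care about measurability of $\theta \mapsto Q_\theta(A)$ being part of the definition of a mixture is the right way to handle that point.
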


In particular, this means that any mixture of i.i.d.\ sequences has an
exchangeable distribution.  

\begin{corollary}\label{cor:mix_of_iid}
For any distribution $\mu$ on \smash{$\Pcal_\Xcal$}, the distribution $Q$ on
$\Xcal^n$ defined by
\[
\textnormal{sample $P \sim \mu$, then draw $X_1,\dots,X_n \iidsim P$} 
\]
is exchangeable. Similarly, the distribution $Q$ on $\Xcal^\infty$ defined by
\[
\textnormal{sample $P \sim \mu$, then draw $X_1,X_2,\dots \iidsim P$}
\]
is exchangeable.
\end{corollary}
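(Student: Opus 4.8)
The plan is to recognize the distribution $Q$ described by the two-stage sampling scheme as a mixture of i.i.d.\ measures, and then to invoke Proposition \ref{prop:exch_mix} directly. Concretely, in the finite case the hierarchical description ``sample $P\sim\mu$, then draw $X_1,\dots,X_n\iidsim P$'' specifies, by the law of total probability, the measure
\[
Q(A) = \int_{\Pcal_\Xcal} P^n(A)\,\d\mu(P), \quad \textnormal{for $A\in\BXn$},
\]
so that $Q$ is precisely the $\mu$-mixture of the family $\{P^n : P\in\Pcal_\Xcal\}$. Each member $P^n$ of this family is exchangeable, as already noted above: checking Definition \ref{def:exch_finite} on a product set, $P^n(A_1\times\dots\times A_n)=\prod_{i=1}^n P(A_i)$, which is invariant under any permutation $\sigma\in\Scal_n$ of the factors by commutativity of multiplication. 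Since $Q$ is thus a mixture of exchangeable measures, Proposition \ref{prop:exch_mix} immediately yields that $Q$ is exchangeable.

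The infinite case is handled in exactly the same way, replacing $P^n$ by the countable product $P^\infty$ and $\BXn$ by $\BXinf$: the scheme defines $Q(A)=\int_{\Pcal_\Xcal}P^\infty(A)\,\d\mu(P)$, each $P^\infty$ is exchangeable (equivalently, each of its marginals is exchangeable, per Definition \ref{def:exch}), and Proposition \ref{prop:exch_mix} applies verbatim.

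The one point requiring care---and the only place where the argument is not purely formal---is that the displayed mixture integrals be well-defined, i.e.\ that $P\mapsto P^n(A)$ (respectively $P\mapsto P^\infty(A)$) be a measurable function of $P$ for each fixed measurable set $A$. For a product set this follows from the definition of the $\sigma$-algebra on $\Mcal_\Xcal$, under which each evaluation map $P\mapsto P(A_i)$ is measurable; the extension from product sets to arbitrary $A\in\BXn$ (or $A\in\BXinf$) is then a routine monotone-class argument. I would expect this measurability bookkeeping, rather than the exchangeability itself, to be the main (though still minor) obstacle, and it is in any case already implicit in the statement of Proposition \ref{prop:exch_mix}, which presupposes that mixtures of measures are well-defined objects.
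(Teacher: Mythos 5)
Your proposal is correct and matches the paper's approach exactly: the paper presents Corollary \ref{cor:mix_of_iid} as an immediate consequence of Proposition \ref{prop:exch_mix}, noting that $Q$ is a mixture of the exchangeable distributions $P^n$ (resp.\ $P^\infty$), which is precisely your argument. Your additional remarks on permutation invariance of $\prod_{i=1}^n P(A_i)$ and on measurability of $P \mapsto P^n(A)$ are sound, if more detail than the paper bothers to record.
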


For the infinite setting, we use \smash{$Q = (P^\infty )_\mu$} to denote this
mixture. That is, for a distribution $\mu$ on \smash{$\Pcal_\Xcal$}, the   
distribution \smash{$Q = (P^\infty )_\mu$} is the infinitely exchangeable
distribution defined by  
\[
Q(A) = \Ep{P \sim \mu}{(P^\infty)(A)}, \quad \textnormal{for $A \in \BXinf$}. 
\]
To unpack this further, note that we can also equivalently define this mixture
distribution via
\[
Q(A_1 \times A_2 \times \cdots) = \int \prod_{i=1}^\infty P(A_i) \, \d\mu(P),  
\quad \textnormal{for $A_1,A_2,\dots \in \BX$},
\]
as $\BXinf$ is generated by $\BX \times \BX \times \cdots$ (which holds because
$\Xcal$ is standard Borel).  

\subsubsection{De Finetti's theorem}\label{sec:deFin_background}

The well-known de Finetti theorem, sometimes called the de
Finetti--Hewitt--Savage theorem, gives a converse to Corollary
\ref{cor:mix_of_iid}, for the infinite setting.  

\begin{theorem}[De Finetti--Hewitt--Savage]\label{thm:definetti}
Let $Q$ be an exchangeable distribution on $\Xcal^\infty$. Then there exists a
distribution $\mu$ on $\Pcal_\Xcal$ such that $Q = (P^\infty)_\mu$.
Moreover, the distribution $\mu$ satisfying this equality is
  unique.
\end{theorem}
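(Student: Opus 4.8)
The plan is to establish existence via the empirical-measure (reverse martingale) route, which also makes uniqueness transparent and dovetails with the law of large numbers developed later in the paper. First I would introduce the random empirical measures $P_n = \frac{1}{n}\sum_{i=1}^n \delta_{X_i}$, where $(X_1,X_2,\dots) \sim Q$, and show that they converge. The key structural fact is that, for any bounded measurable $f \colon \Xcal \to \R$, the averages $\frac{1}{n}\sum_{i=1}^n f(X_i)$ form a reverse martingale with respect to the decreasing family of $\sigma$-algebras $\mathcal{E}_n$ generated by events symmetric in the first $n$ coordinates. Exchangeability of $Q$ is exactly what is needed, since it gives $\EEst{f(X_i)}{\mathcal{E}_n} = \frac{1}{n}\sum_{j=1}^n f(X_j)$ for every $i \le n$. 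By the reverse martingale convergence theorem, each such average converges almost surely (and in $L^1$) to $\EEst{f(X_1)}{\Ecal}$, where $\Ecal = \bigcap_n \mathcal{E}_n$ is the exchangeable $\sigma$-algebra.

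Next I would upgrade these one-dimensional convergences into almost sure weak convergence of $P_n$ to a random probability measure $P_\infty \in \Pcal_\Xcal$. This is where separability and completeness of $\Xcal$ do the work: because $\Xcal$ is a separable complete metric space, there is a countable convergence-determining family of bounded continuous functions, so outside a single null set the limits $\lim_n \int f \, \d P_n$ exist simultaneously for all $f$ in this family and, via a tightness and representation argument, define a genuine Borel probability measure $P_\infty$, measurable as a $\Pcal_\Xcal$-valued map. I then need the conditional i.i.d.\ property: for bounded measurable $f_1,\dots,f_k$, the identity $\EEst{\prod_{j=1}^k f_j(X_j)}{\Ecal} = \prod_{j=1}^k \int f_j \, \d P_\infty$ holds. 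This follows from exchangeability by computing the conditional expectation of $\prod_j f_j(X_{i_j})$ averaged over distinct indices $i_1,\dots,i_k \le n$ and passing to the limit $n \to \infty$; the distinct-index averages factor asymptotically because the ``diagonal'' terms with repeated indices contribute only an $O(1/n)$ fraction. Taking $\mu$ to be the law of $P_\infty$ then yields $Q = (P^\infty)_\mu$ on generating rectangles $A_1 \times \cdots \times A_k \times \Xcal \times \cdots$, hence on all of $\BXinf$.

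For uniqueness, I would observe that the construction exhibits $P_\infty$ as an almost sure functional of the sample path $(X_1,X_2,\dots)$, namely the weak limit of its empirical measures. Consequently, if $Q = (P^\infty)_\mu = (P^\infty)_{\mu'}$, then under either mixture the empirical measures converge almost surely to $P$ itself (by the ordinary law of large numbers applied within each i.i.d.\ component), so both $\mu$ and $\mu'$ must coincide with the pushforward law under $Q$ of this limiting-measure functional; therefore $\mu = \mu'$.

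The step I expect to be the main obstacle is the second one: passing from pointwise, per-test-function almost sure convergence to the existence of a bona fide random probability measure $P_\infty$ on $\Xcal$ with the required measurability, while controlling the uncountably many null sets. This is precisely where the standard Borel assumption is essential, and it is the part that genuinely goes beyond de Finetti's original binary argument, in which the limit is merely a scalar frequency in $[0,1]$ and no measure-reconstruction is needed.
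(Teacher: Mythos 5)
The paper never proves Theorem \ref{thm:definetti}; it imports it from the literature (the Hewitt--Savage result, with the standard Borel case attributed to \citet{hewitt1955symmetric} via \citet{varadarajan1963groups}), so any proof you give is necessarily a different route from ``the paper's.'' Your proposal is a correct outline of the standard probabilistic proof: the reverse-martingale/empirical-measure argument. The contrast with the cited proof is real and worth noting. Hewitt and Savage argue functional-analytically, identifying the exchangeable probability measures as a simplex whose extreme points are the product measures (an extreme-point/Choquet-type representation on compact Hausdorff spaces), with more general spaces handled by embedding; your route instead constructs the mixing measure explicitly as the law of the almost sure weak limit of the empirical measures $P_n$. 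The probabilistic route buys an explicit, constructive $\mu$, makes uniqueness essentially automatic (the mixing measure is the pushforward of $Q$ under the fixed measurable ``limit of empirical measures'' functional, since within each i.i.d.\ component that limit recovers $P$), and meshes tightly with the machinery this paper does develop: your identity $\EEst{f(X_i)}{\Ecal_n} = \frac{1}{n}\sum_{j=1}^n f(X_j)$ is exactly Proposition \ref{prop:Xi_condition_on_Ecalm_exch}, your downward-convergence step is the same use of Levy's Downwards Theorem as in Section \ref{sec:proof_main1}, and your uniqueness argument is the paper's own post-LLN discussion of recovering $P$ from the data. One technical suggestion on the step you rightly flag as the main obstacle: rather than reconstructing a random measure from countably many per-function limits via tightness and a representation argument (which can be made to work on a Polish space but is the fiddliest part), you can short-circuit it exactly as the paper does in its appendix on regularity: since $\Xcal^\infty$ is standard Borel, a regular conditional distribution of $X_1$ given $\Ecal_\infty$ exists \citep{faden1985existence}; take $P_\infty$ to be that kernel, and the reverse-martingale limits then identify $\int_\Xcal f(x)\,\d P_\infty(x)$ with $\lim_n \frac{1}{n}\sum_{i=1}^n f(X_i)$ almost surely, after which your factorization argument (distinct-index averages, with diagonal terms contributing $O(1/n)$) and the rectangle-class extension go through unchanged.
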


This result was initially proved by \citet{definetti1929funzione} for the
special case of binary sequences, where $\Xcal=\{0,1\}$. In this case the mixing
distribution $\mu$ can simply be viewed as a distribution on $p \in [0,1]$,
where $p$ gives the parameter for the Bernoulli distribution $P$ on $\Xcal$, and
then Theorem \ref{thm:definetti} has a particularly simple interpretation: given
any exchangeable distribution $Q$ on infinite binary sequences, there exists a
distribution $\mu$ on $[0,1]$ such that draws from $Q$ can be expressed as
\[
\textnormal{sample $p \sim \mu$, then draw $X_1,X_2,\dots \iidsim
  \mathrm{Bernoulli}(p)$},
\]
as described earlier in Section \ref{sec:intro}. After his pioneering 1929 work,
\citet{definetti1937prevision} extended this theorem to real-valued sequences,
where $\Xcal=\R$, which was also later established independently by
\citet{dynkin1953classes}. \citet{hewitt1955symmetric} generalized de Finetti's
result to a much more abstract setting, which covers what are called Baire
measurable random variables taking values in a compact Hausdorff space.
Further generalizations to broader classes of spaces
can be found in \citet{farrell1962representation, maitra1977integral}, among
others.
Quite recently, \citet{alam2020generalizing} generalized this to cover any
Hausdorff space, under the assumption that the common marginal distribution
$Q_1$ on $\Xcal$ is a Radon measure. In particular, as stated in Theorem 
\ref{thm:definetti}, we emphasize that de Finetti's theorem holds when $\Xcal$
is a standard Borel space---this can be seen a consequence of the general
topological result in \citet{hewitt1955symmetric}, as pointed out by
\citet{varadarajan1963groups}. See also the discussion after Theorem 1.4 in
\citet{alam2020generalizing}, or Theorem 2.1 in \citet{fritz2021finetti}. 

Notably, completeness---inherent to standard Borel spaces---is critical
here because without it de Finetti's theorem can fail. This was shown by
\citet{dubins1979exchangeable}, who constructed an infinite sequence of Borel
measurable exchangeable random variables in a separable metric space that cannot
be expressed as a mixture of i.i.d.\ processes. Furthermore, the assumption of 
infinite exchangeability is also critical, because the result can fail in the
finite setting. As a simple example, let $\Xcal = \{0,1\}$, $n=2$, and let $Q$ 
place equal mass on $(1,0)$ and on $(0,1)$ (and no mass on $(0,0)$ or
$(1,1)$). Then $Q$ is exchangeable, but clearly cannot be realized by mixing the
distributions of i.i.d.\ Bernoulli sequences. 

Lastly, we remark that the above discussion is by no means a comprehensive
treatment of the work on de Finetti's theorem, its extensions and applications,
or alternative proofs. For a broader perspective, see, for example,
\citet{aldous1985exchangeability, diaconis1987dozen, lauritzen1988extremal}.

\subsubsection{Special properties of i.i.d.\ sequences}

Before moving on to discuss our main results, we mention some additional
well-known results that apply to infinite i.i.d.\ sequences (but not to
infinitely exchangeable sequences more generally), so that we can compare to the 
weighted case later on. 

We begin by recalling the Hewitt--Savage zero-one law, which is also due to 
\citet{hewitt1955symmetric}. See also \citet{kingman1978uses}. 

\begin{theorem}[Hewitt--Savage zero-one law]\label{thm:01}
For any distribution \smash{$P \in \Pcal_\Xcal$}, it holds that
\[
P^\infty(A) \in\{0,1\}, \quad \textnormal{for all $A \in \Ecal_\infty$}, 
\]
where $\Ecal_\infty \subseteq \BXinf$ is the sub-$\sigma$-algebra of
exchangeable events. 
\end{theorem}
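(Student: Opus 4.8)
The plan is to follow the classical argument for the Hewitt--Savage zero-one law, which approximates an exchangeable event by a finite-dimensional cylinder event and then exploits a coordinate swap. Recall that an event $A \in \Ecal_\infty$ is exchangeable in the sense that $T_\sigma^{-1}(A) = A$ for every permutation $\sigma$ of $\mathbb{N}$ fixing all but finitely many indices, where $T_\sigma \colon \Xcal^\infty \to \Xcal^\infty$ denotes the induced coordinate map $T_\sigma(x)_i = x_{\sigma(i)}$. Fix such an $A$. First I would establish the approximation step: since $\BXinf$ is generated by the algebra of finite-dimensional cylinder sets (which holds because $\Xcal$ is standard Borel, as recorded in the excerpt), for each $n$ there is a cylinder event $A_n$ depending only on the first $n$ coordinates with $P^\infty(A \triangle A_n) \to 0$ as $n \to \infty$. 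This is the standard measure-theoretic fact that any event is approximable in symmetric difference by elements of a generating algebra.

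Next I would introduce the key permutation. Let $\sigma_n$ swap coordinate $i$ with coordinate $n+i$ for each $i \in [n]$ and fix all other coordinates, and set $A_n' = T_{\sigma_n}^{-1}(A_n)$, so that $A_n'$ depends only on coordinates $n+1,\dots,2n$. Two facts drive the argument. Since $P^\infty$ is invariant under $T_{\sigma_n}$ (product measures are permutation-invariant) and $A$ is exchangeable, we have $A \triangle A_n' = T_{\sigma_n}^{-1}(A \triangle A_n)$ and hence $P^\infty(A \triangle A_n') = P^\infty(A \triangle A_n) \to 0$. Moreover, because $A_n$ and $A_n'$ depend on disjoint blocks of coordinates, they are independent under the product measure, giving $P^\infty(A_n \cap A_n') = P^\infty(A_n)\, P^\infty(A_n')$.

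Finally I would pass to the limit. From $(A_n \cap A_n') \triangle A \subseteq (A_n \triangle A) \cup (A_n' \triangle A)$ we get both $P^\infty(A_n) \to P^\infty(A)$ and $P^\infty(A_n \cap A_n') \to P^\infty(A)$. Combining these with the independence identity yields $P^\infty(A) = P^\infty(A)^2$, whence $P^\infty(A) \in \{0,1\}$, as desired.

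I expect the main obstacle to be the rigorous bookkeeping rather than any conceptual difficulty: one must justify the cylinder approximation carefully, verify that $T_{\sigma_n}^{-1}$ commutes with symmetric differences and preserves $P^\infty$-measure, and confirm that the swapped cylinder $A_n'$ genuinely depends only on the second block of coordinates so that independence applies. These steps are routine but warrant attention; the conceptual heart---the swap that turns ``exchangeable and approximable'' into ``asymptotically independent of itself''---is the clean and essential idea.
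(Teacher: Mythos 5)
Your proposal is correct, but there is nothing in the paper to compare it against: the paper states this result purely as background (Theorem \ref{thm:01}), citing \citet{hewitt1955symmetric} and \citet{kingman1978uses}, and gives no proof of its own. Your argument is the standard modern proof of the Hewitt--Savage zero-one law: approximate $A \in \Ecal_\infty$ in $P^\infty$-measure by cylinder sets $A_n$ depending on the first $n$ coordinates, apply the block swap $\sigma_n : i \leftrightarrow n+i$ (under which $P^\infty$ is invariant and $T_{\sigma_n}^{-1}(A) = A$, which is indeed equivalent to the paper's pointwise definition of $\Ecal_\infty$, since $T_\sigma(A) \subseteq A$ for all finite $\sigma$ forces $T_\sigma(A) = A$), and use independence of the disjoint coordinate blocks to conclude $P^\infty(A) = P^\infty(A)^2$. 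All the steps you flag as bookkeeping do go through: the cylinder sets form a generating algebra (the paper's standard Borel assumption guarantees $\BXinf$ is the product $\sigma$-algebra), preimages commute with symmetric differences, and $A_n' = T_{\sigma_n}^{-1}(A_n)$ depends only on coordinates $n+1,\dots,2n$. It is worth noting that this route is genuinely different from the one in the cited source: Hewitt and Savage originally obtained the zero-one law via de Finetti's theorem and the extremality of i.i.d.\ measures among exchangeable measures, whereas your approximation-and-swap argument is elementary and self-contained, requiring neither de Finetti's theorem nor any convexity considerations---which is precisely what makes it the preferable proof to include if one were to be added.
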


The sub-$\sigma$-algebra $\Ecal_\infty \subseteq \BXinf$ referenced in the
theorem is defined as  
\begin{multline*}
\Ecal_\infty= \Big\{ A\in\BXinf : \textnormal{for all $x \in A$, $n \geq 1$, and 
  $\sigma \in \Scal_n$}, \\ \textnormal{it holds that
  $(x_{\sigma(1)},\dots,x_{\sigma(n)},x_{n+1},x_{n+2},\dots) \in A$}\Big\}. 
\end{multline*}
This is often called the \emph{exchangeable $\sigma$-algebra}. To give a
concrete example, the event ``we never observe the value zero'' is in
$\Ecal_\infty$, since we can express it as \smash{$A = \{x \in \Xcal^\infty: 
  \sum_{i=1}^\infty \one{x_i = 0} =  0\}$}, and this satisfies the permutation 
invariance condition in the last display.  

Next, we recall the strong law of large numbers,\footnote{To be precise, what   
  we state in Theorem \ref{thm:lln} is actually simply the application of the
  law of large numbers to indicator variables, as this special case is most
  pertinent to our study.}  
due to \citet{kolmogorov1930sur} (whereas weaker versions date back earlier to 
Bernoulli, Chebyshev, Markov, Borel, and others). For simplicity, we will drop
the specifier ``strong'' henceforth, and simply refer to this as the ``law of
large numbers''.     

\begin{theorem}[Law of large numbers]\label{thm:lln}
For any distribution \smash{$P \in \Pcal_\Xcal$}, given \smash{$X_1, X_2,
  \dots \iidsim P$}, write
\[
\widehat{P}_n= \frac{1}{n}\sum_{i=1}^n \delta_{X_i}
\]
to denote the empirical distribution of the first $n$ random variables. Then it
holds that   
\[
\widehat{P}_n(A) \asto P(A), \quad \textnormal{as $n \to \infty$, for all $A \in
  \BX$}. 
\]
\end{theorem}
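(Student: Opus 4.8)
The plan is to reduce the claim to the classical Kolmogorov strong law for i.i.d.\ integrable random variables, specialized to indicator variables---exactly as the footnote anticipates. Fix an arbitrary set $A \in \BX$. For each $i \geq 1$, define $Y_i = \One{X_i \in A}$; since $A$ is a fixed Borel set, the map $x \mapsto \One{x \in A}$ is measurable, so $Y_i$ is a measurable function of $X_i$. As $X_1, X_2, \dots \iidsim P$, it follows that $Y_1, Y_2, \dots$ are i.i.d., each a $\mathrm{Bernoulli}(P(A))$ variable, with $\EE{Y_i} = P(A)$.

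The key observation is that the empirical measure evaluated at $A$ is exactly the sample average of these indicators. Unwinding the definition $\widehat{P}_n = \frac{1}{n}\sum_{i=1}^n \delta_{X_i}$ and using $\delta_{X_i}(A) = \One{X_i \in A} = Y_i$, we obtain
\[
\widehat{P}_n(A) = \frac{1}{n}\sum_{i=1}^n Y_i.
\]
Because each $Y_i$ takes values in $\{0,1\}$, the sequence is bounded and hence trivially satisfies the integrability hypothesis of the strong law. Applying Kolmogorov's strong law to the i.i.d.\ sequence $(Y_i)_{i \geq 1}$ gives $\frac{1}{n}\sum_{i=1}^n Y_i \asto \EE{Y_1} = P(A)$, which is precisely the asserted convergence $\widehat{P}_n(A) \asto P(A)$.

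I do not expect any genuine obstacle, as the result is an immediate corollary of the classical strong law. The one point deserving care is the order of quantifiers: the conclusion is that for each fixed $A \in \BX$, convergence holds on an event of full probability---an event whose exceptional null set may depend on $A$---rather than a single almost-sure event on which convergence holds simultaneously across all $A \in \BX$. This pointwise-in-$A$ reading is exactly what the footnote clarifies, and it suffices for everything that follows.
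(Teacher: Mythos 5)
Your proof is correct and matches exactly what the paper intends: the paper states this result without proof as a classical fact, and its footnote explicitly identifies it as ``simply the application of the law of large numbers to indicator variables,'' which is precisely your reduction to Kolmogorov's strong law via $Y_i = \One{X_i \in A}$. Your closing remark on the order of quantifiers (the null set may depend on $A$) is also the correct reading of the statement and of how the paper later uses it.
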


This is particularly interesting when combined with de Finetti's theorem (recall
Theorem \ref{thm:definetti}). For any exchangeable distribution $Q$ on
$\Xcal^\infty$, by de Finetti's theorem, we can express \smash{$Q =
  (P^\infty)_\mu$} for a distribution $\mu$ on $\Pcal_\Xcal$; then by the law of 
large numbers (which we apply after conditioning on the draw $P \sim \mu$), the
unknown (random) distribution $P$ can be recovered from the observed sequence $X
\in \Xcal^\infty$ by taking the limit of its empirical distribution.  

\subsection{Mixtures in the weighted case, and weighted i.i.d.\ sequences}
\label{sec:background_weighted}

The next result extends Proposition \ref{prop:exch_mix} to weighted exchangeable 
distributions. We omit its proof since the result follows immediately from the
definition of weighted exchangeability.  

\begin{proposition}\label{prop:wtd_exch_mix}
For any $\lambda\in\Lambda^n$ (or $\Lambda^\infty$), any mixture of
$\lambda$-weighted exchangeable distributions on $\Xcal^n$ (or $\Xcal^\infty$)
is itself a $\lambda$-weighted exchangeable distribution on $\Xcal^n$ (or
$\Xcal^\infty$).  
\end{proposition}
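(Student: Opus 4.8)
The plan is to exploit the fact that the reweighting map sending $Q$ to $\widebar Q$ is linear in $Q$, and to combine this with the already-established preservation of exchangeability under mixtures (Proposition \ref{prop:exch_mix}). I would treat the finite case first and then deduce the infinite case by passing to marginals.

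Fix $\lambda = (\lambda_1,\dots,\lambda_n) \in \Lambda^n$ and write $w(x_1,\dots,x_n) = \prod_{i=1}^n \lambda_i(x_i)^{-1}$ for the associated strictly positive measurable weight. Let $\{Q_\theta\}_{\theta\in\Theta}$ be a family of $\lambda$-weighted exchangeable measures on $\Xcal^n$, and let $Q = \int Q_\theta\,\d\nu(\theta)$ denote their mixture against a mixing measure $\nu$. The crucial point is that the reweighting defining $\widebar Q$ is integration against the single fixed kernel $w$, independent of $\theta$, precisely because every $Q_\theta$ is weighted by the same $\lambda$. First I would check that reweighting commutes with mixing: for every $A \in \BXn$,
\[
\widebar Q(A) = \int_A w(x)\,\d Q(x) = \int \left( \int_A w(x)\,\d Q_\theta(x) \right) \d\nu(\theta) = \int \widebar{Q_\theta}(A)\,\d\nu(\theta),
\]
where the second equality substitutes $Q = \int Q_\theta\,\d\nu(\theta)$ and interchanges the order of integration by Tonelli's theorem, which applies because $w$ is nonnegative and each $Q_\theta$ is a nonnegative measure. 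In words, $\widebar Q$ is the mixture of the measures $\widebar{Q_\theta}$ against $\nu$.

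By Definition \ref{def:wtd_exch_finite}, each $\widebar{Q_\theta}$ is an exchangeable measure on $\Xcal^n$. Hence $\widebar Q$ is a mixture of exchangeable measures, and Proposition \ref{prop:exch_mix} immediately gives that $\widebar Q$ is itself exchangeable. By Definition \ref{def:wtd_exch_finite} again, this says exactly that $Q$ is $\lambda$-weighted exchangeable, settling the finite case.

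For the infinite case, let $\lambda \in \Lambda^\infty$ and let $Q = \int Q_\theta\,\d\nu(\theta)$ be a mixture of $\lambda$-weighted exchangeable measures on $\Xcal^\infty$. Since marginalization is linear, for each $n$ the $n$-dimensional marginal of the mixture equals the mixture of the marginals, $Q_n = \int (Q_\theta)_n\,\d\nu(\theta)$; each $(Q_\theta)_n$ is $(\lambda_1,\dots,\lambda_n)$-weighted exchangeable by Definition \ref{def:wtd_exch}, so the finite case just proved shows $Q_n$ is $(\lambda_1,\dots,\lambda_n)$-weighted exchangeable. As this holds for all $n$, Definition \ref{def:wtd_exch} yields that $Q$ is $\lambda$-weighted exchangeable. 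The only step requiring any care is the interchange of integration in the displayed equation, and this is routine given nonnegativity; accordingly I expect no genuine obstacle, consistent with the result being essentially a restatement of the linearity of the reweighting operation.
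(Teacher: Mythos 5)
Your proof is correct and is precisely the argument the paper has in mind: the paper omits the proof on the grounds that it ``follows immediately from the definition,'' and your write-up is that immediate verification spelled out—reweighting commutes with mixing because the weight kernel $w$ is fixed across the mixture (your Tonelli step, which is the routine indicator--simple--monotone-convergence extension of the defining identity $Q(A)=\int Q_\theta(A)\,\d\nu(\theta)$), so $\widebar{Q}$ is a mixture of the exchangeable measures $\widebar{Q_\theta}$ and Proposition \ref{prop:exch_mix} applies, with the infinite case reduced to the finite case by noting that marginals of a mixture are mixtures of marginals. No gaps; the only implicit hypothesis, measurability of $\theta\mapsto Q_\theta(A)$, is part of what it means for $Q$ to be a mixture in the first place.
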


We next introduce some notation that will help us concisely represent certain
product distributions. For \smash{$P \in \Mcal_\Xcal$} and $\lambda \in
\Lambda$, we define the distribution \smash{$P \circ \lambda \in \Pcal_\Xcal$}
by 
\[
(P \circ \lambda)(A) = \frac{\int_A \lambda(x) \, \d{P}(x)}
{\int_\Xcal \lambda(x) \, \d{P}(x)}, \quad \textnormal{for $A \in \BX$}. 
\]
Here, we are effectively ``reweighting'' the measure $P$ according
  to the weight function $\lambda$.
Note that $P \circ \lambda$ is well-defined for all $P$ in the set
\smash{$\Mcal_\Xcal(\lambda) = \{ P\in\Mcal_\Xcal : 0 < \int_\Xcal \lambda(x) \,
  \d{P}(x) < \infty \}$}.  For finite or infinite sequences of weight functions,
we will overload this notation as follows: when $\lambda =
(\lambda_1,\dots,\lambda_n) \in \Lambda^n$, we use $P \circ \lambda$ to denote
the distribution on $\Xcal^n$ defined by   
\[
P\circ\lambda = (P\circ\lambda_1) \times \dots \times (P\circ\lambda_n),
\]
and when $\lambda = (\lambda_1,\lambda_2\dots) \in \Lambda^\infty$, we
use $P \circ \lambda$ to denote the distribution on $\Xcal^\infty$ defined by 
\[
P\circ\lambda = (P\circ\lambda_1) \times (P\circ\lambda_2) \times \dots,
\]
and either of these mixtures are well-defined for all $P$ in the set
(overloading notation once more) \smash{$\Mcal_\Xcal(\lambda) =   
  \{ P\in\Mcal_\Xcal : 0 < \int_\Xcal \lambda_i(x) \, \d{P}(x) < \infty, \;
  \textnormal{for all $i$} \}$}. 

With this notation in hand, it is worth noting that for a distribution $Q$ on
$\Xcal^n$ or $\Xcal^\infty$ and any $\lambda$ in $\Lambda^n$ or
$\Lambda^\infty$,   
\begin{multline}
\label{eq:weighted_iid}
\textnormal{$Q$ is both a product distribution and $\lambda$-weighted
  exchangeable $\iff$} \\ \textnormal{$Q = P \circ \lambda$ for some $P \in 
  \Mcal_\Xcal$}. 
\end{multline}
Each direction is simple to check using Definition \ref{def:wtd_exch_finite} or
\ref{def:wtd_exch}. We will use the term \emph{$\lambda$-weighted i.i.d.}\ to   
refer to any distribution $Q$ satisfying \eqref{eq:weighted_iid}. 

Finally, as in the original unweighted case, Proposition \ref{prop:wtd_exch_mix}
has a nice interpretation for weighted i.i.d.\ sequences.

\begin{corollary}\label{cor:wtd_mix_of_iid}
For any $\lambda\in\Lambda^n$ and distribution $\mu$ on
\smash{$\Mcal_\Xcal(\lambda)$}, the distribution $Q$ on $\Xcal^n$ 
defined by
\[
\textnormal{sample $P \sim \mu$, then draw $X_1,\dots,X_n \sim P \circ
  \lambda$}   
\]
is $\lambda$-weighted exchangeable. Similarly, for any
$\lambda\in\Lambda^\infty$ and distribution $\mu$ on
\smash{$\Mcal_\Xcal(\lambda)$}, the distribution $Q$ on $\Xcal^\infty$ defined  
by   
\[
\textnormal{sample $P \sim \mu$, then draw $X_1,X_2,\dots \sim P \circ
  \lambda$} 
\]
is $\lambda$-weighted exchangeable. 
\end{corollary}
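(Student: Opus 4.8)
The plan is to derive the corollary directly by combining the characterization in \eqref{eq:weighted_iid} with the mixture-stability result of Proposition \ref{prop:wtd_exch_mix}. The finite and infinite cases are handled by an identical argument, so I would treat them in parallel, writing $\lambda \in \Lambda^n$ or $\Lambda^\infty$ without distinction and referring to $\BXn$ or $\BXinf$ as appropriate.

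First I would fix $P \in \Mcal_\Xcal(\lambda)$ and observe that, by construction, $P \circ \lambda$ is a product distribution---namely $(P\circ\lambda_1) \times \cdots \times (P\circ\lambda_n)$ in the finite case, and the analogous countable product in the infinite case. Invoking the ``$\Leftarrow$'' direction of the equivalence \eqref{eq:weighted_iid}, which asserts that any distribution of the form $P \circ \lambda$ is both a product distribution and $\lambda$-weighted exchangeable, I conclude that each $P \circ \lambda$ is $\lambda$-weighted exchangeable (indeed, it is $\lambda$-weighted i.i.d.). Next I would note that the distribution $Q$ described in the statement is, by definition of the two-stage sampling scheme, precisely the $\mu$-mixture of these distributions,
\[
Q(A) = \Ep{P \sim \mu}{(P \circ \lambda)(A)}, \quad \textnormal{for $A \in \BXn$ (or $\BXinf$)}.
\]
Since each $P \circ \lambda$ is $\lambda$-weighted exchangeable and $Q$ is a mixture of such distributions with mixing measure $\mu$, Proposition \ref{prop:wtd_exch_mix} immediately yields that $Q$ is $\lambda$-weighted exchangeable, completing the argument.

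There is essentially no deep obstacle here; the only point requiring care---and the step I would be most careful about---is purely measure-theoretic, namely confirming that the displayed integral is well-defined. The hypothesis that $\mu$ is supported on $\Mcal_\Xcal(\lambda)$ guarantees that each normalizing constant $\int_\Xcal \lambda_i(x) \, \d{P}(x)$ lies in $(0,\infty)$, so that every $P \circ \lambda$ is a genuine element of $\Pcal_\Xcal$. It then remains to verify that $P \mapsto (P \circ \lambda)(A)$ is measurable with respect to the $\sigma$-algebra on $\Mcal_\Xcal$ generated by the evaluation maps $P \mapsto P(A)$, so that $Q$ is itself a well-defined distribution to which Proposition \ref{prop:wtd_exch_mix} applies. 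This reduces to the measurability of $P \mapsto \int_A \lambda_i(x) \, \d{P}(x)$ and of the resulting normalizing ratios, which follows by approximating each $\lambda_i$ from below by simple functions and passing to the limit.
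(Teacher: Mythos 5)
Your proposal is correct and follows exactly the route the paper intends: the paper states this corollary as an immediate consequence of Proposition \ref{prop:wtd_exch_mix} together with the characterization \eqref{eq:weighted_iid} (each $P\circ\lambda$ is $\lambda$-weighted exchangeable, and a mixture of such distributions remains $\lambda$-weighted exchangeable), which is precisely your argument. Your added remark on the measurability of $P \mapsto (P\circ\lambda)(A)$ is a reasonable extra precaution, though the paper treats this point as implicit.
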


For the infinite setting, we will use $Q =  (P\circ\lambda)_\mu$ to denote this  
mixture, that is, for any $\lambda\in\Lambda^\infty$ and any distribution $\mu$ 
on \smash{$\Mcal_\Xcal(\lambda)$}, the distribution \smash{$Q = ( P\circ\lambda  
  )_\mu$} is the $\lambda$-weighted exchangeable distribution given by   
\[
Q(A) = \Ep{P \sim \mu}{(P\circ\lambda)(A)}, \quad \textnormal{for $A \in
  \BXinf$}, 
\]
or equivalently, in less compact notation,
\[
Q(A_1 \times A_2\times \cdots) = \int \prod_{i=1}^\infty (P\circ\lambda_i)(A_i)
\, \d\mu(P), \quad \textnormal{for $A_1,A_2,\dots \in \BX$}.  
\]

\section{Main results}

We now present the central questions and main findings of this work.  In short,
we are interested in examining whether de Finetti's theorem for infinitely
exchangeable sequences can be generalized to the setting of infinite weighted
exchangeability. Along the way, we will also consider whether results for
i.i.d.\ sequences---namely, the Hewitt--Savage zero-one law and the law of large
numbers---can be extended to the weighted case, as well.

\subsection{Framework}

We now define the properties we wish to examine. These are all weighted 
generalizations of the properties in the ordinary unweighted setting, described
above.  

\paragraph{The weighted de Finetti property.} 

We say that a sequence $\lambda\in\Lambda^\infty$ satisfies the \emph{weighted
  de Finetti property} if, for any $\lambda$-weighted exchangeable distribution
$Q$ on $\Xcal^\infty$, there is a distribution $\mu$ on
\smash{$\Mcal_\Xcal(\lambda)$} such that $Q = (P\circ\lambda)_\mu$, or
equivalently, 
\[
Q(A) = \Ep{P \sim \mu}{(P\circ\lambda)(A)}, \quad \textnormal{for all $A \in \BXinf$}.  
\]
In other words, this says that for any $\lambda$-weighted exchangeable 
distribution $Q$, we can represent it by mixing the distributions of
$\lambda$-weighted i.i.d.\ sequences. For our main results that follow, it will
be useful to denote this condition compactly. To this end, we write:
\begin{multline*}
\Lambda_{\dF} =  \Big\{ \lambda\in\Lambda^\infty : 
\textnormal{for all $\lambda$-weighted exchangeable distributions $Q$ on 
  $\Xcal^\infty$}, \\
\textnormal{there exists a distribution $\mu$ on $\Mcal_{\Xcal}(\lambda)$ 
  such that $Q = (P\circ\lambda)_\mu$} \Big\}.
\end{multline*}

\paragraph{The weighted zero-one law.} 

We say that a sequence $\lambda\in\Lambda^\infty$ satisfies the \emph{weighted
  zero-one law} if, for any \smash{$P\in \Mcal_\Xcal(\lambda)$}, each event
$A\in\Ecal_\infty$ is assigned either probability zero or one by
$P\circ\lambda$. To represent this condition compactly, we write:
\[
\Lambda_{\01} = \Big\{ \lambda\in\Lambda^\infty : \textnormal{for all
  $P\in\Mcal_\Xcal(\lambda)$ and $A\in\Ecal_\infty$, it holds that 
  $(P\circ\lambda)(A) \in\{0,1\}$} \Big\}. 
\] 

\paragraph {The weighted law of large numbers.}

We say that a sequence $\lambda\in\Lambda^\infty$ satisfies the 
\emph{weighted law of large numbers} if, for any \smash{$P \in
  \Mcal_\Xcal(\lambda)$}, the following holds: for \smash{$(X_1,X_2,\dots) \sim 
  P\circ\lambda$}, 
\[
\widetilde{P}_{n,i}(A) \asto (P\circ\lambda_i)(A), \quad \textnormal{as $n \to 
  \infty$, for all $i \geq 1$ and $A\in\BX$},
\]
where \smash{$\widetilde{P}_{n,i}$} is a certain weighted empirical distribution
of $X_1,\dots,X_n$, defined by
\begin{multline}\label{eq:wtd_empirical}
\widetilde{P}_{n,i} = \sum_{j=1}^n (w_{n,i}(X_1,\dots,X_n))_j \cdot
\delta_{X_j}, \\ \textnormal{where} \;\, (w_{n,i}(x_1,\dots,x_n))_j = \frac{  
\sum_{\sigma\in\Scal_n : \sigma(i)=j} \prod_{k=1}^n \lambda_k(x_{\sigma(k)})}  
{\sum_{\sigma\in\Scal_n} \prod_{k=1}^n \lambda_k(x_{\sigma(k)})}, \;\,
j=1,\dots,n.  
\end{multline}
 (Later on, in Proposition \ref{prop:Xi_condition_on_Ecalm}, we will
  see that \smash{$\widetilde{P}_{n,i}$} determines the distribution of the
  random variable $X_i$, if we condition on observing the \emph{unordered}  
  collection of values $X_1,\dots,X_n$. To compare this to the unweighted case, 
  observe that we would just have \smash{$(w_{n,i}(X_1,\dots,X_n))_j \equiv
    \frac{1}{n}$} for each $j$, so in this case we can view $X_i$ as a uniform
  random draw from $X_1,\dots,X_n$ after conditioning on this list.)
To represent the above condition compactly, we write:
\begin{multline*}
\Lambda_{\LLN} = \Big\{ \lambda\in\Lambda^\infty : 
\textnormal{for all $P \in \Mcal_\Xcal(\lambda)$, $i \geq 1$, and $A \in \BX$,
  if $X\sim P\circ\lambda$}, \\ \textnormal{then $\widetilde{P}_{n,i}(A) \asto  
  (P\circ\lambda_i)(A)$, for $\widetilde{P}_{n,i}$ as defined in
  \eqref{eq:wtd_empirical}} \Big\}. 
\end{multline*}

\begin{example}\label{example:binary}

To build intuition for these various sets, here we pause to give a simple
example. Consider the binary setting, $\Xcal = \{0,1\}$, and define
$\lambda \in \Lambda^\infty$ as the sequence of functions 
\[
\lambda_i(0) = 1, \quad \lambda_i(1) = 2^{-i},  \quad i \geq 1. 
\]
A straightforward calculation verifies that this sequence does not belong to any
of the three sets \smash{$\Lambda_{\dF}, \Lambda_{\01}, \Lambda_{\LLN}$} defined
above. For instance, to see that \smash{$\lambda \not \in\Lambda_{\dF}$},
consider the following distribution $Q$ on $\Xcal^\infty$: defining
$e_i = (0,\dots,0,1,0,0,\dots) \in \Xcal^\infty$ as the sequence with a 1 in
position $i$ and 0s elsewhere, let  
\[
Q(\{e_i\}) = 2^{-i}, \quad i \geq 1,
\] 
and $Q(\{x\})=0$ for all other $x \in \Xcal^\infty$. We can verify that $Q$ is
$\lambda$-weighted exchangeable, but since the sequence $X = (X_1,X_2,\dots)$
must contain a single 1 almost surely under $Q$, we can see that $Q$ cannot be
written as a mixture of $\lambda$-weighted i.i.d.\ distributions.\end{example}

\subsection{Main theorems}

Our first main result in this paper establishes a connection between these three 
weighted properties. Its proof will be covered in Section \ref{sec:proofs}.   

\begin{theorem}[Embedding of conditions]\label{thm:main}
It holds that
\[
\Lambda_{\dF} \subseteq \Lambda_{\01} \subseteq \Lambda_{\LLN}.
\] 
\end{theorem}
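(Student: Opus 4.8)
The plan is to prove the two inclusions separately, using a single engine: a conditional–expectation formula for the weighted empirical measure $\widetilde{P}_{n,i}$. First I would show that for \emph{any} $\lambda$-weighted exchangeable distribution $Q$ on $\Xcal^\infty$, every $i\ge 1$, and every $B\in\BX$,
\[
\widetilde{P}_{n,i}(B) = \EEst{\One{X_i\in B}}{\Ecal_n}\quad(Q\text{-a.s.}),\qquad n\ge i,
\]
where $\Ecal_n\subseteq\BXinf$ is the sub-$\sigma$-algebra of events invariant to permutations of the first $n$ coordinates (this is Proposition \ref{prop:Xi_condition_on_Ecalm}; it follows by checking that, conditional on the unordered values $X_1,\dots,X_n$ and on the tail, weighted exchangeability places $X_i$ at position $j$ with exactly the probability $w_{n,i}$ from \eqref{eq:wtd_empirical}). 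Since $\Ecal_1\supseteq\Ecal_2\supseteq\cdots$ with $\bigcap_n\Ecal_n=\Ecal_\infty$, the backward martingale convergence theorem yields $\widetilde{P}_{n,i}(B)\asto\EEst{\One{X_i\in B}}{\Ecal_\infty}$ $(Q\text{-a.s.})$. The inclusion $\Lambda_{\01}\subseteq\Lambda_{\LLN}$ is then immediate: fixing $\lambda\in\Lambda_{\01}$ and $P\in\Mcal_\Xcal(\lambda)$ and applying this to $Q=P\circ\lambda$ (which is $\lambda$-weighted exchangeable), the weighted zero-one law makes $\Ecal_\infty$ trivial under $P\circ\lambda$, so the $\Ecal_\infty$-measurable limit equals its mean $\EE{\One{X_i\in B}}=(P\circ\lambda_i)(B)$ almost surely, i.e.\ $\lambda\in\Lambda_{\LLN}$.

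For $\Lambda_{\dF}\subseteq\Lambda_{\01}$ I would argue by contradiction: fix $\lambda\in\Lambda_{\dF}$ and suppose some $A\in\Ecal_\infty$ has $\alpha:=(P\circ\lambda)(A)\in(0,1)$. The first step is to show that conditioning on an exchangeable event preserves weighted exchangeability, so that $Q_A:=(P\circ\lambda)(\,\cdot\mid A)$ and $Q_{A^c}$ are again $\lambda$-weighted exchangeable. The mechanism is that reweighting only the first $n$ coordinates turns $P\circ\lambda$ into a measure that is invariant under $\Scal_n$ acting on those coordinates (this is exactly finite weighted exchangeability of the $n$-marginal); because $A\in\Ecal_\infty$ is itself invariant under such permutations, intersecting with $A$ preserves this symmetry, and marginalizing shows $\widebar{(Q_A)_n}$ is exchangeable for every $n$. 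Hence $P\circ\lambda=\alpha\,Q_A+(1-\alpha)\,Q_{A^c}$ exhibits $P\circ\lambda$ as a nontrivial mixture of two \emph{distinct} weighted exchangeable distributions (distinct since $Q_A(A)=1\neq0=Q_{A^c}(A)$).

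Next I would invoke the de Finetti property. Writing $Q_A=(P'\circ\lambda)_{\mu_A}$ and $Q_{A^c}=(P'\circ\lambda)_{\mu_{A^c}}$, the identities $Q_A(A)=1$ and $Q_{A^c}(A)=0$ force $(P'\circ\lambda)(A)=1$ for $\mu_A$-a.e.\ $P'$ and $(P'\circ\lambda)(A)=0$ for $\mu_{A^c}$-a.e.\ $P'$. Therefore $P\circ\lambda=(P'\circ\lambda)_{\nu}$ with $\nu=\alpha\mu_A+(1-\alpha)\mu_{A^c}$ supported on weighted-i.i.d.\ laws whose $A$-measure lies in $\{0,1\}$, whereas the trivial representation $P\circ\lambda=(P'\circ\lambda)_{\delta_P}$ is concentrated on a single weighted-i.i.d.\ law whose $A$-measure is $\alpha\notin\{0,1\}$. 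A \emph{uniqueness} statement for the weighted de Finetti representation would make these two representations coincide, an immediate contradiction—so the heart of this inclusion is precisely to establish that uniqueness.

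This is the step I expect to be the main obstacle, and it is genuinely harder than in the unweighted case. There uniqueness is free, since the ordinary law of large numbers recovers the mixing measure as the law of $\lim_n\widehat{P}_n$; here the analogous recovery is exactly property $\Lambda_{\LLN}$, which sits \emph{downstream} of what we are proving and so cannot be assumed—indeed the backward-martingale limit $\EEst{\One{X_i\in\cdot}}{\Ecal_\infty}$ need not equal $P'\circ\lambda_i$ precisely when the zero-one law fails. I would instead prove uniqueness intrinsically: since $P\circ\lambda$ is a product, comparing rectangle probabilities shows that $\nu$ must render the coordinate marginals $D_i:=P'\circ\lambda_i$ mutually independent, and a short calculation (using that $A$ is exchangeable) shows that $R\mapsto R(A)$ is invariant under finite permutations of the $D_i$. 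The goal is then to upgrade this symmetry, under the independent law $\nu$, to a zero-one conclusion $\nu\{R(A)=1\}\in\{0,1\}$, contradicting $\nu\{R(A)=1\}=\alpha$. Converting ``permutation-symmetric'' into ``zero-one'' for the independent but non-identically-distributed sequence $(D_i)$—where the common-parameter structure $D_i=P'\circ\lambda_i$ must be exploited—is the delicate technical core on which the first inclusion hinges.
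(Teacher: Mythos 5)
Your proof of $\Lambda_{\01}\subseteq\Lambda_{\LLN}$ is correct and is essentially the paper's own argument: Proposition \ref{prop:Xi_condition_on_Ecalm} identifies $\widetilde{P}_{n,i}(B)$ with $\Ppst{P\circ\lambda}{X_i\in B}{\Ecal_n}$, Levy's downward theorem gives convergence to $\Ppst{P\circ\lambda}{X_i\in B}{\Ecal_\infty}$, and triviality of $\Ecal_\infty$ under the weighted zero-one law pins the limit at its mean $(P\circ\lambda_i)(B)$. Your setup for $\Lambda_{\dF}\subseteq\Lambda_{\01}$ also matches the paper exactly: condition on the exchangeable event and its complement, verify via Proposition \ref{prop:wtd_exch_equiv} that this preserves $\lambda$-weighted exchangeability, apply the de Finetti property to each piece, and form $\nu=\alpha\mu_A+(1-\alpha)\mu_{A^c}$. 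The gap is in how the contradiction is closed.

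The statement you need is what the paper isolates as Lemma \ref{lem:mix_unique}: if $(P'\circ\lambda)_\nu=P\circ\lambda$, then $(P'\circ\lambda)(A)\aseq(P\circ\lambda)(A)=\alpha$ under $P'\sim\nu$, i.e.\ \emph{constancy} of $R(A):=(P'\circ\lambda)(A)$, which immediately contradicts $R(A)$ being $\{0,1\}$-valued with mean $\alpha\in(0,1)$. You correctly identify this (weaker-than-uniqueness) degeneracy statement as the crux, but you do not prove it, and the route you sketch would fail at both of its steps. First, comparing rectangle probabilities does \emph{not} make the $D_i=P'\circ\lambda_i$ mutually independent under $\nu$: it yields only the factorization of mixed first moments
\[
\Ep{\nu}{\prod_{i\in S} D_i(B_i)} = \prod_{i\in S}\Ep{\nu}{D_i(B_i)},
\]
with each coordinate index appearing at most once; since every $D_i$ is a deterministic function of the single draw $P'$, the $D_i$ are in general strongly dependent. (In the unweighted case this identity already suffices, because distinct coordinates carry the \emph{same} random measure $P'$, so taking $B_i=B_j=B$ gives $\Ep{\nu}{P'(B)^2}=\Ep{\nu}{P'(B)}^2$ and hence zero variance; in the weighted case $\lambda_i\neq\lambda_j$, so one never obtains the square of a single random variable---this is precisely the obstruction.) Second, the proposed upgrade from ``permutation-symmetric'' to ``zero-one'' is unfounded and, on the natural reading, circular: conditional on $P'$, asking whether $(D_1\times D_2\times\cdots)(A)\in\{0,1\}$ for the symmetric event $A$ is exactly asking whether the weighted zero-one law holds at $P'$, which is the statement being proved and whose failure you have assumed; and no Hewitt--Savage-type law holds for independent non-identically-distributed sequences in general (Example \ref{example:binary} is a counterexample).

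What actually closes the gap in the paper is a second-moment argument tailored to the weighted setting: for $i\neq j$, introduce the common reweighting $\lambda_{ij}=\min\{\lambda_i,\lambda_j\}$ and tilt the mixing measure by a density proportional to $\frac{\int_\Xcal\lambda_{ij}(x)\,\d P'(x)}{\int_\Xcal\lambda_i(x)\,\d P'(x)}\cdot\frac{\int_\Xcal\lambda_{ij}(x)\,\d P'(x)}{\int_\Xcal\lambda_j(x)\,\d P'(x)}$. Under the tilted measure, the rectangle identity at coordinates $(i,j)$ becomes a genuine second-moment identity for $(P'\circ\lambda_{ij})(A)$, forcing its variance to vanish, and mutual absolute continuity of the tilt transfers the conclusion back to $\nu$; the resulting constancy of $R(A)$ then delivers the contradiction. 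Some such argument is indispensable here, so as it stands your proof of the inclusion $\Lambda_{\dF}\subseteq\Lambda_{\01}$ is incomplete.
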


In other words, the above theorem says that for any $\lambda\in\Lambda^\infty$,
the weighted de Finetti property implies the weighted zero-one law, which in turn
implies the weighted law of large numbers.  

Our next main results pertain to necessary and sufficient conditions for 
$\lambda\in\Lambda^\infty$ to lie in these sets. Their proofs are also covered
in Section \ref{sec:proofs}. 

\begin{theorem}[Necessary condition]\label{thm:nec_condition}
If \smash{$\lambda\in\Lambda_{\LLN}$}, then $\lambda$ satisfies 
\begin{multline}\label{eq:nec_condition}
\sum_{i=1}^\infty \min\{(P\circ\lambda_i)(A) ,
  (P\circ\lambda_i)(A^c)\} = \infty, \\ \textnormal{for all
  $P\in\Mcal_\Xcal(\lambda)$, $A\in\BX$ with $P(A),P(A^c)>0$}. 
\end{multline}
\end{theorem}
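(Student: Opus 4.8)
The plan is to prove the contrapositive. I will assume that some $P\in\Mcal_\Xcal(\lambda)$ and $A\in\BX$ with $P(A),P(A^c)>0$ violate \eqref{eq:nec_condition}, and deduce that $\lambda\notin\Lambda_{\LLN}$ by exhibiting a failure of the weighted law of large numbers for this very pair. Throughout, draw $X=(X_1,X_2,\dots)\sim P\circ\lambda$ and set $Y_j=\one{X_j\in A}$ and $p_j=(P\circ\lambda_j)(A)$. Since the coordinates of $P\circ\lambda$ are independent, the $Y_j$ are independent with $Y_j\sim\mathrm{Bernoulli}(p_j)$; moreover $p_j\in(0,1)$ because $\lambda_j>0$ and $P(A),P(A^c)>0$, so each $p_j(1-p_j)>0$.

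The key device is the random variable $\xi=\sum_{j=1}^\infty (Y_j-p_j)$. First I would check it is well defined: because $\mathrm{Var}(Y_j)=p_j(1-p_j)\le\min\{p_j,1-p_j\}$, the failure of \eqref{eq:nec_condition} gives $\sum_j\mathrm{Var}(Y_j)<\infty$, so by Kolmogorov's one-series theorem the series converges almost surely and in $L^2$, with $\EE{\xi}=0$ and $\mathrm{Var}(\xi)=\sum_j p_j(1-p_j)>0$. Defining $\xi$ pathwise as the $\limsup$ of the partial sums $\sum_{j\le n}(\one{x_j\in A}-p_j)$ makes it a Borel function on $\Xcal^\infty$; crucially, any $\sigma\in\Scal_m$ leaves $\sum_{j\le n}\one{x_j\in A}$ unchanged for every $n\ge m$, so $\xi$ is invariant under all finite permutations and hence is $\Ecal_\infty$-measurable. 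A direct second-moment computation, justified by $L^2$ convergence and independence, then gives the correlation I want: $\mathrm{Cov}(\xi,Y_i)=\sum_j\mathrm{Cov}(Y_j,Y_i)=\mathrm{Var}(Y_i)=p_i(1-p_i)>0$ for every $i$.

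Next I would connect $\xi$ to the weighted empirical distribution. Let $\Ecal_n\subseteq\BXinf$ be the sub-$\sigma$-algebra of events invariant under permutations in $\Scal_n$ of the first $n$ coordinates. Under $P\circ\lambda$ the conditional law of the ordering of $X_1,\dots,X_n$ given their unordered values assigns ordering $\sigma$ probability proportional to $\prod_k\lambda_k(X_{\sigma(k)})$ (this is the content of Proposition \ref{prop:Xi_condition_on_Ecalm}, which follows because $Q_n$ has density $\propto\prod_k\lambda_k(x_k)$ with respect to the symmetric reference measure $P^n$), so the weights in \eqref{eq:wtd_empirical} are exactly those of a conditional expectation: $\widetilde{P}_{n,i}(A)=\EEst{\one{X_i\in A}}{\Ecal_n}$ for $n\ge i$. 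Since $\Ecal_\infty\subseteq\Ecal_n$, the variable $\xi$ is $\Ecal_n$-measurable, and the pull-out property yields $\EE{\widetilde{P}_{n,i}(A)\,\xi}=\EE{\EEst{Y_i\xi}{\Ecal_n}}=\EE{Y_i\xi}=p_i(1-p_i)$ for every $n\ge i$.

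Finally I would derive the contradiction. If the weighted law of large numbers held for this $(P,A)$, then $\widetilde{P}_{n,i}(A)\asto p_i$; since $\widetilde{P}_{n,i}(A)\in[0,1]$ and $\xi\in L^1$, dominated convergence (with dominating function $|\xi|$) would force $\EE{\widetilde{P}_{n,i}(A)\,\xi}\to p_i\,\EE{\xi}=0$, contradicting $\EE{\widetilde{P}_{n,i}(A)\,\xi}=p_i(1-p_i)>0$ for all $n$. Hence the weighted law of large numbers fails for $(P,A)$, so $\lambda\notin\Lambda_{\LLN}$, completing the contrapositive. I expect the main subtlety to be the two structural facts that make the argument run—that $\xi$ is genuinely $\Ecal_\infty$-measurable (permutation invariance of the partial sums, not mere finite-change invariance) and that $\widetilde{P}_{n,i}(A)$ equals $\EEst{\one{X_i\in A}}{\Ecal_n}$—after which the probabilistic core reduces to one covariance computation and a single application of dominated convergence.
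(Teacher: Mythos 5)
Your proof is correct, but it takes a genuinely different route from the paper's. The paper argues via a two-measure singularity argument: it tilts $P_0$ on $A$ to build a second measure $P_1\in\Mcal_\Xcal(\lambda)$, observes that if the weighted LLN held then the almost-sure limit of $\widetilde{P}_{n,1}(A)$ would identify which of the two product measures generated the data, forcing $\dtv(P_0\circ\lambda,P_1\circ\lambda)=1$; a product-measure total-variation bound (a Kakutani-type dichotomy: $\prod_i(1-\dtv(P_0\circ\lambda_i,P_1\circ\lambda_i))>0$ whenever the coordinatewise TV distances are summable and each is below one) then forces $\sum_i\dtv(P_0\circ\lambda_i,P_1\circ\lambda_i)=\infty$, and the coordinatewise bound $\dtv(P_0\circ\lambda_i,P_1\circ\lambda_i)\leq(c^{-1}-1)\min\{(P_0\circ\lambda_i)(A),(P_0\circ\lambda_i)(A^c)\}$ gives the conclusion. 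You instead work with a single measure and argue by contrapositive: summability of the minima makes $\xi=\sum_j(Y_j-p_j)$ an a.s.\ and $L^2$-convergent, permutation-invariant (hence $\Ecal_\infty$-, and so $\Ecal_n$-measurable) random variable with $\EE{Y_i\xi}=p_i(1-p_i)>0$; the identification $\widetilde{P}_{n,i}(A)\aseq\EEst{Y_i}{\Ecal_n}$ from Proposition \ref{prop:Xi_condition_on_Ecalm} and the pull-out property make $\EE{\widetilde{P}_{n,i}(A)\,\xi}$ constant in $n$, while the LLN plus dominated convergence would drive it to $p_i\,\EE{\xi}=0$, a contradiction. Both arguments are sound; the paper's is self-contained in that it never needs the conditional-expectation interpretation of the weights (only the raw LLN statement and a TV inequality), whereas yours leans on Proposition \ref{prop:Xi_condition_on_Ecalm} (proved in the paper's appendix and used there for the $\Lambda_{\01}\subseteq\Lambda_{\LLN}$ inclusion) but in exchange isolates a clean structural obstruction: when the divergence condition fails, there remains a nondegenerate exchangeable random variable correlated with each coordinate indicator, which no almost-sure deterministic limit of $\widetilde{P}_{n,i}(A)$ can coexist with. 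Your construction of $\xi$ is essentially the reverse-martingale limit perspective of the paper's $\Lambda_{\01}\subseteq\Lambda_{\LLN}$ proof, run in the opposite direction.
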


\begin{theorem}[Sufficient condition]\label{thm:suff_condition}
If $\lambda\in\Lambda^\infty$ satisfies
\begin{equation}\label{eq:suff_condition}
\sum_{i=1}^\infty \frac{\inf_{x\in\Xcal} \, \lambda_i(x)/\lambda_*(x)}  
{\sup_{x\in\Xcal} \, \lambda_i(x)/\lambda_*(x)} = \infty \quad 
\textnormal{for some $\lambda_* \in \Lambda$},
\end{equation} 
then \smash{$\lambda \in \Lambda_{\dF}$}.
\end{theorem}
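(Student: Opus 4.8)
The plan is to reduce to a constant reference weight and then reprove de Finetti's theorem in the weighted setting by a reverse-martingale argument, with the summability hypothesis entering exactly where conditional independence must be established. To remove $\lambda_*$ from the problem, write $\mu_i=\lambda_i/\lambda_*$; a one-line computation gives $P\circ\lambda_i=(P\circ\lambda_*)\circ\mu_i$ for every $P$, hence $P\circ\lambda=(P\circ\lambda_*)\circ\mu$ as distributions on $\Xcal^\infty$, and $P\mapsto P\circ\lambda_*$ is a bijection on the relevant measures with inverse $P'\mapsto P'\circ(1/\lambda_*)$. Thus the class of $\lambda$-weighted i.i.d.\ laws equals the class of $\mu$-weighted i.i.d.\ laws, and pushing a mixing distribution forward along $P\mapsto P\circ\lambda_*$ converts a $\mu$-representation of $Q$ into a $\lambda$-representation. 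Moreover, for a fixed $Q$ the properties of $\lambda$- and $\mu$-weighted exchangeability coincide, since the two dereweighted marginals differ only by the permutation-invariant factor $\prod_i\lambda_*(x_i)$, and reweighting an exchangeable measure by a symmetric function preserves exchangeability. It therefore suffices to treat $\lambda_*\equiv1$; rescaling each $\lambda_i$ by $\sup_x\lambda_i(x)$ (which alters neither $P\circ\lambda_i$ nor weighted exchangeability) puts the hypothesis in the form $\sum_i\inf_x\lambda_i(x)=\infty$ with each contributing $\lambda_i$ valued in $(0,1]$.

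Next, fix a $\lambda$-weighted exchangeable $Q$ and $X\sim Q$, and introduce the downward filtration $\Ecal_n$ generated by the unordered collection $\{X_1,\dots,X_n\}$ together with the tail $(X_{n+1},X_{n+2},\dots)$; these decrease to $\Ecal_\infty$. By Proposition~\ref{prop:Xi_condition_on_Ecalm} we have $\EEst{\One{X_i\in A}}{\Ecal_n}=\widetilde{P}_{n,i}(A)$ for $i\le n$, so L\'evy's downward theorem gives the almost-sure limits $\widetilde{P}_{n,i}(A)\asto\EEst{\One{X_i\in A}}{\Ecal_\infty}$, the weighted analogue of how the classical proof recovers the latent law. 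I would then reformulate the goal: by disintegration on the standard Borel space $\Xcal^\infty$ one has $Q=\EE{Q(\cdot\mid\Ecal_\infty)}$, so it is enough to show that the conditional law $Q(\cdot\mid\Ecal_\infty)$ is almost surely $\lambda$-weighted i.i.d. By the characterization \eqref{eq:weighted_iid}, this amounts to showing that $Q(\cdot\mid\Ecal_\infty)$ is (i) $\lambda$-weighted exchangeable and (ii) a product measure. Part (i) is inherited from $Q$, since conditioning on the permutation-invariant $\sigma$-algebra $\Ecal_\infty$ preserves weighted exchangeability just as in the unweighted case, so the entire difficulty is concentrated in part (ii), conditional independence.

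The crux is thus to prove that, conditionally on $\Ecal_\infty$, the coordinates are independent, and this is exactly where $\sum_i\inf_x\lambda_i(x)=\infty$ is indispensable. Applying L\'evy's theorem to products of indicators reduces independence to the statement that the joint weighted empirical distributions asymptotically factorize, i.e.\ that the sampling-without-replacement corrections in the permanent ratios defining $\widetilde{P}_{n,i}$ vanish as $n\to\infty$. The summability hypothesis is precisely the condition guaranteeing that no finite set of positions carries a non-vanishing fraction of the total weight, which is what forces these permanent ratios to split into products in the limit; contrapositively, this is exactly what fails in Example~\ref{example:binary}, where the geometrically decaying weights pin the single $1$ to the earliest coordinate and annihilate independence. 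I expect this factorization estimate — controlled quantitatively by the growth of the partial sums $\sum_{i\le n}\inf_x\lambda_i(x)$ — to be the main obstacle of the proof, and the step most closely mirroring the necessary condition of Theorem~\ref{thm:nec_condition}.

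Once conditional independence is in hand, parts (i) and (ii) together with \eqref{eq:weighted_iid} show that $Q(\cdot\mid\Ecal_\infty)=P\circ\lambda$ for an $\Ecal_\infty$-measurable random distribution $P$, which necessarily lies in $\Mcal_\Xcal(\lambda)$. Taking $\mu$ to be the law of $P$ under $Q$ and integrating the disintegration yields $Q=\Ep{P\sim\mu}{P\circ\lambda}=(P\circ\lambda)_\mu$, that is $\lambda\in\Lambda_{\dF}$, as claimed.
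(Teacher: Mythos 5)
Your reduction to $\lambda_*\equiv 1$ is sound (with one small caveat: indices with $\sup_x\lambda_i(x)=\infty$ or $\inf_x\lambda_i(x)=0$ cannot be normalized by $\sup_x\lambda_i(x)$; they contribute zero to the sum in \eqref{eq:suff_condition} and must simply be carried along), and your overall skeleton---L\'evy's downward theorem along $\Ecal_n\downarrow\Ecal_\infty$, disintegration of $Q$ over $\Ecal_\infty$, and the characterization \eqref{eq:weighted_iid} to identify the conditional law as $P\circ\lambda$---is a legitimate outline. But the proposal stops exactly at the theorem's actual content. Step (ii), conditional independence of the coordinates given $\Ecal_\infty$, is the entire difficulty, and you do not prove it: you state that the permanent ratios $(w_{n,i}(X_1,\dots,X_n))_j$ in \eqref{eq:wtd_empirical} should ``asymptotically factorize'' under the hypothesis and then write that you ``expect this factorization estimate \dots to be the main obstacle of the proof.'' That is a statement of the problem, not a solution. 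The weights $(w_{n,i})_j$ are ratios of permanents of the matrix $\bigl(\lambda_k(X_j)\bigr)_{k,j\le n}$, and there is no evident quantitative bound showing that, under \eqref{eq:suff_condition} alone, the joint conditional laws $\EEst{\prod_i f_i(X_i)}{\Ecal_n}$ split into products in the limit; nothing in your text supplies such a bound or even a strategy for one. (Claim (i) also needs more than ``inherited from $Q$'': the paper's symmetry argument in Section \ref{sec:proof_main2} conditions on a single positive-probability event in $\Ecal_\infty$, and upgrading it to an almost-sure statement about a regular conditional distribution requires a countable-generation argument---fixable, but not free.)

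It is worth seeing how the paper sidesteps the permanent estimate you are missing, because the detour is the real idea of the proof. Rather than analyzing $\widetilde{P}_{n,i}$ directly, the paper runs a rejection-sampling scheme: accept index $i$ with probability $p_i(X_i)=\inf_x\bigl(\lambda_i(x)/\lambda_*(x)\bigr)\big/\bigl(\lambda_i(X_i)/\lambda_*(X_i)\bigr)$, and let $\check{X}$ be the accepted subsequence. Condition \eqref{eq:suff_condition} enters only through the second Borel--Cantelli lemma, to guarantee infinitely many acceptances almost surely---not through any factorization estimate. Lemma \ref{lem:exch_inf_subseq} shows $\check{X}$ is \emph{unweighted} exchangeable, so the classical de Finetti theorem and law of large numbers apply to it and produce the random limit $\widetilde{P}$ of the weighted empirical measures \eqref{eq:Ptilde}. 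The conditional-independence structure you were trying to establish directly is then recovered \emph{a posteriori}, via Lemma \ref{lem:check_P_circ_lambda_k} (which identifies $\Ppst{Q}{X_k\in A}{X_{-k}}$ with $(\widetilde{P}_*\circ\lambda_k)(A)$) and a tower-law computation using the $\Ftail$-measurable representatives from Lemma \ref{lem:Ftail_measurable}. If you want to salvage your route, you would need to prove the factorization of the permanent ratios as a lemma in its own right; as it stands, the proposal has a genuine gap precisely where the hypothesis \eqref{eq:suff_condition} must do its work.
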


To summarize, combining our main results, we have: 
\begin{equation}\label{eqn:five_sets_subseteq}
\{\textnormal{$\lambda$ satisfying \eqref{eq:suff_condition}}\} \subseteq
\Lambda_{\dF} \subseteq \Lambda_{\01} \subseteq \Lambda_{\LLN}
\subseteq \{\textnormal{$\lambda$ satisfying \eqref{eq:nec_condition}}\}.
\end{equation}
In what follows, we explore whether these set inclusions are strict, or whether
they are equalities. In particular, we will derive precise answers to these
questions for the special case where $\Xcal$ has finite cardinality; in the
infinite case, we will see that open questions remain.

\subsection{Special case: binary random variables}

Before treating the finite case in full generality, it is useful to consider the
case of binary random variables, where $\Xcal$ contains two elements, and we
can take $\Xcal = \{0,1\}$ without loss of generality. In this case, it turns
out that all inclusions above, in \eqref{eqn:five_sets_subseteq}, are
equalities. This was already established by \citet{lauritzen1988extremal}. Below
we show that this can be derived as a consequence of Theorem \ref{thm:main}.  

\begin{theorem}[{Adapted from Sections II.9.1, II.9.2 and Chapter II Theorem 
    4.4 of \citet{lauritzen1988extremal}}]\label{thm:binary_case}
For the binary case where $\Xcal = \{0,1\}$, the five sets defined above are
equal: 
\[
\{\textnormal{$\lambda$ satisfying \eqref{eq:suff_condition}}\} =
\Lambda_{\dF} = \Lambda_{\01} = \Lambda_{\LLN}
= \{\textnormal{$\lambda$ satisfying \eqref{eq:nec_condition}}\}.
\]
Moreover, the common necessary \eqref{eq:nec_condition} and sufficient
\eqref{eq:suff_condition} conditions can equivalently be expressed as:
\[
\sum_{i=1}^\infty \frac{\min\{\lambda_i(0), \lambda_i(1)\}}
{\max\{\lambda_i(0), \lambda_i(1)\}} = \infty. 
\]
\end{theorem}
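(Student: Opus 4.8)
The plan is to leverage the chain of inclusions already established in \eqref{eqn:five_sets_subseteq}, namely
\[
\{\textnormal{$\lambda$ satisfying \eqref{eq:suff_condition}}\} \subseteq \Lambda_{\dF} \subseteq \Lambda_{\01} \subseteq \Lambda_{\LLN} \subseteq \{\textnormal{$\lambda$ satisfying \eqref{eq:nec_condition}}\}.
\]
To collapse all five sets into one, it suffices to prove that the outermost two sets coincide in the binary case and to identify both with the simple series condition $\sum_i m_i/M_i = \infty$, where I write $m_i = \min\{\lambda_i(0),\lambda_i(1)\}$ and $M_i = \max\{\lambda_i(0),\lambda_i(1)\}$. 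So I would prove two equivalences: (i) condition \eqref{eq:suff_condition} holds $\iff \sum_i m_i/M_i = \infty$, and (ii) condition \eqref{eq:nec_condition} holds $\iff \sum_i m_i/M_i = \infty$. Given the sandwiching, these two equivalences immediately force all five sets to be equal.

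The technical engine for both equivalences is a single elementary observation about the function $g(s) = \min\{s, 1/s\}$ on $s \in (0,\infty)$. Writing $g(s) = e^{-|\ln s|}$ and using the reverse triangle inequality $\big||\ln(cs)| - |\ln s|\big| \le |\ln c|$, I obtain the two-sided bound $\max\{c, 1/c\}^{-1}\, g(s) \le g(cs) \le \max\{c,1/c\}\, g(s)$ for every fixed $c > 0$. Consequently $\sum_i g(c t_i)$ and $\sum_i g(t_i)$ converge or diverge together, for any fixed $c > 0$: rescaling the argument by a constant changes each summand only by a bounded multiplicative factor and therefore cannot affect divergence. Setting $t_i = \lambda_i(0)/\lambda_i(1)$, note that $g(t_i) = m_i/M_i$.

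For equivalence (i): in the binary case each term of \eqref{eq:suff_condition} is $\frac{\min\{\lambda_i(0)/\lambda_*(0), \lambda_i(1)/\lambda_*(1)\}}{\max\{\lambda_i(0)/\lambda_*(0),\lambda_i(1)/\lambda_*(1)\}} = g(c\, t_i)$ with $c = \lambda_*(1)/\lambda_*(0)$. Taking $\lambda_* \equiv 1$ (so $c = 1$) exhibits $\sum_i m_i/M_i = \infty$ as one admissible witness, giving the ``$\Leftarrow$'' direction; the ``$\Rightarrow$'' direction follows because any witnessing $\lambda_*$ produces some $c>0$ with $\sum_i g(c t_i) = \infty$, whereupon the bounded-ratio bound forces $\sum_i g(t_i) = \infty$. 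For equivalence (ii): in the binary case the only sets $A$ with $P(A), P(A^c) > 0$ are $\{0\}$ and $\{1\}$ (which requires $p_0 := P(\{0\})$ and $p_1 := P(\{1\})$ both in $(0,\infty)$), and for such $A$ the summand of \eqref{eq:nec_condition} equals $\frac{\min\{\lambda_i(0) p_0, \lambda_i(1) p_1\}}{\lambda_i(0) p_0 + \lambda_i(1) p_1}$, which is sandwiched between $\tfrac12 g(c t_i)$ and $g(c t_i)$ with $c = p_0/p_1$. The ``$\Leftarrow$'' direction then follows since $\sum_i m_i/M_i = \infty$ forces $\sum_i g(c t_i) = \infty$ for every $c$, hence divergence of the \eqref{eq:nec_condition} sum for every relevant $P$; the ``$\Rightarrow$'' direction follows by testing the single choice $p_0 = p_1$ (so $c = 1$), for which the summand is at most $g(t_i) = m_i/M_i$.

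The hard part is really just pinning down the quantifiers correctly. Condition \eqref{eq:suff_condition} is an existential over $\lambda_*$ while \eqref{eq:nec_condition} is a universal over $P$; the content of the argument is that, in the binary case, the bounded-ratio bound renders both quantifiers vacuous---every constant rescaling $c$ yields the same convergence behavior---so both conditions collapse to the single series $\sum_i m_i/M_i = \infty$. The only mild bookkeeping is tracking the harmless factor of two between the denominators $\max\{\alpha,\beta\}$ and $\alpha + \beta$ in the necessary condition, and confirming that the measures $P \in \Mcal_\Xcal(\lambda)$ relevant to \eqref{eq:nec_condition} (those with $P(\{0\}), P(\{1\}) > 0$) are exactly those with $p_0, p_1 \in (0,\infty)$, since each $\lambda_i$ is bounded above and below on the two-point space.
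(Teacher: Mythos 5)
Your proposal is correct and takes essentially the same approach as the paper: both exploit the chain \eqref{eqn:five_sets_subseteq} and collapse it by linking the necessary condition \eqref{eq:nec_condition} to the sufficient condition \eqref{eq:suff_condition} through the pivot series $\sum_{i=1}^\infty \min\{\lambda_i(0),\lambda_i(1)\}/\max\{\lambda_i(0),\lambda_i(1)\} = \infty$. The paper's proof is exactly the composition of your (ii) ``$\Rightarrow$'' direction (testing the single measure with $p_0 = p_1$, i.e.\ $\mathrm{Bernoulli}(0.5)$, and $A = \{0\}$) with your (i) ``$\Leftarrow$'' direction (the witness $\lambda_* \equiv 1$), which by the sandwich already yields all five equalities including the ``moreover'' claim; your remaining two directions, handled via the rescaling bound on $g(s) = \min\{s, 1/s\}$, are sound but redundant.
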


\begin{proof} 
\citet{lauritzen1988extremal} establishes this result for $\Xcal = \{0,1\}$ 
through the lens of extremal families of distributions. Here we instead give a
simple proof by applying Theorems \ref{thm:main}, \ref{thm:nec_condition},
and \ref{thm:suff_condition}, which hold for a general standard Borel space 
$\Xcal$. It suffices to show that the necessary condition
\eqref{eq:nec_condition} implies the sufficient condition 
\eqref{eq:suff_condition} when $\Xcal = \{0,1\}$.  

Towards this end, let $\lambda$ satisfy \eqref{eq:nec_condition},
thus \smash{$\sum_{i=1}^\infty \min\{(P\circ\lambda_i)(A),
  (P\circ\lambda_i)(A^c)\} = \infty$} holds for all measures
$P\in\Mcal_\Xcal(\lambda)$ and $A\in\BX$ with $P(A),P(A^c)>0$. Fixing  
$P = \mathrm{Bernoulli}(0.5) \in \Mcal_\Xcal(\lambda)$, and $A=\{0\}$, we have
$P(A),P(A^c)>0$, so we can apply \eqref{eq:nec_condition}. We calculate 
\[
(P\circ\lambda_i)(\{0\}) = \frac{\lambda_i(0)}{\lambda_i(1) + \lambda_i(0)},
\quad 
(P\circ\lambda_i)(\{1\}) = \frac{\lambda_i(1)}{\lambda_i(1) + \lambda_i(0)}.  
\]
By \eqref{eq:nec_condition}, then,
\begin{multline*}
\infty
= \sum_{i=1}^\infty \min\{(P\circ\lambda_i)(A),
(P\circ\lambda_i)(A^c)\} 
= \sum_{i=1}^\infty \min\{(P\circ\lambda_i)(\{0\}),
(P\circ\lambda_i)(\{1\})\} \\ 
= \sum_{i=1}^\infty \frac{\min\{\lambda_i(0), \lambda_i(1)\}}
{\lambda_i(1) + \lambda_i(0) }
\leq \sum_{i=1}^\infty \frac{\min\{\lambda_i(0), \lambda_i(1)\}}
{\max\{\lambda_i(0) , \lambda_i(1) \}}    
= \sum_{i=1}^\infty \frac{\inf_{x\in\Xcal} \, \lambda_i(x)/\lambda_*(x)} 
{\sup_{x\in\Xcal} \, \lambda_i(x)/\lambda_*(x)},
\end{multline*} 
where in the last step we define the function $\lambda_* : \Xcal \to
(0,\infty)$ by $\lambda_*(x)\equiv 1$. This proves that, if the necessary
condition \eqref{eq:nec_condition} holds, then the sufficient condition 
\eqref{eq:suff_condition} also holds. Finally, the fact that the condition:
\smash{$\sum_{i=1}^\infty \min\{\lambda_i(0), \lambda_i(1)\} /
  \max\{\lambda_i(0), \lambda_i(1)\} = \infty$} is equivalent to the common
necessary and sufficient condition is a consequence of the last display.
\end{proof}

\subsection{Beyond the binary case}

Now we move beyond the binary case. First we will see that when $|\Xcal| \geq  
3$, it will no longer be the case that the five sets in
\eqref{eqn:five_sets_subseteq} are all equal: specifically, there is always a
gap between the necessary condition \eqref{eq:nec_condition} and the sufficient  
condition \eqref{eq:suff_condition}.  

\begin{proposition}\label{prop:Xcal_size_at_least_3}
If $|\Xcal|\geq 3$, then 
\[
\{\textnormal{$\lambda$ satisfying \eqref{eq:suff_condition}}\} 
\subsetneq \{\textnormal{$\lambda$ satisfying \eqref{eq:nec_condition}}\}, 
\]
that is, the necessary condition \eqref{eq:nec_condition} is strictly weaker
than the sufficient condition \eqref{eq:suff_condition}. 
\end{proposition}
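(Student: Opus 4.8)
The containment $\{\lambda\text{ satisfying }\eqref{eq:suff_condition}\}\subseteq\{\lambda\text{ satisfying }\eqref{eq:nec_condition}\}$ is already part of \eqref{eqn:five_sets_subseteq}, so the plan is to witness strictness by exhibiting, for an arbitrary $\Xcal$ with $|\Xcal|\geq 3$, a single sequence $\lambda\in\Lambda^\infty$ that satisfies the necessary condition \eqref{eq:nec_condition} but fails the sufficient condition \eqref{eq:suff_condition}.

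First I would record a simplification of \eqref{eq:suff_condition} that applies to weight functions taking only finitely many values. Writing $u_i=\log\lambda_i$ and $v=\log\lambda_*$, the $i$-th summand in \eqref{eq:suff_condition} equals $\exp(-\operatorname{range}(u_i-v))$, where $\operatorname{range}(w)=\sup_x w(x)-\inf_x w(x)$. Since $\operatorname{range}$ is a translation-invariant seminorm, the (reverse) triangle inequality gives $e^{-\operatorname{range}(u_i-v)}\leq e^{\operatorname{range}(v)}\,e^{-\operatorname{range}(u_i)}$, and $e^{-\operatorname{range}(u_i)}=\inf_x\lambda_i(x)/\sup_x\lambda_i(x)$. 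Thus, when $\lambda_i$ takes finitely many values so that the relevant $\operatorname{range}(v)$-type factors are bounded, the choice $\lambda_*\equiv 1$ is essentially optimal, and \eqref{eq:suff_condition} fails as soon as $\sum_i \inf_x\lambda_i(x)/\sup_x\lambda_i(x)<\infty$. This reduces ``not \eqref{eq:suff_condition}'' to the convergence of a single, easily controlled series.

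Then comes the construction. Fix three distinct points and use them to partition $\Xcal$ into three nonempty Borel cells $C_0,C_1,C_2$ (for instance two singletons and their complement). Choose scalars $H_i\to\infty$ with $\sum_i 1/H_i<\infty$ (say $H_i=i^2$), and let $\lambda_i$ be constant on each cell, taking a value $v_i^{(k)}$ on $C_k$ equal to $H_i$ on two cells and $1$ on the remaining cell, cycling the ``low'' cell through $C_2,C_0,C_1$ as $i\equiv 1,2,0\pmod{3}$ respectively. Failure of \eqref{eq:suff_condition} is immediate from the previous paragraph, since $\inf_x\lambda_i(x)/\sup_x\lambda_i(x)=1/H_i$ and $\sum_i 1/H_i<\infty$; I would also check that, because $\lambda_i$ takes only three values, the relevant bound on $\sum_i \inf\,(\lambda_i/\lambda_*)/\sup\,(\lambda_i/\lambda_*)$ survives for an arbitrary $\lambda_*$ even when $\Xcal$ is infinite (splitting into the cases where $\lambda_*$ is, or is not, bounded above and below on each cell).

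The main work—and the step I expect to be the real obstacle—is verifying the necessary condition \eqref{eq:nec_condition} for \emph{every} $P\in\Mcal_\Xcal(\lambda)$ and \emph{every} event $A\in\BX$ with $P(A),P(A^c)>0$. The subtlety is that $A$ need not respect the cell partition and $P$ may be arranged adversarially; indeed a naive three-point example (constant weights off three fixed points) fails precisely here, because an event can lump all the informative mass on one side of a split and be decided cheaply, making \eqref{eq:nec_condition} converge. With the cell construction this cannot happen: writing $a_k=P(A\cap C_k)$ and $b_k=P(A^c\cap C_k)$, the $i$-th summand of \eqref{eq:nec_condition} is $\min\{\sum_k v_i^{(k)}a_k,\ \sum_k v_i^{(k)}b_k\}/\sum_k v_i^{(k)}(a_k+b_k)$, and for an index whose two ``high'' cells form a pair $S$ this tends, as $H_i\to\infty$, to $\min\{\sum_{k\in S}a_k,\ \sum_{k\in S}b_k\}/\sum_{k\in S}(a_k+b_k)$, which is a fixed positive constant whenever both $\sum_{k\in S}a_k>0$ and $\sum_{k\in S}b_k>0$. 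The crux is a short pigeonhole argument: for any nontrivial $A$, at least one of the three cell-pairs $S$ meets both positivity conditions, since if none did, then choosing any cell with $a_k>0$ would (via the two pairs containing it) force $b_0=b_1=b_2=0$, contradicting $P(A^c)>0$. As each pair $S$ serves as the high pair for infinitely many $i$ with $H_i\to\infty$, those indices alone contribute infinitely many terms bounded below by a fixed positive constant, so the series in \eqref{eq:nec_condition} diverges. I would close by noting that the degenerate cases in which some $a_k$ or $b_k$ vanishes only simplify this analysis, which establishes $\lambda\in\{\lambda\text{ satisfying }\eqref{eq:nec_condition}\}\setminus\{\lambda\text{ satisfying }\eqref{eq:suff_condition}\}$ and hence the claimed strict inclusion.
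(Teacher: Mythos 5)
Your proposal is correct and takes essentially the same approach as the paper: a three-cell partition of $\Xcal$ with cyclically rotating, cell-constant weights (your ``$H_i$ on two cells, $1$ on the third'' is just a positive rescaling of the paper's ``$e^{-i}$ on one cell, $1$ on the other two,'' and both \eqref{eq:suff_condition} and \eqref{eq:nec_condition} are invariant to rescaling each $\lambda_i$). The verification also matches: failure of \eqref{eq:suff_condition} is reduced to the values of $\lambda_*$ on representatives of the three cells, and your pigeonhole over cell-pairs is exactly the paper's selection of the residue class $\textnormal{mod}(i,3)=\ell$ with $\ell\notin\{\ell_A,\ell_{A^c}\}$, yielding infinitely many summands in \eqref{eq:nec_condition} bounded below by a positive constant.
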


This result is proved with a simple example: we choose a partition $\Xcal =
\Xcal_0\cup \Xcal_1\cup\Xcal_2$ for some nonempty
$\Xcal_0,\Xcal_1,\Xcal_2\in\BX$, and define 
\[
\lambda_i(x) = \begin{cases} 
e^{-i} & x\in \Xcal_{\textnormal{mod}(i,3)},\\ 
1 & x \not\in \Xcal_{\textnormal{mod}(i,3)}, 
\end{cases} \quad i \geq 1.
\]
The full proof is given in Appendix \ref{sec:proof_prop:Xcal_size_at_least_3}. 

Proposition \ref{prop:Xcal_size_at_least_3} implies that, in the sequence of
four set inclusions in \eqref{eqn:five_sets_subseteq}, at least one of these set 
inclusions must be strict whenever $|\Xcal|\geq 3$---but the result does not
specify exactly where this gap might occur. The following theorem resolves this
question for the finite case, where $\Xcal$ has finite cardinality. Its proof is
given in Appendix \ref{sec:proof_thm:finite_case}.   

\begin{theorem}\label{thm:finite_case}
If $|\Xcal|<\infty$, then
\[
\Lambda_{\dF} = \Lambda_{\01} = \Lambda_{\LLN} 
= \{\textnormal{$\lambda$ satisfying \eqref{eq:nec_condition}}\},
\]
that is, the condition \eqref{eq:nec_condition} is in fact both necessary and
sufficient for the sets \smash{$\Lambda_{\dF}, \Lambda_{\01}, \Lambda_{\LLN}$}.
\end{theorem}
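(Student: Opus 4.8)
The plan is to prove the single remaining inclusion $\{\lambda \text{ satisfying } \eqref{eq:nec_condition}\} \subseteq \Lambda_{\dF}$; combined with the chain \eqref{eqn:five_sets_subseteq} already in hand, this collapses $\Lambda_{\dF}, \Lambda_{\01}, \Lambda_{\LLN}$, and $\{\lambda \text{ satisfying } \eqref{eq:nec_condition}\}$ into a single set. At the outset I emphasize that, unlike the binary argument in Theorem \ref{thm:binary_case}, we cannot route through the sufficient condition \eqref{eq:suff_condition}: Proposition \ref{prop:Xcal_size_at_least_3} shows that \eqref{eq:nec_condition} is strictly weaker than \eqref{eq:suff_condition} once $|\Xcal|\geq 3$, so a \emph{direct} construction of the de Finetti mixture is required.

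So fix $\lambda$ satisfying \eqref{eq:nec_condition}, a $\lambda$-weighted exchangeable distribution $Q$ on $\Xcal^\infty$, and write $X=(X_1,X_2,\dots)\sim Q$. The mixing measure $\mu$ is built by recovering a latent measure from the exchangeable $\sigma$-algebra $\Ecal_\infty$. By Proposition \ref{prop:Xi_condition_on_Ecalm}, the weighted empirical distribution $\widetilde P_{n,i}$ of \eqref{eq:wtd_empirical} is exactly the conditional law of $X_i$ given the unordered collection $\{X_1,\dots,X_n\}$ together with $X_{n+1},X_{n+2},\dots$; these conditioning $\sigma$-algebras are decreasing in $n$ with intersection a version of $\Ecal_\infty$, so $n\mapsto\widetilde P_{n,i}(A)$ is a bounded reverse martingale for each fixed $A\in\BX$. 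Since $\Xcal$ is finite, $\BX$ is finite, and L\'evy's downward convergence theorem produces an $\Ecal_\infty$-measurable limit $\Pi_i(\{a\})=\lim_n \widetilde P_{n,i}(\{a\})$, $Q$-almost surely and simultaneously over the finitely many $a\in\Xcal$, defining an $\Ecal_\infty$-measurable random probability measure $\Pi_i$.

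The heart of the argument is to show that, conditionally on $\Ecal_\infty$, the sequence $X$ is \emph{weighted i.i.d.}: that there is a single $\Ecal_\infty$-measurable random measure $\widehat P\in\Mcal_\Xcal(\lambda)$ whose regular conditional distribution of $X$ given $\Ecal_\infty$ equals the product $\widehat P\circ\lambda$, with marginals $\Pi_i=\widehat P\circ\lambda_i$. Granting this, one takes $\mu$ to be the law of $\widehat P$ under $Q$ and reads off $Q=(\widehat P\circ\lambda)_\mu$, precisely the representation certifying $\lambda\in\Lambda_{\dF}$. Finiteness enters a second time, and cleanly: once the conditional marginals factor as $\Pi_i=\widehat P\circ\lambda_i$, the measure $\widehat P$ is recovered from any single $\Pi_i$ by inverting the reweighting via $\widehat P(\{a\})\propto\Pi_i(\{a\})/\lambda_i(a)$ (well-defined because $\lambda_i>0$ and $\Xcal$ is finite), and the recovered profile is then automatically consistent across $i$.

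I expect the main obstacle to be establishing this conditional factorization---equivalently, a \emph{conditional} weighted zero-one law asserting that, given $\Ecal_\infty$, the coordinates become mutually independent with the stated marginals. This is exactly the step that fails when \eqref{eq:nec_condition} is violated: in Example \ref{example:binary} we have $\lambda\notin\Lambda_{\dF}$, and indeed the conditional law of $X$ given $\Ecal_\infty$ does not factorize into a weighted product. The hypothesis \eqref{eq:nec_condition} supplies what is missing: its divergence is a Borel--Cantelli-type condition guaranteeing that, for every nontrivial split $A,A^c$ of $\Xcal$, infinitely many coordinates carry conditional mass on both sides, and a coupling/swapping argument built on the weighted exchangeability of $Q$ (using the explicit weights in \eqref{eq:wtd_empirical}) then forces each tail event to be $\Ecal_\infty$-measurable and the coordinates to decouple. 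Carrying out this conditional decoupling---reduced, thanks to the finiteness of $\Xcal$, to finitely many events $A$---is the delicate, technically heaviest part; the reverse-martingale recovery and the finite-dimensional inversion above are comparatively routine.
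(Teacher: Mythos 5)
Your reduction to the single inclusion $\{\textnormal{$\lambda$ satisfying \eqref{eq:nec_condition}}\} \subseteq \Lambda_{\dF}$ is exactly right, as is the observation that one cannot route through \eqref{eq:suff_condition}; the reverse-martingale step (Proposition \ref{prop:Xi_condition_on_Ecalm} plus L\'evy downward convergence) and the finite-dimensional inversion $\widehat P(\{a\}) \propto \Pi_i(\{a\})/\lambda_i(a)$ are also sound. But the proposal has a genuine gap at precisely the point you yourself flag as ``the delicate, technically heaviest part'': the conditional decoupling of the coordinates given $\Ecal_\infty$ is never proved, only asserted to follow from ``a Borel--Cantelli-type condition'' and ``a coupling/swapping argument.'' This is not a routine verification---it is the entire content of the theorem---and the heuristic you give for it does not work as stated. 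Borel--Cantelli requires independence, which is exactly what is unavailable under a general $\lambda$-weighted exchangeable $Q$ (as opposed to a product $P\circ\lambda$); under $Q$, the divergence in \eqref{eq:nec_condition} does not directly yield the almost-sure statement that ``infinitely many coordinates carry conditional mass on both sides'' of a split, and establishing such almost-sure divergences along the random sequence $X$ is where the real work lies.

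The paper's proof supplies three ideas that your outline is missing. First, \eqref{eq:nec_condition} quantifies over all $P \in \Mcal_\Xcal(\lambda)$ and sets $A$ with $P(A),P(A^c)>0$, so what it buys depends on which points of $\Xcal$ the sequence actually visits; the paper therefore first decomposes $Q$ as a mixture over supports $S$, conditioning on $\{S(X)=S\}\in\Ecal_\infty$, and reformulates \eqref{eq:nec_condition} as connectivity of a graph $G_S$ for \emph{every} nonempty $S\subseteq\Xcal$ (Lemma \ref{lem:nec_equiv_finite_case}). Second, to obtain almost-sure divergence of the weighted pairwise counts $\sum_i \frac{\min\{\lambda_i(x),\lambda_i(x')\}}{\lambda_i(X_i)}\cdot\one{X_i\in\{x,x'\}}$ under $Q_S$---the statement your Borel--Cantelli gesture is meant to replace---the paper proves a zero-or-infinity dichotomy (Lemma \ref{lem:zero_or_inf}) by a contradiction argument built on weighted swaps via Proposition \ref{prop:wtd_exch_equiv}, not on independence; this feeds into Lemma \ref{lem:infinite_sum_for_finite_case}. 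Third, the limiting random measure is not recovered from a single conditional marginal $\Pi_i$; it is assembled edge-by-edge along a rooted spanning tree of $G_S$, with the pairwise ratios $\widetilde P(x;x')$ shown to converge by extracting an exchangeable subsequence through rejection sampling and applying the classical de Finetti theorem and law of large numbers to that subsequence (Lemma \ref{lem:convergence_for_finite_case}); the product formula along tree paths and a final swapping computation then identify $\PPst{X_k = x}{X_{-k}}$ with $(\widetilde P\circ\lambda_k)(x)$, which is the factorization you need. Without these steps, or substitutes for them, what you have is a correct statement of the proof obligation rather than a proof.
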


For the infinite case, however, no such result is known at present. Combining
all of our results so far, we can now summarize these different regimes as
follows. 

\begin{itemize}
\item \textbf{Binary case.} If $|\Xcal| = 2$, then by
  Theorem \ref{thm:binary_case},  
\[
\{\textnormal{$\lambda$ satisfying \eqref{eq:suff_condition}}\} =
\Lambda_{\dF} = \Lambda_{\01} = \Lambda_{\LLN}
= \{\textnormal{$\lambda$ satisfying \eqref{eq:nec_condition}}\}.
\]
(The same holds trivially for the singleton case, $|\Xcal|=1$.)

\item \textbf{Finite case.} If $2 < |\Xcal| < \infty$, then by Proposition
  \ref{prop:Xcal_size_at_least_3} and Theorem \ref{thm:finite_case}, 
\[
\{\textnormal{$\lambda$ satisfying \eqref{eq:suff_condition}}\} \subsetneq 
\Lambda_{\dF} = \Lambda_{\01} = \Lambda_{\LLN}
= \{\textnormal{$\lambda$ satisfying \eqref{eq:nec_condition}}\}.
\]
That is, condition \eqref{eq:nec_condition} is both necessary and sufficient,
and condition \eqref{eq:suff_condition} is strictly stronger than needed. 

\item \textbf{Infinite case.} If $|\Xcal| = \infty$, then by Proposition
  \ref{prop:Xcal_size_at_least_3}, 
\[
\{\textnormal{$\lambda$ satisfying \eqref{eq:suff_condition}}\} 
\subsetneq \{\textnormal{$\lambda$ satisfying \eqref{eq:nec_condition}}\}.
\]
However, for the infinite case, it is currently an open question to determine
which of the four set inclusions in \eqref{eqn:five_sets_subseteq} are strict. 
\end{itemize}

\section{Discussion}

This work studies and generalizes de Finetti's theorem through the lens of what
we call weighted exchangeability. Our main result shows that if a sequence of  
weight functions $\lambda$ satisfies a weighted de Finetti representation, then
this implies $\lambda$ also satisfies a weighted zero-one law, which in turn
implies $\lambda$ satisfies a weighted law of large numbers. We also present
more explicit sufficient (for the weighted de Finetti theorem representation)
and necessary (for the weighted law of large numbers) conditions.  
 After the initial version of our paper appeared online,
  \citet{tang2023finite} built on our work to derive interesting, approximate de
  Finetti representations for \emph{finite} weighted exchangeable sequences 
  $X_1,\dots,X_n$ (analogous to well-known results by \citet{diaconis1977finite,
    diaconis1980finite} for finite unweighted exchangeable sequences).

A potentially important contribution of this work, which we have not yet
discussed at this point in the paper, lies in the proof of the sufficient
condition---in Section \ref{sec:proof_suff} below. There we establish that, if  
the sufficient condition holds, one can start with an infinite weighted
exchangeable sequence $X$ and construct an \emph{exchangeable} infinite
subsequence \smash{$\check{X}$} by a careful rejection sampling scheme
(described in Section \ref{sec:construct_Xcheck}), whose limiting
\emph{unweighted} empirical distribution matches a weighted empirical
distribution of the original sequence. This is a key to our proof of the
sufficient condition, and may be useful in other problems in which weighted
exchangeability arises.  

We finish by discussing the connection between weighted exchangeability and the
literature on distribution-free statistical inference. Conformal prediction is a
general framework for quantifying uncertainty in the predictions made by
arbitrary prediction algorithms. It does so by acting as a wrapper method,
converting the predictions made by an algorithm into prediction sets which have    
distribution-free, finite-sample coverage properties; see, e.g., 
\citet{vovk2005algorithmic, lei2018distribution} for background. Importantly,
the coverage guarantees for conformal prediction rely on the assumption that 
all of the data---the training samples (fed into the algorithm, to fit the
predictive model) and test sample (at which we want to a form prediction
set)---are exchangeable. 

In previous work \citep{tibshirani2019conformal}, we extended the conformal
prediction framework to a setting where the data is not exchangeable, but
instead weighted exchangeable (precisely as defined in the current paper). This
framework allows conformal prediction to be applied in problems with covariate
shift (where the training and test covariate distributions differ); moreover, it
can be used as a basis for developing new conformal methods in various settings,
such as label shift \citep{podkopaev2021distribution}, causal inference
\citep{lei2021conformal}, experimental design \citep{fannjiang2022conformal},
and survival analysis \citep{candes2023conformalized}. In each of these
examples, the data set in hand is finite (and assumed to be weighted
exchangeable). In contrast, the current paper considers an infinite weighted exchangeable 
sequence; the characterizations that we developed for such sequences (as
mixtures of weighted i.i.d.\ sequences) may be useful for understanding online
(streaming data) problems in nonexchangeable conformal prediction. Developing
such connections, as well as broadly pursuing applications of our
theorems to other problems in statistical modeling and inference, will be topics
for future work.

\subsection*{Acknowledgements} 

We are grateful to Vladimir Vovk for suggesting that we study a de
Finetti-type representation in the weighted setting, and for introducing us to
the results of \citet{lauritzen1988extremal}. We are also grateful to Isaac
Gibbs for helpful feedback on an earlier draft of this manuscript.

We thank the American Institute of Mathematics for supporting and hosting our
collaboration. R.F.B.\ was supported by the National Science Foundation via
grants DMS-1654076 and DMS-2023109, and by the Office of Naval Research via
grant N00014-20-1-2337.  E.J.C.\ was supported by the Office of Naval Research
grant N00014-20-1-2157, the National Science Foundation grant DMS-2032014, the
Simons Foundation under award 814641, and the ARO grant 2003514594.

\section{Proofs of main results}\label{sec:proofs}

\subsection{Important properties}

Before proving our main results, we state a number of basic but important
properties of exchangeability and weighted exchangeability. These results will
not only be helpful later on in our proofs; they will also help build
intuition about (weighted) exchangeability, and some could be of potential
interest in their own right.     

\subsubsection{Equivalent characterizations} 

In the unweighted case, checking exchangeability of a distribution can be
reduced to checking that the joint distribution of $X_1, X_2, \ldots$ is
unchanged by swapping any pair of random variables. For concreteness, we record
this fact in the next result. For a finite sequence or infinite sequence $x$, we
will write \smash{$x^{ij}$} to denote $x$ but with \smash{$i\th$} and
\smash{$j\th$} entries swapped.     

\begin{proposition}\label{prop:exch_equiv}
For any distribution $Q$ on $\Xcal^n$ (or on $\Xcal^\infty$), the following 
statements are equivalent:  
\begin{enumerate}
\item[(a)] $Q$ is exchangeable.
\item[(b)] For all $1 \leq i < j \leq n$ (or all $1 \leq i < j$), and all
  measurable functions $f: \Xcal^n \to [0,\infty)$
  (or all measurable functions $f: \Xcal^\infty \to [0,\infty)$), 
  \[
  \Ep{Q}{f(X)} = \Ep{Q}{f(X^{ij})}.
  \]
\end{enumerate}
\end{proposition}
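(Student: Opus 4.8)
The plan is to prove the two implications (b) $\Rightarrow$ (a) and (a) $\Rightarrow$ (b) separately, leaning on one algebraic fact and one measure-theoretic fact. The algebraic fact is that the transpositions $(i\,j)$ generate the full symmetric group $\Scal_n$; since swapping the $i\th$ and $j\th$ coordinates, i.e.\ the map $x \mapsto x^{ij}$, realizes exactly the transposition $(i\,j)$, controlling pairwise swaps will control arbitrary permutations. The measure-theoretic fact is that the measurable rectangles form a $\pi$-system generating $\BXn$ (respectively $\BXinf$)---which holds precisely because $\Xcal$ is standard Borel, as recorded in the excerpt---so that two probability measures agreeing on all rectangles agree everywhere, by Dynkin's $\pi$--$\lambda$ theorem.

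For (b) $\Rightarrow$ (a): fix $1 \le i < j \le n$, set $\tau = (i\,j) \in \Scal_n$, and apply (b) to the indicator $f = \one{A_1 \times \cdots \times A_n}$ of a measurable rectangle. Since $f(x^{ij})$ is the indicator of the rectangle obtained by interchanging the sets $A_i$ and $A_j$, the identity $\Ep{Q}{f(X)} = \Ep{Q}{f(X^{ij})}$ reads $Q(A_1 \times \cdots \times A_n) = Q(A_{\tau(1)} \times \cdots \times A_{\tau(n)})$. Thus $Q$ is invariant under every transposition in the sense of Definition \ref{def:exch_finite}. The set of permutations $\sigma$ for which this invariance holds is a subgroup of $\Scal_n$ (as is immediate from composing the defining rectangle identities) that contains all transpositions, and hence equals $\Scal_n$, which is exactly exchangeability. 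For the infinite case one applies this to each marginal $Q_n$ and invokes Definition \ref{def:exch}.

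For (a) $\Rightarrow$ (b), which I expect to be the crux: fix $i < j$ and let $R$ denote the law of $X^{ij}$ under $Q$, i.e.\ the pushforward of $Q$ along $x \mapsto x^{ij}$. On any measurable rectangle, evaluating $R$ amounts to evaluating $Q$ on the rectangle with $A_i$ and $A_j$ interchanged, which by exchangeability (invariance under the transposition $(i\,j)$, a special case of Definition \ref{def:exch_finite}) equals $Q$ on the original rectangle. Hence $R$ and $Q$ agree on all measurable rectangles. Because these rectangles form a $\pi$-system generating $\BXn$ and both $R$ and $Q$ are probability measures, Dynkin's theorem gives $R = Q$ on all of $\BXn$. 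Finally, lifting set-equality to integral-equality by the standard approximation argument (indicators, then nonnegative simple functions, then monotone convergence) yields $\Ep{Q}{f(X^{ij})} = \int f \, \d R = \int f \, \d Q = \Ep{Q}{f(X)}$ for every nonnegative measurable $f$.

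The main obstacle is precisely this last direction: one must pass from the defining invariance on rectangles to invariance of integrals of arbitrary nonnegative measurable functions. The real content lies in justifying that agreement on the rectangle $\pi$-system propagates to the generated product $\sigma$-algebra---this is where the standard Borel assumption on $\Xcal$ is essential (so that $\BXn = \BX \times \cdots \times \BX$, equivalently $\BXinf = \BX \times \BX \times \cdots$, is generated by rectangles) and where Dynkin's $\pi$--$\lambda$ theorem does the work. The infinite-sequence case is handled identically: a transposition $(i\,j)$ affects only finitely many coordinates, and finite-dimensional cylinder rectangles form a generating $\pi$-system for $\BXinf$, so the same pushforward-plus-Dynkin argument applies verbatim.
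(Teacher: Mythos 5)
Your proof is correct and follows essentially the same route as the paper: the paper records Proposition \ref{prop:exch_equiv} without a standalone proof and instead proves the weighted generalization (Proposition \ref{prop:wtd_exch_equiv}, Appendix \ref{app:wtd_exch_equiv}), whose unweighted specialization is exactly your argument---reduce to indicators of (cylinder) rectangles, use that transpositions generate $\Scal_n$ for (b)$\Rightarrow$(a), and use that rectangles generate $\BXn$ (resp.\ $\BXinf$) plus the standard machine for (a)$\Rightarrow$(b). The only difference is that you make explicit (via the pushforward measure and Dynkin's $\pi$--$\lambda$ theorem) the uniqueness step that the paper leaves implicit in the phrase ``it suffices to check\dots for sets of this form.''
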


An analogous result holds in the weighted case: checking $\lambda$-weighted
exchangeability of $X \sim Q$ is equivalent to considering \emph{weighted} swaps
of the entries of $X$. 

\begin{proposition}\label{prop:wtd_exch_equiv}
Fix $\lambda \in \Lambda^n$ (or $\lambda \in \Lambda^\infty$). For any
distribution $Q$ on $\Xcal^n$ (or on $\Xcal^\infty$), the following statements 
are equivalent: 
\begin{enumerate}
\item[(a)] $Q$ is $\lambda$-weighted exchangeable.
\item[(b)] For all $1 \leq i < j \leq n$ (or all $1\leq i <j$), and all
  measurable functions $f: \Xcal^n \to [0,\infty)$ 
  (or all measurable functions $f: \Xcal^\infty \to [0,\infty)$),
  \[
  \Ep{Q}{\frac{f(X)}{\lambda_i(X_i)\lambda_j(X_j)}} =
  \Ep{Q}{\frac{f(X^{ij})}{\lambda_i(X_i)\lambda_j(X_j)}}.
  \]
\end{enumerate}
\end{proposition}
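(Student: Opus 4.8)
The plan is to reduce everything to the unweighted pair-swap characterization of Proposition \ref{prop:exch_equiv}, applied not to $Q$ itself but to the reweighted measure $\widebar{Q}$ from Definition \ref{def:wtd_exch_finite}. By definition, $Q$ is $\lambda$-weighted exchangeable precisely when $\widebar{Q}$ is exchangeable, where in the finite case $\widebar{Q}$ has density $1/\prod_{k=1}^n\lambda_k(x_k)$ with respect to $Q$. I would first treat the finite case, $Q$ on $\Xcal^n$, and then bootstrap to the infinite case through the marginal definition.

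For the finite case, I would fix a pair $1 \le i < j \le n$ and set up an explicit change of integrand linking condition (b) to the unweighted swap identity for $\widebar{Q}$. The key observation is that the map $f \mapsto g$, where $g(x) = f(x)\prod_{k \ne i,j}\lambda_k(x_k)$, is a bijection of the nonnegative measurable functions on $\Xcal^n$ onto themselves (its inverse divides by the strictly positive, measurable factor $\prod_{k\ne i,j}\lambda_k$). Since swapping coordinates $i$ and $j$ leaves every coordinate $k \ne i,j$ untouched, we have $g(x^{ij}) = f(x^{ij})\prod_{k\ne i,j}\lambda_k(x_k)$, and unfolding $\d\widebar{Q} = \prod_{k}\lambda_k^{-1}\,\d Q$ gives
\[
\Ep{\widebar{Q}}{g(X)} = \Ep{Q}{\frac{f(X)}{\lambda_i(X_i)\lambda_j(X_j)}}, \qquad \Ep{\widebar{Q}}{g(X^{ij})} = \Ep{Q}{\frac{f(X^{ij})}{\lambda_i(X_i)\lambda_j(X_j)}}.
\]
Because $f \mapsto g$ is a bijection, ``condition (b) holds for all nonnegative measurable $f$'' is \emph{literally} the same statement as ``the unweighted swap identity holds for all nonnegative measurable $g$'', which by Proposition \ref{prop:exch_equiv} is equivalent to exchangeability of $\widebar{Q}$, i.e.\ to statement (a).

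The one point requiring care—and the main obstacle I anticipate—is measure-theoretic: Proposition \ref{prop:exch_equiv} is stated for distributions, whereas $\widebar{Q}$ is in general only a measure, possibly of infinite total mass when $\prod_k\lambda_k$ is small. I would address this by noting that $\widebar{Q}$ is $\sigma$-finite—partition $\Xcal^n$ into the sets $\{\prod_{k}\lambda_k \ge 1/m\}$, on each of which $\widebar{Q} \le m\,Q$ is finite and whose union is all of $\Xcal^n$ since each $\lambda_k$ is strictly positive—so that the monotone-class argument underlying Proposition \ref{prop:exch_equiv} applies verbatim to $\widebar{Q}$. All integrals in sight have nonnegative integrands and are thus well defined in $[0,\infty]$, so no integrability caveats arise.

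Finally, for the infinite case I would pass through the marginal definition. Statement (a) means every $Q_n$ is $(\lambda_1,\dots,\lambda_n)$-weighted exchangeable, which by the finite case just proved is equivalent to condition (b) restricted to functions $f$ of the first $n$ coordinates, for each $n$. To recover (b) for a general nonnegative measurable $f$ on $\Xcal^\infty$, I would write $f$ as an increasing limit of functions $f_m$ depending only on coordinates $1,\dots,m$ (possible since $\BXinf$ is generated by finite cylinders); for $m \ge j$ the identity for $f_m$ is a statement about $Q_m$, as both $f_m$ and the weight factor $\lambda_i(X_i)\lambda_j(X_j)$ depend only on the first $m$ coordinates, and monotone convergence then yields the identity for $f$. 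The converse direction is immediate, since any nonnegative measurable $f$ on $\Xcal^n$ extends trivially to $\Xcal^\infty$.
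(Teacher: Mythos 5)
Your finite-dimensional argument is correct, and it takes a genuinely cleaner route than the paper's: where the paper redoes the change-of-measure computation by hand for indicators of cylinder sets, you observe that the bijection $f\mapsto g=f\cdot\prod_{k\neq i,j}\lambda_k$ converts condition (b) verbatim into the unweighted pair-swap identity for $\widebar{Q}$, so that Proposition \ref{prop:exch_equiv} can be cited rather than reproved. Your $\sigma$-finiteness concern is also the right one, but your repair needs one adjustment: uniqueness of measures agreeing on a $\pi$-system requires an exhausting sequence of finite-measure sets lying \emph{inside} the $\pi$-system (here, rectangles $A_1\times\cdots\times A_n$), and your sets $\{\prod_k\lambda_k\geq 1/m\}$ are not rectangles. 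Use instead $R_m=\{\lambda_1\geq 1/m\}\times\cdots\times\{\lambda_n\geq 1/m\}$: these are rectangles, they exhaust $\Xcal^n$ since each $\lambda_k$ is strictly positive, and $\widebar{Q}(R_m)\leq m^n<\infty$.

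The genuine gap is in your passage to the infinite case. It is false that every nonnegative measurable $f$ on $\Xcal^\infty$ can be written as an increasing pointwise limit of functions depending on finitely many coordinates; generation of $\BXinf$ by cylinders does not deliver pointwise approximation. For a concrete failure, take $|\Xcal|\geq 2$, fix $x_0\in\Xcal$, and let $f=\one{A}$ for the event $A=\{x:\sum_{i=1}^\infty\one{x_i=x_0}=0\}$ (the paper's own example of an event in $\Ecal_\infty$). If $0\leq f_m\leq f$ and $f_m$ depends only on $x_1,\dots,x_m$, then $f_m(x)>0$ would force every $y$ agreeing with $x$ in its first $m$ coordinates to lie in $A$; but $y=(x_1,\dots,x_m,x_0,x_0,\dots)\notin A$. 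Hence $f_m\equiv 0$, and no increasing sequence of cylinder functions converges to $f$ on $A$. This is not a removable corner case for this paper: Proposition \ref{prop:wtd_exch_equiv} is invoked in the proof that $\Lambda_{\dF}\subseteq\Lambda_{\01}$ precisely for functions of the form $f(x)\cdot\one{x\in B_\ell}$ with $B_\ell\in\Ecal_\infty$, i.e., exactly the functions your approximation scheme cannot reach. The fix is standard and matches the spirit of your finite-case repair: for fixed $i<j$, condition (b) is the assertion that the two measures $\mu_1(A)=\Ep{Q}{\one{X\in A}/(\lambda_i(X_i)\lambda_j(X_j))}$ and $\mu_2(A)=\Ep{Q}{\one{X^{ij}\in A}/(\lambda_i(X_i)\lambda_j(X_j))}$ coincide on $\BXinf$. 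By your finite case they agree on the $\pi$-system of finite-dimensional cylinder sets $\{x:(x_1,\dots,x_n)\in B\}$ with $n\geq j$ and $B\in\BXn$; the sets $\{x:\lambda_i(x_i)\lambda_j(x_j)\geq 1/m\}$ belong to this $\pi$-system, exhaust $\Xcal^\infty$, and have $\mu_1$-mass at most $m$; so the uniqueness theorem for measures gives $\mu_1=\mu_2$ on all of $\BXinf$, and monotone convergence along simple functions upgrades indicators to general nonnegative $f$. (This uniqueness step is also what the paper itself leaves implicit when it asserts that checking cylinder sets ``suffices.'') Your converse direction in the infinite case is fine as stated.
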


We remark that an analogous result holds more generally when $Q$ is a
$\lambda$-weighted exchangeable measure, but for simplicity we state the result
in Proposition \ref{prop:wtd_exch_equiv} only for distributions. The proof of 
Proposition \ref{prop:wtd_exch_equiv} (which generalizes Proposition
\ref{prop:exch_equiv}) is deferred to Appendix \ref{app:wtd_exch_equiv}.  

\subsubsection{Conditional distributions} 

For a sequence $X$, we denote by \smash{$\widehat{P}_m$} the  empirical
distribution of the first $m$ terms in the sequence,
\[
\widehat{P}_m(A) 
= \frac{1}{m}\sum_{i=1}^m \one{X_i\in A}, \quad \textnormal{for $A \in \BX$}. 
\] 
Define $\Ecal_m$ to be the sub-$\sigma$-algebra containing events that are
symmetric in the first $m$ coordinates, that is, if $B\in\Ecal_m$ then it must 
hold that 
\[
x\in B \iff x^{ij}\in B \quad \textnormal{for all $i \neq j \in [m]$}.
\]
We think of $\Ecal_m$ as a sub-$\sigma$-algebra of either $\BXn$ or $\BXinf$, 
depending on if we are working in the finite or infinite setting; this will be
clear from context. Equivalently, we can define $\Ecal_m$ as the
$\sigma$-algebra generated by \smash{$\widehat{P}_m, X_{m+1},\dots,X_n$} in the  
finite setting, or by \smash{$\widehat{P}_m, X_{m+1},X_{m+2},\dots$} in the
infinite setting. 

The following result on conditional distributions holds for the exchangeable
case.   

\begin{proposition}\label{prop:Xi_condition_on_Ecalm_exch} 
For any exchangeable distribution $Q$ on $\Xcal^n$ (or on $\Xcal^\infty$), and
$X \sim Q$, it holds for any $1 \leq i \leq m \leq n$ (or for any $1 \leq i \leq
m$) that
\[
X_i \mid \Ecal_m \sim \widehat{P}_m.
\]
\end{proposition}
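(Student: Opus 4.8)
The plan is to verify directly that the random measure $\widehat{P}_m$ serves as the conditional distribution of $X_i$ given $\Ecal_m$. For each realization of $X$, the empirical distribution $\widehat{P}_m$ is a genuine probability measure on $\Xcal$ (an average of $m$ point masses), and it is $\Ecal_m$-measurable, since it is one of the generators of $\Ecal_m$. Because $\Xcal$ is standard Borel, a regular conditional distribution of $X_i$ given $\Ecal_m$ exists and is essentially unique; hence it suffices to show that, for every fixed $A \in \BX$,
\[
\EEst{\one{X_i \in A}}{\Ecal_m} = \widehat{P}_m(A) \quad \textnormal{almost surely},
\]
as a pathwise-probability-measure-valued kernel satisfying this identity for each $A$ is necessarily a version of that regular conditional distribution.

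By the definition of conditional expectation, and since $\widehat{P}_m(A)$ is $\Ecal_m$-measurable, the displayed identity is equivalent to
\[
\Ep{Q}{\one{X_i \in A}\one{B}} = \Ep{Q}{\widehat{P}_m(A)\one{B}}, \quad \textnormal{for all $B \in \Ecal_m$}.
\]
Expanding $\widehat{P}_m(A) = \frac{1}{m}\sum_{j=1}^m \one{X_j \in A}$, the right-hand side is the average over $j = 1, \dots, m$ of the quantities $\Ep{Q}{\one{X_j \in A}\one{B}}$. The crux is to show that each of these $m$ quantities equals the left-hand side, i.e., that the index $j$ may be freely interchanged with $i$.

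This is where exchangeability and the symmetry of $B$ enter. Fix any $j \in \{1,\dots,m\}$ with $j \neq i$ (the term $j = i$ is trivially the left-hand side), and apply Proposition \ref{prop:exch_equiv} to the nonnegative measurable function $g(x) = \one{x_i \in A}\one{x \in B}$, using the swap of coordinates $i$ and $j$. Swapping the $i\th$ and $j\th$ entries turns $\one{x_i \in A}$ into $\one{x_j \in A}$, while $B \in \Ecal_m$ is by definition invariant under transposing any two of its first $m$ coordinates; since $i, j \leq m$, this gives $g(X^{ij}) = \one{X_j \in A}\one{B}$. Proposition \ref{prop:exch_equiv} then yields $\Ep{Q}{\one{X_i \in A}\one{B}} = \Ep{Q}{\one{X_j \in A}\one{B}}$. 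Averaging over $j = 1, \dots, m$ recovers exactly the right-hand side, completing the argument. The identical reasoning applies in the infinite setting, now invoking the infinite-sequence form of Proposition \ref{prop:exch_equiv} and the infinite version of $\Ecal_m$ (generated by $\widehat{P}_m$ together with $X_{m+1}, X_{m+2}, \dots$).

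The computation is short, so there is no serious obstacle in the core argument; the only point requiring care is the measure-theoretic step at the outset---confirming that matching conditional probabilities set-by-set actually identifies the regular conditional distribution $\widehat{P}_m$, rather than merely pinning down the law of $X_i$ for each individual $A$---which is precisely what the standard Borel assumption on $\Xcal$ secures.
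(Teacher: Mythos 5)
Your proof is correct. One small remark on the framing: since you exhibit $\widehat{P}_m$ explicitly as a pathwise probability measure whose evaluations $\widehat{P}_m(A)$ are $\Ecal_m$-measurable, verifying the conditional-expectation identity for each fixed $A \in \BX$ already makes it a regular conditional distribution by definition; the standard Borel assumption is what guarantees existence/essential uniqueness in general, but is not really load-bearing in your argument, so your caution there is harmless but unnecessary.

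Your route differs from the paper's in a way worth noting. The paper never proves this proposition directly: it is obtained as the special case $\lambda_k \equiv 1$ of the weighted statement (Proposition \ref{prop:Xi_condition_on_Ecalm}), whose proof in the appendix proceeds by a permutation-sum computation against the density $\d{Q}_n / \prod_{k=1}^n \lambda_k(x_k)$ for the finite case, and then—because weighted exchangeability is intrinsically a statement about finite marginals—lifts to the infinite setting by approximating $\Ecal_m$ with the finite-coordinate $\sigma$-algebras $\Ecal_{m,n}$ and invoking Levy's Upwards Theorem. Your argument instead uses only the pairwise-swap characterization (Proposition \ref{prop:exch_equiv}) applied to $f(x) = \one{x_i \in A}\one{x \in B}$, averaged over $j = 1,\dots,m$, and—crucially—this works \emph{verbatim} in the infinite setting, since the swap identity holds for arbitrary nonnegative measurable functions on $\Xcal^\infty$ and $B \in \Ecal_m \subseteq \BXinf$ is itself swap-invariant; no martingale convergence step is needed. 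What each approach buys: yours is the elementary, self-contained textbook argument and is strictly simpler for the unweighted claim; the paper's heavier machinery is the price of proving the weighted generalization, where one cannot divide by an infinite product of weights and therefore must pass through finite marginals and a limiting argument.
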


Note that the above conditional law does not depend on $Q$. In other words, the 
distribution of $X_i \mid \Ecal_m$ is the same---it is simply the empirical
distribution on the first $m$ coordinates---for \emph{any} exchangeable
distribution $Q$.  

The next result characterizes the analogous conditional distribution for the  
terms in a weighted exchangeable sequence.   

\begin{proposition}\label{prop:Xi_condition_on_Ecalm}
For any $\lambda\in\Lambda^n$ (or $\lambda\in\Lambda^\infty$), any
$\lambda$-exchangeable distribution $Q$ on $\Xcal^n$ (or on $\Xcal^\infty$), and  
$X \sim Q$, it holds for any $1 \leq i \leq m \leq n$ (or for any $1 \leq i \leq
m$) that
\[
X_i \mid \Ecal_m \sim  \widetilde{P}_{m,i},
\]
where \smash{$\widetilde{P}_{m,i} =  \sum_{j=1}^m (w_{m,i}(X_1,\dots,X_m))_j
  \cdot \delta_{X_j}$} is the weighted empirical distribution defined in
\eqref{eq:wtd_empirical}.  
\end{proposition}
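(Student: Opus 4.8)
The plan is to verify the claimed conditional law by checking the defining identity of a regular conditional distribution. Since $\Xcal$ (hence $\Xcal^\infty$) is standard Borel, regular conditional distributions exist, and it suffices to establish
\[
\Ep{Q}{\One{X_i \in A}\,\one{B}} = \Ep{Q}{\widetilde{P}_{m,i}(A)\,\one{B}},
\quad \textnormal{for all } A \in \BX \textnormal{ and } B \in \Ecal_m.
\]
Two preliminary observations make this meaningful. First, since each $\lambda_k$ is strictly positive, the denominator $W(X) := \sum_{\sigma \in \Scal_m} \prod_{k=1}^m \lambda_k(X_{\sigma(k)})$ appearing in \eqref{eq:wtd_empirical} is strictly positive everywhere, so $\widetilde{P}_{m,i}$ is well defined. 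Second, rewriting \smash{$\widetilde{P}_{m,i}(A) = W(X)^{-1}\sum_{\sigma \in \Scal_m}\One{X_{\sigma(i)}\in A}\prod_{k=1}^m\lambda_k(X_{\sigma(k)})$}, one sees that $\widetilde{P}_{m,i}(A)$ is a symmetric function of $X_1,\dots,X_m$ that does not depend on the remaining coordinates, hence is $\Ecal_m$-measurable; thus the displayed identity indeed identifies $\widetilde{P}_{m,i}$ as the conditional law of $X_i$ given $\Ecal_m$.

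The engine of the proof is a reweighted permutation identity. Writing $X^\rho = (X_{\rho(1)},\dots,X_{\rho(m)},X_{m+1},X_{m+2},\dots)$ for $\rho\in\Scal_m$, I would show that for every $\rho\in\Scal_m$ and every nonnegative measurable $F$,
\[
\Ep{Q}{F(X)} = \Ep{Q}{F(X^\rho)\,\frac{\prod_{k=1}^m \lambda_k(X_{\rho(k)})}{\prod_{k=1}^m \lambda_k(X_k)}}.
\]
For a transposition $\rho=(ij)$ this follows from Proposition \ref{prop:wtd_exch_equiv}(b) after the substitution $f = F\cdot \lambda_i(X_i)\lambda_j(X_j)$. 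The extension to arbitrary $\rho$ follows by composing transpositions: the weight ratio above is a multiplicative cocycle in $\rho$, so iterating the transposition identity accumulates exactly the weight for the composite permutation. Crucially, only the first $m$ coordinates are moved, so the ratio involves only a finite product of weights; the identity therefore holds verbatim in both the finite and infinite settings, with no need to manipulate an infinite product of weight functions.

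With this in hand the computation is short. Expanding the right-hand side and exchanging the finite sum with the expectation gives
\[
\Ep{Q}{\widetilde{P}_{m,i}(A)\,\one{B}} = \sum_{\sigma \in \Scal_m} \Ep{Q}{\frac{\one{B}\,\One{X_{\sigma(i)}\in A}\,\prod_{k=1}^m\lambda_k(X_{\sigma(k)})}{W(X)}}.
\]
Applying the permutation identity with $\rho = \sigma^{-1}$ to the $\sigma$-th summand, and using that $\one{B}$ and $W(X)$ are invariant under permuting the first $m$ coordinates while $X_{\sigma(i)}\mapsto X_i$ and $\prod_k\lambda_k(X_{\sigma(k)})\mapsto\prod_k\lambda_k(X_k)$ under this substitution, each summand becomes \smash{$\Ep{Q}{\one{B}\,\One{X_i\in A}\,W(X)^{-1}\prod_{k=1}^m\lambda_k(X_{\sigma^{-1}(k)})}$}. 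Summing over $\sigma$ and using $\sum_{\sigma\in\Scal_m}\prod_{k=1}^m\lambda_k(X_{\sigma^{-1}(k)}) = W(X)$ collapses the weight, leaving $\Ep{Q}{\one{B}\,\One{X_i\in A}}$, which is the desired identity.

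I expect the main obstacle to be organizational rather than conceptual: carefully upgrading the pairwise-swap statement of Proposition \ref{prop:wtd_exch_equiv} to the full permutation identity over $\Scal_m$, i.e.\ verifying that the weight ratios compose correctly under products of transpositions, together with confirming the $\Ecal_m$-measurability and integrability needed to treat both sides as conditional expectations (here the bound $\prod_k\lambda_k(X_{\sigma(k)})/W(X)\le 1$ keeps every integrand bounded by $\one{B}\One{X_i\in A}$). Everything else---positivity of $W$, the finite interchange of sum and integral, and the reindexing $\sigma\mapsto\sigma^{-1}$---is routine. I note that an alternative route via the change of measure $\d\widebar{Q}/\d{Q} = 1/\prod_k\lambda_k$ combined with Proposition \ref{prop:Xi_condition_on_Ecalm_exch} works in the finite case, but it would require extra care with the normalization of the possibly infinite measure $\widebar{Q}$; the swap-based argument above avoids this and handles both settings uniformly.
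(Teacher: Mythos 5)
Your proposal is correct, and while its computational core coincides with the paper's, it is structured differently in a way worth noting. The collapse computation---expanding $\widetilde{P}_{m,i}(A)$ as a sum over $\Scal_m$, permuting each summand so that $X_{\sigma(i)}\mapsto X_i$, using the $\Ecal_m$-invariance of $\one{B}$ and of $W(X)$, and then resumming $\sum_{\sigma}\prod_k\lambda_k(X_{\sigma^{-1}(k)})=W(X)$---is exactly the paper's finite-case argument. The difference is how each of you justifies the permutation step and how the infinite case is reached. The paper works directly from Definition \ref{def:wtd_exch_finite}: it performs a change of variables under the measure $\d{Q}_n/\prod_{k=1}^n\lambda_k(x_k)$ on $\Xcal^n$, which involves the product of \emph{all} $n$ weight functions; since this device has no analogue on $\Xcal^\infty$ (there is no infinite product of weights), the paper must prove the finite case first, apply it to each marginal $Q_n$, and then recover the infinite case by a martingale limit (Levy's Upwards Theorem along the $\sigma$-algebras $\Ecal_{m,n}\uparrow\Ecal_m$). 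You instead bootstrap the pairwise-swap characterization of Proposition \ref{prop:wtd_exch_equiv}(b)---which is already stated on $\Xcal^\infty$---into a reweighting identity for arbitrary permutations of the first $m$ coordinates, via the cocycle relation $w_{\rho_2}(x^{\tau})\,w_{\tau}(x)=w_{\tau\rho_2}(x)$ and the fact that $\Scal_m$ is generated by transpositions. Because the resulting weight ratio involves only the finitely many functions $\lambda_1,\dots,\lambda_m$, your argument runs verbatim in both the finite and infinite settings, eliminating the two-step structure and the appeal to martingale convergence. What the paper's route buys is independence from Proposition \ref{prop:wtd_exch_equiv} (it uses only the definition of weighted exchangeability); what your route buys is a uniform treatment of both settings and a lighter toolkit, at the modest cost of verifying the composition-of-transpositions step, which you correctly identify as routine. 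Your measurability and integrability checks (symmetry of $\widetilde{P}_{m,i}(A)$ in the first $m$ coordinates, strict positivity and finiteness of $W(X)$, boundedness of each summand by $1$) are all sound, so I see no gap.
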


In other words, each $X_i \mid \Ecal_m$ can be viewed as a draw from a 
\emph{weighted} empirical distribution of $X_1,\dots,X_m$. The weights
\smash{$(w_{m,i}(X_1,\dots,X_m))_j$} that define this weighted empirical
distribution, as constructed in \eqref{eq:wtd_empirical}, are determined by the
functions $\lambda_1,\dots,\lambda_m$ but do not otherwise depend on the
original distribution $Q$. This is crucial, and is analogous to the unweighted
case in Proposition \ref{prop:Xi_condition_on_Ecalm_exch}.  The proof of
Proposition \ref{prop:Xi_condition_on_Ecalm} (which generalizes Proposition 
\ref{prop:Xi_condition_on_Ecalm_exch}) is deferred to Appendix
\ref{app:Xi_condition_on_Ecalm}.

\subsection{Proof of Theorem \ref{thm:main}}
\label{sec:proof_main}

We turn to the proof of our first main result, Theorem \ref{thm:main}.

\subsubsection{Proof of $\Lambda_{\01} \subseteq \Lambda_{\LLN}$}
\label{sec:proof_main1}

Let \smash{$\lambda \in \Lambda_{\01}$}, and draw $X \sim P \circ \lambda$ for 
some \smash{$P \in \Mcal_\Xcal(\lambda)$}. Fix $i \geq 1$ and $A \in
\BX$. Define weights \smash{$(w_{n,i}(x_1,\dots,x_n))_j$}, $j \in [n]$ as in
\eqref{eq:wtd_empirical}. First, by Proposition
\ref{prop:Xi_condition_on_Ecalm}, as $P\circ\lambda$ is $\lambda$-weighted
exchangeable,      
\[
\Ppst{P\circ\lambda}{X_i\in A}{\Ecal_n} \aseq
\sum_{j=1}^n (w_{n,i}(X_1,\dots,X_n))_j \cdot \one{X_j\in A} = 
\widetilde{P}_{n,i}(A).
\]
 (Note that \smash{$\widetilde{P}_{n,i}(A)$} is $\Ecal_n$-measurable
  by  definition.)
Next, fixing an arbitrary set $A\in\BX$, since $\Ecal_1 \supseteq \Ecal_2
\supseteq \dots$ with \smash{$\Ecal_\infty = \cap_{n=1}^\infty \Ecal_n$}, by
Levy's Downwards Theorem (e.g., Chapter 14.4 of
\citet{williams1991probability}), we have   
\[ 
\lim_{n\to\infty} \Ppst{P\circ\lambda}{X_i\in A}{\Ecal_n} \aseq 
\Ppst{P\circ\lambda}{X_i\in A}{\Ecal_\infty}, 
\]
or in other words,
\begin{equation}\label{eq:levy_downwards}
\lim_{n\to\infty} \widetilde{P}_{n,i}(A) \aseq 
\Ppst{P\circ\lambda}{X_i\in A}{\Ecal_\infty}.
\end{equation}
Let $Y_A$ be an $\Ecal_\infty$-measurable random variable such that \smash{$Y_A
  \aseq \Ppst{P\circ\lambda}{X_i\in A}{\Ecal_\infty}$}; the existence of such a
random variable is ensured  \citep{faden1985existence} since $\Xcal$, and hence 
$\Xcal^\infty$, is standard Borel. We thus have \smash{$\lim_{n\to\infty}
\widetilde{P}_{n,i}(A) \aseq Y_A$}. Now, recalling that we have assumed
\smash{$\lambda\in\Lambda_{\01}$}, we have   
\[
\Pp{P\circ\lambda}{Y_A \leq (P\circ\lambda_i)(A)} \in \{0,1\}. 
\] 
But by construction, \smash{$\Ep{P\circ\lambda}{Y_A} = (P\circ\lambda_i)(A)$},
so we conclude from the above display that in fact $\Pp{P\circ\lambda}{Y_A \leq
  (P\circ\lambda_i)(A)} = 1$. A similar argument leads to
$\Pp{P\circ\lambda}{Y_A \geq (P\circ\lambda_i)(A)} = 1$, and thus combining
these conclusions, we have shown that \smash{$Y_A \aseq (P\circ\lambda_i)(A)$}
and  
\[
\lim_{n\to\infty} \widetilde{P}_{n,i}(A) \aseq (P\circ\lambda_i)(A),
\]
as desired.
 
\subsubsection{Proof of $\Lambda_{\dF} \subseteq \Lambda_{\01}$}  
\label{sec:proof_main2}

Let \smash{$\lambda \in \Lambda_{\dF}$} and suppose for the sake of
contradiction that \smash{$\lambda \notin \Lambda_{\01}$}, so there is some
\smash{$P_* \in \Mcal_\Xcal(\lambda)$} and some $B_0 \in \Ecal_\infty$ such that
\smash{$p = (P_* \circ \lambda)(B_0) \in (0,1)$}. Now let $Q_0$ and $Q_1$ denote
the distribution of $X=(X_1,X_2,\dots)$ conditional on the event $B_0$ and on
the event $B_1 = B_0^c$, respectively, for \smash{$X \sim P_*\circ\lambda$}. 
Note that \smash{$P_*\circ\lambda$} can be written as a mixture,
\[
P_*\circ\lambda = p\cdot Q_0 +  (1-p)\cdot Q_1.
\]
Next we verify that $Q_\ell$ is $\lambda$-weighted exchangeable for each
$\ell=0,1$. Proposition \ref{prop:wtd_exch_equiv} tells us that to verify
$Q_\ell$ is $\lambda$-weighted exchangeable, we must only verify that
\[
\Ep{Q_\ell}{\frac{f(X)}{\lambda_i(X_i)\lambda_j(X_j)}} =
\Ep{Q_\ell}{\frac{f(X^{ij})}{\lambda_i(X_i)\lambda_j(X_j)}},
\]
for all $i \neq j$ and all measurable $f:\Xcal^\infty\to[0,\infty)$. But
since $Q_\ell$ is equal to \smash{$P_*\circ\lambda$} conditional on the event
$B_\ell$, and \smash{$(P_*\circ\lambda)(B_\ell)>0$}, it is equivalent to verify
that  
\[
\Ep{P_*\circ\lambda}{\frac{f(X)}
{\lambda_i(X_i)\lambda_j(X_j)} \cdot \one{X\in B_\ell}} =  
\Ep{P_*\circ\lambda}{\frac{f(X^{ij})}
{\lambda_i(X_i)\lambda_j(X_j)}\cdot\one{X\in B_\ell}}.
\]
Note that \smash{$X \in B_\ell$ if and only if $ X^{ij} \in B_\ell$}, because
$B_\ell\in\Ecal_\infty$, so it is equivalent to check that   
\[
\Ep{P_*\circ\lambda}{\frac{f(X)\cdot \one{X\in B_\ell}}
{\lambda_i(X_i)\lambda_j(X_j)}} = 
\Ep{P_*\circ\lambda}{\frac{f(X^{ij})\cdot\one{X^{ij}\in B_\ell}}
{\lambda_i(X_i)\lambda_j(X_j)}}.
\]
Lastly, another application of Proposition \ref{prop:wtd_exch_equiv}
 (with $f(X)\cdot \one{X\in B_\ell}$ in place of $f(X)$) tells us  
that the above display holds, since \smash{$P_*\circ\lambda$} itself is
$\lambda$-weighted exchangeable. 

Recalling that we have assumed \smash{$\lambda\in\Lambda_{\dF}$}, and having
just established that each $Q_\ell$ is $\lambda$-weighted exchangeable, we know
that there is a distribution $\mu_\ell$ on \smash{$\Mcal_\Xcal(\lambda)$} such
that \smash{$Q_\ell = (P\circ\lambda)_{\mu_\ell}$}, for each $\ell=0,1$. In
particular, writing the mixture of $\mu_0$ and $\mu_1$ as
\[
\mu = p \cdot \mu_0 + (1-p) \cdot \mu_1,
\]
we see that \smash{$P_* \circ \lambda = (P\circ\lambda)_\mu$}. We now need an 
additional lemma, whose proof is in Appendix \ref{app:mix_unique}.

\begin{lemma}\label{lem:mix_unique} 
Fix $\lambda\in\Lambda^\infty$ and let $\mu$ be a distribution on
\smash{$\Mcal_\Xcal(\lambda)$}. Suppose that \smash{$(P\circ\lambda)_\mu =
  P_*\circ\lambda$} for some \smash{$P_*\in\Mcal_\Xcal(\lambda)$}.  Then, under
$P\sim \mu$, for any $A\in\BXinf$ it holds that \smash{$(P\circ\lambda)(A)\aseq  
  (P_*\circ\lambda)(A)$}.
\end{lemma}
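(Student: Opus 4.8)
The plan is to prove the (equivalent, stronger-looking) statement that $\mu$, pushed forward under $P\mapsto P\circ\lambda$, is the point mass at $P_*\circ\lambda$; the stated conclusion then follows because a probability measure on $\Xcal^\infty$ is determined by its values on cylinder sets and finitely many almost-sure equalities may be intersected. Since each $\lambda_i$ is strictly positive, the probability measure $P\circ\lambda_1$ determines $P$ up to a positive scalar and hence determines every $P\circ\lambda_i$, so it is enough to show that $(P\circ\lambda_1)(A)$ has zero $\mu$-variance for each $A\in\BX$. What the hypothesis gives immediately is only a \emph{cross-coordinate} statement: evaluating $(P\circ\lambda)_\mu=P_*\circ\lambda$ on the first two coordinates and applying the law of total covariance (conditionally on $P$ the coordinates are independent) yields $\mathrm{Cov}_\mu\big((P\circ\lambda_1)(A),(P\circ\lambda_2)(B)\big)=0$ for all $A,B\in\BX$. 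By itself this is too weak---mutually independent but non-constant coordinates satisfy exactly this---so the real task is to manufacture a genuine single-coordinate variance.

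The key idea is to \emph{un-weight} the bivariate marginal. Writing $Z_i(P)=\int_\Xcal \lambda_i\,\d P$ and using that $P\circ\lambda_i$ has density $\lambda_i/Z_i(P)$ against $P$, dividing the bivariate identity $\int (P\circ\lambda_1)\times(P\circ\lambda_2)\,\d\mu(P)=(P_*\circ\lambda_1)\times(P_*\circ\lambda_2)$ through by $\lambda_1(x_1)\lambda_2(x_2)$ turns each weighted marginal back into $P$, so that the \emph{same} base measure now appears in both coordinates:
\[
\int \frac{P\times P}{Z_1(P)\,Z_2(P)}\,\d\mu(P)=\frac{P_*\times P_*}{Z_1(P_*)\,Z_2(P_*)}.
\]
Reducing first to the case where $\mu$ is supported on $\Pcal_\Xcal$ (legitimate for finite base measures, as $P\circ\lambda$ is scale-invariant), I would introduce the tilted mixing law $\d\mu_2(P)\propto\big(Z_1(P)Z_2(P)\big)^{-1}\d\mu(P)$; after normalizing, the display becomes $(P^2)_{\mu_2}=P_*^{\,2}$ on $\Xcal^2$, i.e.\ a bona fide mixture of bivariate i.i.d.\ laws that is itself bivariate i.i.d. Because the tilt $(Z_1Z_2)^{-1}$ is strictly positive, $\mu_2$ and $\mu$ are mutually absolutely continuous, hence have identical null sets.

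With the same $P$ now in both slots, the law of total covariance under $(P^2)_{\mu_2}$ gives $\mathrm{Cov}_{(P^2)_{\mu_2}}\big(\One{X_1\in A},\One{X_2\in B}\big)=\mathrm{Cov}_{\mu_2}\big(P(A),P(B)\big)$, while the right-hand side $P_*^{\,2}$ makes this covariance $0$ by independence of its coordinates. Thus $\mathrm{Cov}_{\mu_2}(P(A),P(B))=0$ for all $A,B$, and $B=A$ gives $\mathrm{Var}_{\mu_2}(P(A))=0$. Fixing a countable $\pi$-system generating $\BX$ (possible since $\Xcal$ is standard Borel) and intersecting the countably many almost-sure events, $P$ is $\mu_2$-a.s.\ equal to one fixed measure, which must be its mean $P_*$; by the mutual absolute continuity this holds $\mu$-a.s.\ as well, so $P\circ\lambda=P_*\circ\lambda$ for $\mu$-almost every $P$, completing the proof.

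I expect the un-weighting in the second paragraph to be the crux: it is precisely the manoeuvre that upgrades the cheap cross-coordinate uncorrelatedness into a variance statement about a single marginal, and it works only because strictly positive weights let one recover the common $P$ from any one coordinate. The remaining technical obstacle is that the setup permits base measures with $P(\Xcal)=\infty$, for which $P\times P$ is an infinite measure and the ``i.i.d.\ from $P$'' reading breaks down. I would address this by exhausting $\Xcal$ with the sets $\Xcal^{(k)}=\{x:\lambda_1(x)\wedge\lambda_2(x)\ge 1/k\}\uparrow\Xcal$, on which $P$ is automatically finite because $P(\Xcal^{(k)})\le k\,Z_1(P)<\infty$ and the un-weighting factor $1/(\lambda_1\lambda_2)$ is bounded; carrying out the (measure-theoretic) covariance computation with test sets contained in $\Xcal^{(k)}$ and then letting $k\to\infty$ by monotone convergence should recover $\mathrm{Var}_{\mu_2}(P(A))=0$ for every $A\in\BX$ in the general case as well.
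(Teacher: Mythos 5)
Your core mechanism---un-weight the two-dimensional marginal of $Q=(P\circ\lambda)_\mu$ so that both coordinates refer to a single base measure, tilt the mixing law by the resulting normalizing constants, and conclude via a second-moment/zero-variance computation---is exactly the mechanism of the paper's proof, and for $\mu$ concentrated on \emph{finite} base measures (with $P_*$ finite as well, which your normalization step also needs) your execution is correct. The one structural difference is the choice of common reference weight: you divide out by $\lambda_1(x_1)\lambda_2(x_2)$ entirely, recovering $P$ itself, whereas the paper divides out only down to $\lambda_{12}=\min\{\lambda_1,\lambda_2\}$, recovering the probability measure $P\circ\lambda_{12}$. That choice is not cosmetic; it is precisely where your proposal has a genuine gap.

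The gap is the infinite-measure case, and your proposed patch cannot work as stated, for two reasons. First, the tilted law $\mu_2$ need not exist: writing $Z_i(P)=\int_\Xcal\lambda_i(x)\,\d P(x)$, nothing makes $(Z_1Z_2)^{-1}$ $\mu$-integrable once the base measures are infinite (for probability measures you get integrability for free by evaluating the un-weighted identity at $\Xcal\times\Xcal$, but that evaluation now reads $\infty=\infty$); e.g., take $P=c\nu$ for a fixed infinite $\nu\in\Mcal_\Xcal(\lambda)$ and a random scalar $c$ with $\Ep{\mu}{c^{-2}}=\infty$. Second, and more fundamentally, the statement you aim to recover, $\mathrm{Var}_{\mu_2}(P(A))=0$ for all $A\in\BX$, is \emph{false} in general: take $P=c\nu$ with $c>0$ nondegenerate and $\Ep{\mu}{c^{-2}}<\infty$, so that $\mu_2$ does exist. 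Then $P\circ\lambda=\nu\circ\lambda$ surely, so the hypothesis holds with $P_*=\nu$ and the lemma's conclusion is trivially true, yet $P(A)=c\,\nu(A)$ has strictly positive variance under $\mu_2$ for any $A$ with $0<\nu(A)<\infty$. The underlying issue is that the hypothesis can only ever pin $P$ down up to a positive scalar; for finite measures you quotient out the scalar by passing to $\Pcal_\Xcal$, but an infinite measure admits no canonical normalization, and the exhaustion $\Xcal^{(k)}\uparrow\Xcal$ with ``monotone convergence'' does not supply one---a correct repair along those lines needs $k$-dependent tilts by $P(\Xcal^{(k)})^2/(Z_1(P)Z_2(P))$, a zero-variance argument for the normalized restrictions $P(\cdot\cap\Xcal^{(k)})/P(\Xcal^{(k)})$, and a patching of the resulting $P$- and $k$-dependent proportionality constants, none of which is in your sketch. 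This bookkeeping is exactly what the paper's choice $\lambda_{12}=\min\{\lambda_1,\lambda_2\}$ eliminates in one stroke: $P\circ\lambda_{12}$ is a probability measure for \emph{every} $P\in\Mcal_\Xcal(\lambda)$ (the scale is already quotiented out), and the tilting factor $Z_{12}(P)^2/(Z_1(P)Z_2(P))\in(0,1]$ is bounded, so the tilted mixing law always exists. Replacing your constant reference weight by $\min\{\lambda_1,\lambda_2\}$ turns your argument into a complete proof---namely, the paper's.
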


Therefore we know \smash{$(P_*\circ\lambda)(B_0) = (P\circ\lambda)(B_0)$} holds 
almost surely under $P\sim\mu$. We have now reached our desired contradiction,
because \smash{$(P_*\circ\lambda)(B_0)=p$} is nonrandom, whereas
$(P\circ\lambda)(B_0)$ is distributed as $\mathrm{Bernoulli}(p)$ under
$P\sim\mu$, by construction of $\mu = p \cdot \mu_0 + (1-p) \cdot \mu_1$.   

\subsection{Proof of Theorem \ref{thm:nec_condition}}
\label{sec:proof_nec}

Let \smash{$\lambda \in \Lambda_{\LLN}$}, and fix any \smash{$P_0 \in
  \Mcal_\Xcal(\lambda)$} and any $A \in \BX$ with $P_0(A) \in (0,1)$. We will
prove that the necessary condition \eqref{eq:nec_condition} must hold for
$P=P_0$. Fix any $c \in (0,1)$ and define a measure $P_1$ via  
\[
P_1(B) = c \cdot P_0(A\cap B) + P_0(A^c\cap B), \quad \textnormal{for $B \in
  \BX$}.   
\]
Observe that \smash{$P_1 \in \Mcal_\Xcal(\lambda)$} by construction. As 
\smash{$\lambda \in \Lambda_{\LLN}$}, by assumption we have
\[
\Pp{P_\ell\circ\lambda}{\lim_{n\to\infty} \sum_{j=1}^n 
(w_{n,1}(X_1,\dots,X_n))_j \cdot \one{X_j\in A} = 
(P_\ell\circ\lambda_1)(A)} = 1, 
\]
for each $\ell=0,1$, where recall \smash{$(w_{n,1}(X_1,\dots,X_n))_j $} is
defined as in \eqref{eq:wtd_empirical}. Next we let $E$ be the event that  
\smash{$\lim_{n\to\infty}\sum_{j=1}^n (w_{n,1}(X_1,\dots,X_n))_j \cdot
  \one{X_j\in A} = (P_0\circ\lambda_1)(A)$}. As $(P_0\circ\lambda_1)(A)
\neq (P_1\circ\lambda_1)(A)$ by construction, this implies 
\[
(P_0\circ\lambda)(E) = 1, \quad \textnormal{and} \quad (P_1\circ\lambda)(E) = 0,   
\]
and thus \smash{$\dtv(P_0\circ\lambda,P_1\circ\lambda)=1$}, where \smash{$\dtv$}
denotes total variation distance.

Next, for any $i \geq1$ and any $B\in\BX$, a straightforward calculation shows
that 
\[
(P_1\circ\lambda_i)(B) = \frac{
c\cdot (P_0\circ\lambda_i)(A\cap B) +
(P_0\circ\lambda_i)(A^c\cap B)}
{c\cdot (P_0\circ\lambda_i)(A) +
(P_0\circ\lambda_i)(A^c)},
\]
and thus
\begin{align*}
\dtv(P_0\circ\lambda_i, P_1\circ\lambda_i) 
&= \sup_{B\in\BX} \, |(P_0\circ\lambda_i)(B) - (P_1\circ\lambda_i)(B)| \\ 
&= (1-c) \cdot \frac{ (P_0\circ\lambda_i)(A) \cdot (P_0\circ\lambda_i)(A^c)}
{c\cdot (P_0\circ\lambda_i)(A) +(P_0\circ\lambda_i)(A^c)},
\end{align*}
where the last equality holds because the supremum is attained at $B=A$ (or
equivalently, at $B=A^c$). From this we can verify that
\smash{$\dtv(P_0\circ\lambda_i,P_1\circ\lambda_i)<1$}, and moreover,   
\begin{equation}\label{eq:dtv_bound}
\dtv(P_0\circ\lambda_i,P_1\circ\lambda_i)\leq (c^{-1}-1) \cdot
\min\{(P_0\circ\lambda_i)(A), (P_0\circ\lambda_i)(A^c)\}.
\end{equation}
We also know that
\[
0 = 1 - \dtv(P_0\circ\lambda,P_1\circ\lambda) = 
\prod_{i=1}^\infty (1 - \dtv(P_0\circ\lambda_i,P_1\circ\lambda_i)),
\] 
where the second equality holds by properties of the total variation
distance for product distributions. In general, for any sequence $a_1,a_2,\dots
\in [0,1)$ with finite sum \smash{$\sum_{i=1}^\infty a_i <\infty$}, it holds 
that \smash{$\prod_{i=1}^\infty (1-a_i) > 0$} (see, e.g., Theorem 4 in Section
28 of \citet{knopp1990theory}). Therefore, we must have 
\[
\sum_{i=1}^\infty \dtv(P_0\circ\lambda_i,P_1\circ\lambda_i) = \infty,
\]
and so combining this with \eqref{eq:dtv_bound}, we get
\[
\infty = \sum_{i=1}^\infty \dtv(P_0\circ\lambda_i,P_1\circ\lambda_i) 
\leq (c^{-1}-1)\cdot \sum_{i=1}^\infty \min\{(P_0\circ\lambda_i)(A) ,  
(P_0\circ\lambda_i)(A^c)\}.
\]
Since $(c^{-1}-1)$ is finite, the sum on the right-hand side must be infinite,
which completes the proof.

\subsection{Proof of Theorem \ref{thm:suff_condition}}
\label{sec:proof_suff}

\subsubsection{Outline of proof}

We begin by sketching the main ideas of the proof in order to build
intuition. In the exchangeable case, where $X\sim Q$ for an exchangeable 
distribution $Q$, de Finetti's theorem tells us that we can view $X_1,X_2,\dots$ 
as i.i.d.\ draws from some random distribution $P$ (that is, for some $P \sim 
\mu$) and we can recover this underlying distribution $P$ almost surely via the
limit \smash{$\lim_{n\to\infty} \widehat{P}_n$}, where \smash{$\widehat{P}_n$}
is the empirical distribution of $X_1,\dots,X_n$:
\[
P(A) \aseq \lim_{n\to\infty} \widehat{P}_n(A), \quad \textnormal{for all
  $A\in\BX$}.  
\]
In the weighted exchangeable case, where $X\sim Q$ for a $\lambda$-weighted  
exchangeable $Q$, we will work towards a similar conclusion. To recover the
underlying random distribution, we will take the limit of a \emph{weighted}
empirical distribution of $X_1,\dots,X_n$, denoted by \smash{$\widetilde{P}_n$},
and defined by 
\begin{equation}\label{eq:Ptilde}
\widetilde{P}_n(A) = \frac{
\sum_{i=1}^n\frac{\inf_{x\in\Xcal}\lambda_i(x)/\lambda_*(x)}
{\lambda_i(X_i)/\lambda_*(X_i)} \cdot
\one{X_i\in A}}{\sum_{i=1}^n \frac{\inf_{x\in\Xcal}\lambda_i(x)/\lambda_*(x)}
{\lambda_i(X_i)/\lambda_*(X_i)}}, \quad \textnormal{for $A\in\BX$},
\end{equation}
with $\lambda_*\in\Lambda$ chosen such that the sufficient condition
\eqref{eq:suff_condition} is satisfied. Note that by \eqref{eq:suff_condition},
for $n$ large enough the denominator in \eqref{eq:Ptilde} is positive, hence
\smash{$\widetilde{P}_n$} is well-defined. (Also, to avoid confusion, we note  
that \smash{$\widetilde{P}_n$} is different from the weighted empirical
distribution \smash{$\widetilde{P}_{n,i}$} in \eqref{eq:wtd_empirical} that is
used to define the weighted law of large numbers.)  

In the rest of the proof of Theorem \ref{thm:suff_condition}, we will first
construct a random distribution \smash{$\widetilde{P}$} that is an almost sure
limit of the weighted empirical distribution \smash{$\widetilde{P}_n$} in
\eqref{eq:Ptilde}. Then we will show that the given weighted exchangeable
distribution $Q$ is equal to the mixture distribution
\smash{$(P\circ\lambda)_\mu$}, where $\mu$ is the distribution of the random
measure \smash{$\widetilde{P}_*$} defined as
\begin{equation}\label{eqn:P_from_Ptilde}
\widetilde{P}_*(A) = \int_A \frac{\d\widetilde{P}(x)}{\lambda_*(x)}, \quad
\textnormal{for $A\in\BX$}. 
\end{equation}

\subsubsection{Constructing the limit $\widetilde{P}$ via an exchangeable
  subsequence}\label{sec:construct_Xcheck}

Our first task is to construct a distribution \smash{$\widetilde{P}$} that is an
almost sure limit of the weighted empirical distribution
\smash{$\widetilde{P}_n$} defined in \eqref{eq:Ptilde}. We begin by
approximating this weighted empirical distribution. Let \smash{$U_1,U_2,\dots
  \iidsim \mathrm{Unif}[0,1]$}, independently of $X$, and define for each
$i=1,2,\dots$,      
\begin{equation}\label{eq:Bi}
B_i = \one{U_i\leq p_i(X_i)}, \quad \textnormal{where} \quad 
p_i(x) = \frac{\inf_{x'\in\Xcal} \, \lambda_i(x')/\lambda_*(x')} 
{\lambda_i(x)/\lambda_*(x)} \in [0,1].
\end{equation}
We have
\[
\sum_{i=1}^\infty p_i(X_i) 
= \sum_{i=1}^\infty \frac{\inf_{x\in\Xcal} \, \lambda_i(x)/\lambda_*(x)}
{\lambda_i(X_i)/\lambda_*(X_i)}
\geq \sum_{i=1}^\infty\frac{\inf_{x\in\Xcal} \, \lambda_i(x)/\lambda_*(x)} 
{\sup_{x\in\Xcal} \, \lambda_i(x)/\lambda_*(x)} = \infty,
\]
where the last step holds by \eqref{eq:suff_condition}. Thus, defining
\[
M = \sum_{i=1}^\infty B_i,
\]
by the second Borel--Cantelli Lemma we see that $M = \infty$ almost surely. Now
define 
\begin{equation}\label{eq:Pbar_n_wtd}
\widebar{P}_n(A) = \frac{\sum_{i=1}^n B_i \cdot \one{X_i\in A}}{\sum_{i=1}^n 
  B_i}, \quad \textnormal{for $A \in \BX$},
\end{equation}
which, almost surely, is well-defined for sufficiently large $n$, because
\smash{$M \aseq \infty$}. This turns out to be a helpful approximation for the  
weighted empirical distribution in \eqref{eq:Ptilde}, with almost sure equality
in the limit, as we state next. The proof of this result is deferred to Appendix 
\ref{app:Pbar_equiv}. 

\begin{lemma}\label{lem:Pbar_equiv}
Under the notation and assumptions above, it holds that
\[
\limsup_{n\to\infty} \, \big|\widebar{P}_n(A)-\widetilde{P}_n(A)\big|
\aseq 0, \quad \textnormal{for all $A\in\BX$}. 
\]
\end{lemma}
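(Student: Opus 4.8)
The plan is to exploit the fact that the Bernoulli variables $B_i = \One{U_i \le p_i(X_i)}$ were constructed so that, conditionally on $X$, they are independent with $\EEst{B_i}{X} = p_i(X_i)$, which is exactly the weight appearing in both the numerator and denominator of $\widetilde{P}_n$ in \eqref{eq:Ptilde}. Thus $\widebar{P}_n(A)$ is a sampled version of $\widetilde{P}_n(A)$, and the lemma should follow from a strong law of large numbers for independent but non-identically-distributed summands, applied conditionally on $X$ and then integrated out. To set this up I would condition on $X$ throughout and abbreviate $q_i = p_i(X_i)$, $S_n = \sum_{i=1}^n B_i$, $T_n = \sum_{i=1}^n q_i$, together with the $A$-restricted versions $S_n^A = \sum_{i=1}^n B_i \One{X_i\in A}$ and $T_n^A = \sum_{i=1}^n q_i \One{X_i\in A}$, so that $\widebar{P}_n(A) = S_n^A/S_n$ and $\widetilde{P}_n(A) = T_n^A/T_n$. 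The computation displayed just before the lemma already gives $T_n\to\infty$ almost surely, so all denominators are eventually positive.

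Conditionally on $X$, the summands $Z_i = B_i - q_i$ and $W_i = (B_i - q_i)\One{X_i\in A}$ are independent, mean zero, and have conditional variance at most $q_i$, since $\mathrm{Var}(B_i\mid X) = q_i(1-q_i)\le q_i$. The core estimate is the summability of $\sum_i q_i/T_i^2$: because $q_i = T_i - T_{i-1}$ and $T_i$ is nondecreasing, on the tail where $T_{i-1}>0$ one has $q_i/T_i^2 \le (T_i-T_{i-1})/(T_{i-1}T_i) = 1/T_{i-1} - 1/T_i$, so the series telescopes and is finite. Consequently $\sum_i \mathrm{Var}(Z_i/T_i \mid X)$ and $\sum_i \mathrm{Var}(W_i/T_i \mid X)$ are both finite, and Kolmogorov's one-series theorem gives almost sure (conditional on $X$) convergence of $\sum_i Z_i/T_i$ and $\sum_i W_i/T_i$. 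Since $T_n\uparrow\infty$, Kronecker's lemma then yields $(S_n - T_n)/T_n \asto 0$ and $(S_n^A - T_n^A)/T_n \asto 0$ conditionally on $X$.

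Finally I would combine these two facts. Writing $\widebar{P}_n(A) = (S_n^A/T_n)/(S_n/T_n)$, the denominator tends to $1$ and the numerator equals $T_n^A/T_n + o(1) = \widetilde{P}_n(A) + o(1)$, with $\widetilde{P}_n(A)\in[0,1]$ bounded; a short rearrangement then gives $\widebar{P}_n(A) - \widetilde{P}_n(A) \asto 0$ conditionally on $X$. Integrating over $X$ removes the conditioning and delivers the unconditional almost sure convergence, which is equivalent to the stated $\limsup$ form. I expect the main obstacle to be the summability bound $\sum_i q_i/T_i^2 < \infty$ combined with the care required because the normalization $T_n$ is itself random and the weights $q_i$ are neither bounded below nor identically distributed; the telescoping estimate is precisely what lets the conditional law of large numbers go through cleanly.
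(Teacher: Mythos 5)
Your proposal is correct, and while it runs on the same probabilistic engine as the paper's proof --- a strong law for the conditionally independent Bernoulli variables $B_i$ given $X$, with the variance summability $\sum_i p_i(X_i)/T_i^2<\infty$ --- its structure is genuinely different. The paper splits into two cases according to whether $\sum_{i=1}^\infty p_i(X_i)\cdot\one{X_i\in A}$ diverges. In the divergent case it uses the multiplicative decomposition
\[
\widebar{P}_n(A) = \widetilde{P}_n(A)\cdot\frac{\sum_{i=1}^n p_i(X_i)}{\sum_{i=1}^n B_i}\cdot\frac{\sum_{i=1}^n B_i\cdot\one{X_i\in A}}{\sum_{i=1}^n p_i(X_i)\cdot\one{X_i\in A}},
\]
and cites a ratio strong law of Petrov (checking exactly the variance condition you establish by telescoping) to show that both ratio factors tend to one; in the convergent case it needs a separate argument via the first and second Borel--Cantelli lemmas to show that $\widebar{P}_n(A)$ and $\widetilde{P}_n(A)$ both vanish. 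Your additive decomposition removes this case split: because you normalize both $S_n-T_n$ and $S_n^A-T_n^A$ by the full-sum normalizer $T_n$, which diverges surely by \eqref{eq:suff_condition}, the degenerate regime where $T_n^A$ stays bounded is absorbed automatically ($T_n^A/T_n\to 0$ forces $S_n^A/T_n\to 0$, so both empirical measures vanish together), and the final rearrangement only needs $\widetilde{P}_n(A)\le 1$. So the paper's route buys modularity --- one citation to a standard ratio SLLN --- while yours buys a uniform, self-contained argument (Kolmogorov's one-series theorem plus Kronecker's lemma) with no case analysis and no Borel--Cantelli step. Two small points to spell out in a full write-up: start all series at $i_*=\min\{i: p_i(X_i)>0\}$ so that division by $T_{i-1}$ and $T_i$ is legitimate (the paper does the same when checking Petrov's condition), and note that $B_i=0$ almost surely whenever $p_i(X_i)=0$, so the discarded initial terms contribute nothing to $S_n-T_n$ or $S_n^A-T_n^A$.
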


Next we define an infinite subsequence \smash{$\check{X}$} of the original
sequence $X$. Define $I_0=0$, and 
\begin{equation}\label{eq:Im}
I_m = \min\{i > I_{m-1} : B_i = 1\}, \quad \textnormal{for $m=1,\dots,M$}.
\end{equation}
In other words, $I_1<I_2<\dots$ enumerates all indices $i$ for which $B_i=1$. 
Then define 
\begin{equation}\label{eq:Xcheck}
\check{X} = 
\begin{cases}
(X_{I_1},X_{I_2},X_{I_3},\dots) & \textnormal{if $M=\infty$}, \\
(X_{I_1},\dots,X_{I_M},x_0,x_0\dots) & \textnormal{otherwise},
\end{cases}
\end{equation}
where $x_0\in\Xcal$ is some fixed value. In other words, in the case $M<\infty$,
we augment the subsequence \smash{$(X_{I_m})_{m=1}^M$} with an infinite string 
$(x_0,x_0,\dots)$ (but of course, since $M=\infty$ almost surely, this occurs
with probability zero).  Critically, this construction results in
\smash{$\check{X}$} being exchangeable. The proof of the lemma below is   
deferred to Appendix \ref{app:exch_inf_subseq}.   

\begin{lemma}\label{lem:exch_inf_subseq}
Under the notation and assumptions above, the sequence \smash{$\check{X}$}
that is defined in \eqref{eq:Xcheck} has an exchangeable distribution. 
\end{lemma}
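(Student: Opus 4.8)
The plan is to establish exchangeability of $\check X$ directly through the pairwise-swap characterization in Proposition \ref{prop:exch_equiv}: it suffices to show that $\EE{f(\check X)} = \EE{f(\check X^{kl})}$ for every $1 \leq k < l$ and every measurable $f : \Xcal^\infty \to [0,\infty)$, where the expectation runs over both $X \sim Q$ and the auxiliary uniforms $U_1,U_2,\dots$. By a standard reduction it is enough to treat $f$ depending on the first $m \geq l$ coordinates, say $f(\check X) = g(\check X_1,\dots,\check X_m)$. Since $M \aseq \infty$, the first $m$ accepted indices are almost surely well-defined, so I would partition the probability space according to their values. Writing $E(i_1,\dots,i_m)$ for the event that the first $m$ accepted indices are exactly $i_1 < \dots < i_m$ (equivalently, $B_{i_r}=1$ for all $r$ and $B_j = 0$ for every $j \leq i_m$ with $j \notin \{i_1,\dots,i_m\}$), and noting $\sum_{i_1<\dots<i_m}\one{E(i_1,\dots,i_m)} \aseq 1$, I would write
\[
\EE{g(\check X_1,\dots,\check X_m)} = \sum_{i_1 < \dots < i_m} \EE{g(X_{i_1},\dots,X_{i_m})\cdot\one{E(i_1,\dots,i_m)}}.
\]

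Next I would integrate out the independent uniforms. Conditional on $X$, the factor $\one{E(i_1,\dots,i_m)}$ has expectation $\prod_{r=1}^m p_{i_r}(X_{i_r}) \prod_{j \in R}(1-p_j(X_j))$, where $R$ denotes the set of rejected positions below $i_m$. The crucial algebraic point is the identity $\lambda_i(x)\,p_i(x) = c_i\,\lambda_*(x)$ with $c_i = \inf_{x'}\lambda_i(x')/\lambda_*(x')$, which is exactly how the acceptance probabilities in \eqref{eq:Bi} were designed: accepting coordinate $i$ replaces its weight $\lambda_i$ by the \emph{common} weight $\lambda_*$. Pulling out the symmetric constant $\prod_r c_{i_r}$, each summand becomes $(\prod_r c_{i_r})\cdot\Ep{Q}{h}$ with
\[
h(x) = g(x_{i_1},\dots,x_{i_m})\prod_{r=1}^m \frac{\lambda_*(x_{i_r})}{\lambda_{i_r}(x_{i_r})}\prod_{j \in R}(1-p_j(x_j)).
\]

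The heart of the argument is to apply the weighted-swap characterization (Proposition \ref{prop:wtd_exch_equiv}) to the \emph{original} indices $i_k$ and $i_l$. Factoring $h(x) = \tilde h(x)/(\lambda_{i_k}(x_{i_k})\lambda_{i_l}(x_{i_l}))$, where $\tilde h$ absorbs the two factors $\lambda_*(x_{i_k})\lambda_*(x_{i_l})$, Proposition \ref{prop:wtd_exch_equiv} gives $\Ep{Q}{h(X)} = \Ep{Q}{\tilde h(X^{i_k i_l})/(\lambda_{i_k}(X_{i_k})\lambda_{i_l}(X_{i_l}))}$. I would then check that applying the swap $X^{i_k i_l}$ reproduces $h$ with $g$ replaced by $g^{(kl)}$, the function $g$ with its $k$th and $l$th arguments interchanged: the symmetric factor $\lambda_*(x_{i_k})\lambda_*(x_{i_l})$ is untouched, the accepted-position factors indexed by $r \neq k,l$ are untouched, and---this is the key bookkeeping point---the rejection factors over $R$ are untouched because $R$ is disjoint from the accepted set $\{i_1,\dots,i_m\}$, so the swapped coordinates $i_k,i_l$ never appear in $\prod_{j\in R}$. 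Hence $\Ep{Q}{h} = \Ep{Q}{h^{(kl)}}$, each summand equals its $g^{(kl)}$ counterpart, and summing back over $i_1 < \dots < i_m$ recovers $\EE{g^{(kl)}(\check X_1,\dots,\check X_m)} = \EE{g(\check X^{kl}_1,\dots,\check X^{kl}_m)}$, which completes the swap invariance.

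I expect the main obstacle to be precisely this last bookkeeping step: verifying that a transposition of the two original coordinates $i_k, i_l$ translates into exactly a transposition of the $k$th and $l$th entries of $\check X$, while leaving the constants $c_{i_r}$, the remaining reweighting factors, and all rejection factors invariant. Once the disjointness of accepted and rejected positions is used to see that the $R$-factors are inert under the swap, everything lines up, and it is the common weight $\lambda_*$ (rather than the index-dependent $\lambda_i$) at the accepted coordinates that makes the resulting expression symmetric in the accepted values. A minor additional point to address is that $c_i$ may vanish for some $i$, but then $p_i \equiv 0$ and such an index is never accepted, so the corresponding terms simply drop out and cause no difficulty.
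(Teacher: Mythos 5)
Your proposal is correct and takes essentially the same approach as the paper's own proof: reduce to pairwise swaps via Proposition \ref{prop:exch_equiv}, decompose over the realized accepted indices, integrate out the uniforms so that the identity $\lambda_i(x)\,p_i(x)=c_i\,\lambda_*(x)$ converts each accepted coordinate's weight into the common $\lambda_*$, and then apply Proposition \ref{prop:wtd_exch_equiv} at the original indices $i_k,i_l$, with the rejection factors inert under the swap. The only cosmetic difference is that the paper states the decomposition as a conditional identity given $\{I_1=i_1,\dots,I_n=i_n\}$ (restricted to positive-probability patterns, which also handles your $c_i=0$ remark) rather than as a sum over the partition.
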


Why are we interested in \smash{$\check{X}$}? This will become more apparent
once we inspect the empirical distribution of its first $m$ elements: for $m \geq
1$, define 
\begin{equation}\label{eq:Pcheck_m}
\check{P}_m(A) = \frac{\sum_{i=1}^m\one{\check{X}_i \in A}}{m}, \quad
\textnormal{for $A \in \BX$}.
\end{equation}
Observe that, for any $n \geq 1$, denoting  \smash{$M_n=\sum_{i=1}^n B_i$}, if
$M_n>0$ then \smash{$\widebar{P}_n(A)= \check{P}_{M_n}(A)$}, where
\smash{$\widebar{P}_n$} is defined in \eqref{eq:Pbar_n_wtd} above.  
Combining this with Lemma \ref{lem:Pbar_equiv}, together with the fact that
\smash{$M=\lim_{n\to\infty} M_n$}, we see that
\begin{equation}\label{eq:Pcheck_limit}
\textnormal{if $M=\infty$ and $\lim_{m\to\infty}\check{P}_m(A) $
exists, then $\lim_{n\to\infty}\widetilde{P}_n(A)$ exists and is equal
to the same value} 
\end{equation}
holds almost surely, for all $A\in\BX$.

Exchangeability of the distribution of \smash{$\check{X}$} now allows us to
apply the original de Finetti--Hewitt--Savage theorem
(Theorem \ref{thm:definetti}) and the law of large numbers (Theorem
\ref{thm:lln}) to assert the existence of a random distribution
\smash{$\widetilde{P} \in \Pcal_\Xcal$} such that \smash{$\check{P}_m$} (and
thus also \smash{$\widetilde{P}_n$}) converges to \smash{$\widetilde{P}$}. To be
more specific, the de Finetti--Hewitt--Savage theorem tells us that we can
represent a draw from the distribution of \smash{$\check{X}$} as follows: we
first sample a random distribution \smash{$\widetilde{P}$}, then we sample
components \smash{$\check{X}_i$}, $i=1,2,\dots$ independently from 
\smash{$\widetilde{P}$}. The law of large numbers tells us that
\smash{$\widetilde{P}$} can be recovered by taking the limit of the empirical    
distribution \smash{$\check{P}_m$}. Thus, 
\[
\textnormal{there exists a random distribution $\widetilde{P}\in\Pcal_\Xcal$ 
such that $\lim_{m\to\infty}\check{P}_m(A) \aseq \widetilde{P}(A)$,
for all $A\in\BX$},  
\] 
and consequently, combining this with \eqref{eq:Pcheck_limit} along with the
fact that \smash{$M\aseq\infty$}, 
\begin{equation}\label{eqn:Ptilde_is_limit}
\textnormal{there exists a random distribution $\widetilde{P}\in\Pcal_\Xcal$
such that $\lim_{n\to\infty}\widetilde{P}_n(A) \aseq \widetilde{P}(A)$,
for all $A\in\BX$}. 
\end{equation}  
Next, for $n \geq 1$, we define $\Fcal_n = \sigma(X_{n+1},X_{n+2},\dots)$, then 
define \smash{$\Ftail = \cap_{n=0}^\infty \, \Fcal_n$}, which is called the
\emph{tail $\sigma$-algebra} corresponding to the infinite sequence $X$.   
The next lemma, proved in Appendix \ref{app:proof_lem:Ftail_measurable},
establishes that \smash{$\widetilde{P}$} can be recovered via
$\Ftail$-measurable random variables. This will be important later on. 

\begin{lemma}\label{lem:Ftail_measurable}
Under the notation and assumptions above, for any measurable function
$f:\Xcal\to[0,\infty)$, there exists an $\Ftail$-measurable function
\smash{$g:\Xcal^\infty\to[0,\infty]$} such that 
\[
g(X)\aseq  \Ep{X'\sim\widetilde{P}}{f(X')}.
\]
\end{lemma}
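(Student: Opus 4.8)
The plan is to prove the lemma first for indicator functions, and then lift to general nonnegative measurable $f$ by linearity and monotone convergence. The crux is the indicator case: I would show that for each $A\in\BX$ the random value $\widetilde{P}(A)=\Ep{X'\sim\widetilde{P}}{\one{A}(X')}$ is almost surely equal to a genuinely $\Ftail$-measurable function of $X$. Note first that although $\widetilde{P}$ was produced through $\check{X}$ (whose construction also uses the auxiliary variables $U_i$), \eqref{eqn:Ptilde_is_limit} identifies $\widetilde{P}(A)\aseq\lim_{n\to\infty}\widetilde{P}_n(A)$, and the right-hand side depends on $X$ alone; so $\widetilde{P}(A)$ may be taken $\sigma(X)$-measurable, making the statement well-posed.

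For fixed $A$, recall from \eqref{eq:Ptilde} and \eqref{eq:Bi} that the weights in $\widetilde{P}_n$ are exactly $p_i(X_i)$, so that $\widetilde{P}_n(A)=\big(\sum_{i=1}^n p_i(X_i)\one{X_i\in A}\big)\big/\big(\sum_{i=1}^n p_i(X_i)\big)$, and that $\sum_{i=1}^\infty p_i(X_i)=\infty$ (shown above, as a consequence of \eqref{eq:suff_condition}) for every realization of $X$. For each $k\ge 1$ define the truncated ratio
\[
\widetilde{P}^{(k)}_n(A)=\frac{\sum_{i=k+1}^n p_i(X_i)\one{X_i\in A}}{\sum_{i=k+1}^n p_i(X_i)},
\]
which is $\Fcal_k$-measurable. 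Since the first $k$ terms contribute fixed finite amounts to both the numerator and denominator of $\widetilde{P}_n(A)$ while the denominator diverges, I would show $\widetilde{P}_n(A)-\widetilde{P}^{(k)}_n(A)\to 0$; hence $\widetilde{P}^{(k)}_n(A)$ shares the limit of $\widetilde{P}_n(A)$, and $h_k:=\limsup_{n\to\infty}\widetilde{P}^{(k)}_n(A)$ satisfies $h_k\aseq\widetilde{P}(A)$ while being $\Fcal_k$-measurable.

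To combine these into one tail-measurable object, set $g_A:=\limsup_{k\to\infty}h_k$. For every fixed $m$ and all $k\ge m$ we have $\Fcal_k\subseteq\Fcal_m$, so each such $h_k$ is $\Fcal_m$-measurable; as $\limsup_{k\to\infty}h_k$ depends only on the tail $(h_k)_{k\ge m}$, it is $\Fcal_m$-measurable, and since $m$ is arbitrary, $g_A$ is $\Ftail$-measurable. Because $h_k\aseq\widetilde{P}(A)$ for every $k$, on the complement of a single null set $h_k=\widetilde{P}(A)$ for all $k$, whence $g_A=\widetilde{P}(A)$; thus $g_A\aseq\widetilde{P}(A)$, settling the case $f=\one{A}$. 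By linearity, for a simple $f=\sum_{l=1}^L c_l\one{A_l}$ with $c_l\ge0$ the function $\sum_{l}c_l\,g_{A_l}$ is $\Ftail$-measurable and a.s.\ equals $\int f\,\d\widetilde{P}$. For general measurable $f:\Xcal\to[0,\infty)$, take simple $f_L\uparrow f$, let $g_L$ be the corresponding $\Ftail$-measurable functions, and set $g:=\limsup_{L\to\infty}g_L$, again $\Ftail$-measurable; by the monotone convergence theorem applied to the random measure $\widetilde{P}$, $\int f_L\,\d\widetilde{P}\uparrow\int f\,\d\widetilde{P}$ almost surely, so $g\aseq\int f\,\d\widetilde{P}=\Ep{X'\sim\widetilde{P}}{f(X')}\in[0,\infty]$, as required.

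The main obstacle is the first step, namely the $\Ftail$-measurability of $\widetilde{P}(A)$: since $\widetilde{P}_n(A)$ genuinely depends on the leading coordinates, one must verify that their influence washes out in the limit, and this is precisely where the divergence $\sum_i p_i(X_i)=\infty$ (guaranteed by the sufficient condition) is essential. It is also the reason it is cleaner to pass through the truncated quantities $\widetilde{P}^{(k)}_n$ and take a second limit in $k$, rather than trying to argue directly that $\limsup_{n}\widetilde{P}_n(A)$ is tail-measurable.
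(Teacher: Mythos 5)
Your proposal is correct and follows essentially the same route as the paper's proof: reduce to indicators, compare $\widetilde{P}_n(A)$ with the truncated, $\Fcal_k$-measurable ratios (the paper's $T_{k,n}$), and use the sure divergence of $\sum_i p_i(X_i)$ guaranteed by \eqref{eq:suff_condition} to show the head terms wash out, yielding tail-measurability. The only differences are cosmetic: you lift from indicators via simple functions and monotone convergence where the paper uses the layer-cake formula, and you package the conclusion as a double $\limsup$ (over $n$, then $k$) where the paper notes that $\limsup_n \widetilde{P}_n(C)$ surely coincides with $\limsup_n T_{m,n}$ for every $m$.
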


\subsubsection{Representing $Q$ as a mixture distribution}

In the last part of the proof, we construct a random measure
\smash{$\widetilde{P}_*\in\Mcal_\Xcal$} as specified in
\eqref{eqn:P_from_Ptilde}, with \smash{$\widetilde{P}$} the distribution
constructed in the last subsection, from \eqref{eqn:Ptilde_is_limit}. We will
show that the distribution $Q$ of interest is equal to the mixture
\smash{$\widetilde{P}_*\circ\lambda$}. However, we have not established that
\smash{$\widetilde{P}_*\circ\lambda$} is always well-defined, so we will need to
treat this carefully.   

We will need an additional lemma, which we prove in
Appendix \ref{app:proof_lem:check_P_circ_lambda_k}. 

\begin{lemma}\label{lem:check_P_circ_lambda_k}
Under the notation and assumptions above, for all $k\geq 1$ and all $A\in\BX$, 
\begin{equation}\label{eq:lambdak_defined}
\int_\Xcal\lambda_k(x) \, \d \widetilde{P}_*(x) \in (0,\infty) \quad 
\textnormal{almost surely},
\end{equation}
and also
\begin{equation}\label{eq:P_lambdak_conditional}
\Ppst{Q}{X_k\in A}{X_{-k}} \aseq (\widetilde{P}_*\circ\lambda_k)(A),
\end{equation}
where we note that \smash{$(\widetilde{P}_*\circ\lambda_k)$} is well-defined
almost surely, by the first claim \eqref{eq:lambdak_defined}. 
\end{lemma}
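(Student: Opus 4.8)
The plan is to identify the conditional law of $X_k$ given the remaining coordinates $X_{-k}$ by combining the weighted-swap characterization (Proposition \ref{prop:wtd_exch_equiv}) with the almost sure convergence $\widetilde{P}_n \to \widetilde{P}$ from \eqref{eqn:Ptilde_is_limit}, and to read off both \eqref{eq:lambdak_defined} and \eqref{eq:P_lambdak_conditional} from a single reweighting identity. Concretely, for bounded measurable $\phi:\Xcal\to[0,\infty)$ I aim to establish
\[
\Epst{Q}{\frac{\phi(X_k)}{\lambda_k(X_k)}}{X_{-k}} \aseq H\cdot\int_\Xcal \phi \, \d\widetilde{P}_{*}, \qquad H := \Epst{Q}{\frac{\lambda_*(X_k)}{\lambda_k(X_k)}}{X_{-k}},
\]
where $\widetilde{P}_{*}$ is as in \eqref{eqn:P_from_Ptilde}, using that $\int \phi\,\d\widetilde{P}_{*} = \int (\phi/\lambda_*)\,\d\widetilde{P}$.

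To produce this identity I would first apply Proposition \ref{prop:wtd_exch_equiv} to a single pairwise swap of coordinate $k$ with a coordinate $j\ne k$: taking the test function to be $\phi(x_k)\lambda_*(x_j)\,\eta$, where $\eta$ depends only on the coordinates other than $k$ and $j$, gives for each fixed $j$
\[
\Ep{Q}{\frac{\phi(X_k)}{\lambda_k(X_k)}\cdot\frac{\lambda_*(X_j)}{\lambda_j(X_j)}\cdot\eta} = \Ep{Q}{\frac{\lambda_*(X_k)}{\lambda_k(X_k)}\cdot\frac{\phi(X_j)}{\lambda_j(X_j)}\cdot\eta}.
\]
I then multiply by $c_j = \inf_{x}\lambda_j(x)/\lambda_*(x)$ and sum over $j\le n$, $j\ne k$; since the coordinate-$k$ factors pull out of the sum, the $j$-sums assemble exactly into the (unnormalized) weighted empirical distribution of \eqref{eq:Ptilde}. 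The left side accumulates the normalizer $D_n = \sum_{j\le n} c_j \lambda_*(X_j)/\lambda_j(X_j)$, while the right accumulates $D_n\cdot\int(\phi/\lambda_*)\,\d\widetilde{P}_n$. Letting $n\to\infty$ and using $\widetilde{P}_n\to\widetilde{P}$ (so that $\int(\phi/\lambda_*)\,\d\widetilde{P}_n \to \int\phi\,\d\widetilde{P}_{*}$, which is tail-measurable by Lemma \ref{lem:Ftail_measurable}), and allowing $\eta$ to range over bounded $\Ftail$-measurable functions and finite-coordinate cylinders (with the swap index $j$ chosen beyond the support of the cylinder, so $\eta$ stays admissible in the swap), should yield the displayed identity.

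With the reweighting identity in hand, both conclusions follow cleanly. Taking $\phi = \lambda_k\wedge C$ and sending $C\to\infty$ by monotone convergence gives $H\cdot\int_\Xcal \lambda_k\,\d\widetilde{P}_{*} \aseq \Epst{Q}{1}{X_{-k}} = 1$; since $H>0$ almost surely (it is the conditional expectation of a strictly positive variable) and $\int_\Xcal\lambda_k\,\d\widetilde{P}_{*}>0$ (the integrand is strictly positive and $\widetilde{P}$ is a probability measure), a product of two positive quantities equal to $1$ forces both factors into $(0,\infty)$, which is exactly \eqref{eq:lambdak_defined}. Substituting $\phi = \lambda_k\psi$ for bounded $\psi\ge 0$ then gives $\Epst{Q}{\psi(X_k)}{X_{-k}} \aseq H\int \lambda_k\psi\,\d\widetilde{P}_{*} = \int\psi\,\d(\widetilde{P}_{*}\circ\lambda_k)$, which is \eqref{eq:P_lambdak_conditional}.

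The main obstacle is the passage to the limit in the summed swap identity: the coordinate-$k$ factor is multiplied by the random, $X_{-k}$-measurable normalizer $D_n\to\infty$, so one cannot simply divide through by $D_n$ inside the expectation. The delicate point is that the admissible $\eta$ for a swap at index $j$ must not depend on $X_j$, yet $D_n$ and $\widetilde{P}_n$ are built precisely from the coordinates being averaged; the saving feature is that each single coordinate contributes only a vanishing $O(1/D_n)$ share to $\widetilde{P}_n$, so its self-interaction is asymptotically negligible. I expect the cleanest rigorous route to this limit is to bypass the raw sums and instead invoke the exact finite-$m$ conditional law $X_k\mid\Ecal_m\sim\widetilde{P}_{m,k}$ from Proposition \ref{prop:Xi_condition_on_Ecalm}, pass $m\to\infty$ by Levy's downward theorem to obtain $\widetilde{P}_{m,k}(A)\to\Ppst{Q}{X_k\in A}{\Ecal_\infty}$ with no normalization issue, and then identify this limit with $(\widetilde{P}_{*}\circ\lambda_k)(A)$ by means of $\widetilde{P}_n\to\widetilde{P}$; the remaining work is to confirm that conditioning on $\Ecal_\infty$ (equivalently, on the tail) already pins down the law of $X_k$ given $X_{-k}$.
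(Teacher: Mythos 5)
Your endgame is sound: \emph{if} the reweighting identity
$\Epst{Q}{\phi(X_k)/\lambda_k(X_k)}{X_{-k}} \aseq H\cdot\int_\Xcal \phi\,\d\widetilde{P}_*$, with
$H = \Epst{Q}{\lambda_*(X_k)/\lambda_k(X_k)}{X_{-k}}$, were established (allowing values in $[0,\infty]$), then taking $\phi=\lambda_k\wedge C$ with $C\to\infty$ and then $\phi=\lambda_k\psi$ does deliver both \eqref{eq:lambdak_defined} and \eqref{eq:P_lambdak_conditional}. The genuine gap is that the identity itself is never proved, and both of your routes to it fail as stated. The primary route breaks exactly where you say it does: after summing the per-$j$ swap identities, both sides carry the random normalizer $D_n$, which is built from the very coordinates being averaged and diverges almost surely, and an equality $\Ep{Q}{A\,D_n\,\eta}=\Ep{Q}{B_n\,D_n\,\eta}$ cannot be divided through by $D_n$ inside the expectation. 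The paper resolves this obstacle with a device your proposal lacks: it inserts a random partition of unity $1=\sum_{i=\ell+1}^n(\cdots)\big/\big(\sum_{j=\ell+1}^n(\cdots)\big)$ \emph{inside} the expectation before swapping, so that no after-the-fact division is needed; the price is perturbation terms $\Delta_i$ in the denominators (the swap disturbs the normalizer), and these are controlled only because the argument is first run for a \emph{special} index $k$ with $0<\inf_x\lambda_k(x)/\lambda_*(x)\le\sup_x\lambda_k(x)/\lambda_*(x)<\infty$ (Lemma \ref{lem:cond_Ptilde}), and the claims for every other index $\ell$ are then obtained by a second bootstrap stage that swaps $\ell$ against this special $k$ and uses the already-proved case. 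This special-$k$-then-bootstrap structure is not cosmetic: for a general index the ratio $\lambda_k/\lambda_*$ need not be bounded away from $0$ or $\infty$, \eqref{eq:lambdak_defined} is no longer automatic, and the error estimates degenerate.

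Your fallback route does not close the gap either, for two reasons. First, Proposition \ref{prop:Xi_condition_on_Ecalm} plus Levy's downward theorem gives $\widetilde{P}_{m,k}(A)\to\Ppst{Q}{X_k\in A}{\Ecal_\infty}$ for the \emph{permutation-weighted} empirical measure of \eqref{eq:wtd_empirical}, which is a different object from the $\widetilde{P}_n$ of \eqref{eq:Ptilde}; the convergence $\widetilde{P}_n\to\widetilde{P}$ says nothing direct about $\lim_m\widetilde{P}_{m,k}(A)$, and identifying that limit with $(\widetilde{P}_*\circ\lambda_k)(A)$ is precisely the mixture form of the weighted law of large numbers for $Q$ --- a statement you cannot invoke here without circularity, since it sits downstream of $\lambda\in\Lambda_{\dF}$, which is what Lemma \ref{lem:check_P_circ_lambda_k} is being used to prove. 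Second, the step you defer as ``remaining work,'' namely that $\Ppst{Q}{X_k\in A}{X_{-k}}\aseq\Ppst{Q}{X_k\in A}{\Ecal_\infty}$, is a conditional-independence property of exactly the kind the de Finetti representation is supposed to deliver; for a general weighted exchangeable $Q$ it is unknown at this point of the argument and cannot be assumed. The paper never conditions on $\Ecal_\infty$ at all: it proves the cylinder-set factorization $\PP{X_k\in A,\,X_{[\ell]\backslash k}\in B}=\EE{(\widetilde{P}_*\circ\lambda_k)(A)\cdot\one{X_{[\ell]\backslash k}\in B}}$ directly, using the $\Ftail$-measurable version of the $\widetilde{P}_*$-integrals from Lemma \ref{lem:Ftail_measurable} so that the right-hand side is legitimately $\sigma(X_{-k})$-measurable. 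As it stands, your proposal reduces the lemma to two statements each of which is as hard as, or circularly dependent on, the lemma itself.
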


By \eqref{eq:lambdak_defined}, we see that
\smash{$\widetilde{P}_*\in\Mcal_\Xcal(\lambda)$} almost surely. Therefore, we
can define $\mu$ as the distribution of this random measure
\smash{$\widetilde{P}_*$} conditional on the event
\smash{$\widetilde{P}_*\in\Mcal_\Xcal(\lambda)$},  
so that $\mu$ is a distribution over \smash{$\Mcal_\Xcal(\lambda)$}.
Next we need to verify that \smash{$Q=(P\circ\lambda)_\mu$}.
As $\BXinf$ is the product $\sigma$-algebra, it suffices
to prove that, for all $n\geq 1$ and all $A_1,\dots,A_n\in\BX$,  
\[
\Pp{Q}{X_1\in A_1, \dots, X_n\in A_n} = \Ep{P\sim \mu}{\prod_{i=1}^n
  (P\circ\lambda_i)(A_i)}. 
\]

By Lemma \ref{lem:Ftail_measurable}, for each $i \geq 1$, and any $A\in\BX$, we
can construct an $\Ftail$-measurable function
\smash{$f_{A,i}:\Xcal^\infty\to [0,\infty]$} such that  
\[
f_{A,i}(X) \aseq \Ep{X'\sim \widetilde{P}}{\one{X'\in A} \cdot 
\frac{\lambda_i(X')}{\lambda_*(X')}} = \int_A \lambda_i(x) \,\d
\widetilde{P}_*.
\] 
Applying this with $A=A_i$ and again with $A=\Xcal$, we can define an
$\Ftail$-measurable function   
\[
f_i(x) = \begin{cases}f_{A_i,i}(x)/ f_{\Xcal,i}(x)
 &\textnormal{if $f_{A_i,i}(x)<\infty$ and $f_{\Xcal,i}(x)\in (0,\infty)$},\\
0 &\textnormal{otherwise}.
\end{cases}
\] 
Applying \eqref{eq:lambdak_defined}, this satisfies 
\begin{equation}\label{eqn:construct_f}
f_i(X) \aseq \frac{\int_{A_i} \lambda_i(x)\,\d \widetilde{P}_*}{\int_\Xcal
  \lambda_i(x) \, \d \widetilde{P}_*} = (\widetilde{P}_*\circ\lambda_i)(A_i). 
\end{equation} 
By the tower law, noting that each $f_i(X)$ is $\Ftail$-measurable and
thus, is measurable with respect to $\sigma(X_{-j})\supseteq \Ftail$ for 
any $j$, we can calculate 
\allowdisplaybreaks
\begin{align*}
&\Ep{Q}{\prod_{i=1}^{m-1} f_i(X) \cdot \prod_{i=m}^n \one{X_i\in A_i}} \\
&=\Ep{Q}{\prod_{i=1}^{m-1} f_i(X) \cdot \Ppst{Q}{X_m\in A_m}{X_{-m}} \cdot
  \prod_{i=m+1}^n \one{X_i\in A_i}} \;\;
  \textnormal{(as each $f_i(X)$ is $\sigma(X_{-m})$-measurable)} \\ 
&=\Ep{Q}{\prod_{i=1}^{m-1} f_i(X) \cdot (\widetilde{P}_*\circ\lambda_m)(A_m) 
\cdot \prod_{i=m+1}^n \one{X_i\in A_i}} \;\;
 \textnormal{(by the fact in \eqref{eq:P_lambdak_conditional})} \\
&=\Ep{Q}{\prod_{i=1}^{m-1} f_i(X) \cdot f_m(X)\cdot 
\prod_{i=m+1}^n \one{X_i\in A_i}} \;\;
\textnormal{(by construction of $f_m$)} \\
&=\Ep{Q}{\prod_{i=1}^m f_i(X) \cdot \prod_{i=m+1}^n \one{X_i\in A_i}} 
\end{align*}
for each $m=1,\dots,n$. Combining these calculations over all $i=1,\dots,n$, we
obtain 
\[
\Pp{Q}{X_1\in A_1, \dots, X_n\in A_n} 
= \Ep{Q}{\prod_{i=1}^n \one{X_i\in A_i}} 
= \Ep{Q}{\prod_{i=1}^n f_i(X)} = \EE{\prod_{i=1}^n
  (\widetilde{P}_*\circ\lambda_i)(A_i)}, 
\]
where the last step holds by construction of the functions $f_i$, $i=1,\dots,n$.  
Since $\mu$ is equal to the distribution of \smash{$\widetilde{P}_*$}
conditional on the almost sure event
\smash{$\widetilde{P}_*\in\Mcal_\Xcal(\lambda)$},  
we therefore see that
\[
\EE{\prod_{i=1}^n (\widetilde{P}_*\circ\lambda_i)(A_i)}
= \Ep{P\sim\mu}{\prod_{i=1}^n (P\circ\lambda_i)(A_i)},
\]
which completes the proof.

{\RaggedRight
\bibliographystyle{plainnat}
\bibliography{definetti}}

\clearpage\appendix 

\section{Proofs of properties of weighted exchangeability}
In this section, we prove Propositions \ref{prop:wtd_exch_equiv} and \ref{prop:Xi_condition_on_Ecalm}, which establish some key properties
of weighted exchangeable distributions. 

\subsection{A note on regularity conditions}

Before proceeding, we pause to comment on a technical point about conditional 
distributions. As $\Xcal$ (and thus also $\Xcal^n$ and $\Xcal^\infty$) is a
standard Borel space, this suffices (see, e.g., \citet{faden1985existence})
to ensure the existence of a regular conditional distribution (also called a
product regular conditional probability kernel) for, say, $X \mid \Ecal_\infty$, or
$(X_{n+1},X_{n+2},\dots) \mid (X_1,\dots,X_n)$, etc.  For example, if we
consider the conditional distribution of $X \mid \Ecal_\infty$, the standard
Borel property guarantees the existence of a kernel $\kappa :
\Xcal^\infty\times\BXinf\to[0,1]$ such that $x \mapsto \kappa(x,A)$ is
measurable for all $A\in\BXinf$, and such that $\kappa(X,\cdot)$ gives the
conditional distribution of $X \mid \Ecal_\infty$, i.e., \smash{$\kappa(X,A)
\aseq \PPst{X\in A}{\Ecal_\infty}$}. In what follows, in any part of the proof
where a conditional distribution is used, it should be understood that we have
constructed a regular conditional distribution, which is guaranteed to exist by
our standard Borel assumption.

\subsection{Proof of Proposition \ref{prop:wtd_exch_equiv}}
\label{app:wtd_exch_equiv}

We will present proofs for the infinite case, where $Q$ is a distribution on
$\Xcal^\infty$; the calculations for the finite case, where $Q$ is a
distribution on $\Xcal^n$, are similar and are omitted for brevity. 

First we prove (a)$\implies$(b). Suppose that $Q$ is $\lambda$-weighted
exchangeable.
 Note that it suffices to consider functions of the form $f(x) = 
  \one{x\in A}$, because any measurable function can be generated by such 
  functions. That is, we want to prove  
\[
\Ep{Q}{\frac{\one{X\in A}}{\lambda_i(X_i)\lambda_j(X_j)}} = 
\Ep{Q}{\frac{\one{X^{ij}\in A}}{\lambda_i(X_i)\lambda_j(X_j)}},
\]
for any $A\in\BXinf$.
As the $\sigma$-algebra on $\Xcal^\infty$ is generated by sets of the form
$A=A_1\times\dots\times A_n\times\Xcal\times\Xcal\times\dots$ where $n\geq 1$
and $A_1,\dots,A_n\in\BX$, it suffices to check that the expected values are 
equal for sets of this form. Without loss of generality it suffices to consider
$n\geq \max\{i,j\}$. Define the function 
\[
h(x_1,\dots,x_n) = \one{(x_1,\dots,x_n)\in A_1\times\dots\times A_n}
\cdot \prod_{k\in[n]\backslash\{i,j\}}\lambda_k(x_k).
\]
We then have   
\begin{align*}
\Ep{Q}{\frac{\one{X\in A}}{\lambda_i(X_i)\lambda_j(X_j)}}
&=\Ep{Q}{\frac{\one{(X_1,\dots,X_n)\in A_1\times\dots\times A_n}}
{\lambda_i(X_i)\lambda_j(X_j)}}\\
&=\Ep{Q_n}{\frac{\one{(X_1,\dots,X_n)\in A_1\times\dots\times A_n}
\cdot\prod_{k\in[n]\backslash\{i,j\}}\lambda_k(X_k)}
{\prod_{k=1}^n \lambda_k(X_k)}}\\
&=\Ep{Q_n}{\frac{h(X_1,\dots,X_n)}{\prod_{k=1}^n \lambda_k(X_k)}}\\
&=\int_{\Xcal^n}h(x_1,\dots,x_n)\,\frac{\d{Q}_n(x_1,\dots,x_n)
}{\prod_{k=1}^n \lambda_k(x_k)},
\end{align*}
and similarly,
\begin{align*}
\Ep{Q}{\frac{\one{X^{ij}\in A}}{\lambda_i(X_i)\lambda_j(X_j)}}
&=\Ep{Q}{\frac{\one{((X^{ij})_1,\dots,(X^{ij})_n)\in 
A_1\times\dots\times A_n}}{\lambda_i(X_i)\lambda_j(X_j)}}\\
&=\Ep{Q_n}{\frac{\one{((X^{ij})_1,\dots,(X^{ij})_n)\in
A_1\times\dots\times A_n}\cdot\prod_{k\in[n]\backslash\{i,j\}}
\lambda_k(X_k)}{\prod_{k=1}^n \lambda_k(X_k)}}\\
&=\Ep{Q_n}{\frac{h((X^{ij})_1,\dots,(X^{ij})_n)}
{\prod_{k=1}^n \lambda_k(X_k)}}\\
&=\int_{\Xcal^n}h((x^{ij})_1,\dots,(x^{ij})_n)\,
\frac{\d{Q}_n(x_1,\dots,x_n)}{\prod_{k=1}^n \lambda_k(x_k)}.
\end{align*}
Finally, since $Q_n$ is $(\lambda_1,\dots,\lambda_n)$-weighted exchangeable, 
\[
\int_{\Xcal^n}h(x_1,\dots,x_n)\,\frac{\d{Q}_n(x_1,\dots,x_n)}
{\prod_{k=1}^n \lambda_k(x_k)} = 
\int_{\Xcal^n}h((x^{ij})_1,\dots,(x^{ij})_n)\,\frac{\d{Q}_n(x_1,\dots,x_n)}
{\prod_{k=1}^n \lambda_k(x_k)}.
\] 

Second we prove (b)$\implies$(a). For each $n\geq 1$, we need to verify 
that the measure induced by \smash{$\frac{\d{Q}_n(x_1,\dots,x_n)}{\prod_{k=1}^n 
\lambda_k(x_k)}$} is finite exchangeable, i.e., for any $A\in\BXn$ and any
permutation $\sigma$ on $[n]$, 
\[
\int_{\Xcal^n}\one{(x_1,\dots,x_n)\in A}\frac{\d{Q}_n(x_1,\dots,x_n)}
{\prod_{k=1}^n \lambda_k(x_k)}
= \int_{\Xcal^n}\one{(x_{\sigma(1)},\dots,x_{\sigma(n)})\in A}
\frac{\d{Q}_n(x_1,\dots,x_n)}{\prod_{k=1}^n \lambda_k(x_k)}.
\]
Since the permutations on $[n]$ can be generated by pairwise swaps, it is
sufficient to show that 
\[
\int_{\Xcal^n}\one{(x_1,\dots,x_n)\in A}\frac{\d{Q}_n(x_1,\dots,x_n)}
{\prod_{k=1}^n \lambda_k(x_k)}
=\int_{\Xcal^n}\one{((x^{ij})_1,\dots,(x^{ij})_n)\in A}
\frac{\d{Q}_n(x_1,\dots,x_n)}{\prod_{k=1}^n \lambda_k(x_k)}
\]
for all $1\leq i<j\leq n$. Equivalently we need to show that
\[
\Ep{Q}{\frac{\one{(X_1,\dots,X_n)\in A}
}{\prod_{k=1}^n \lambda_k(X_k)}} = 
\Ep{Q}{\frac{ \one{((X^{ij})_1,\dots,(X^{ij})_n)\in A}}
{\prod_{k=1}^n \lambda_k(X_k)}}.
\]
Define
\[
f(x) = \frac{\one{(x_1,\dots,x_n) \in A}}
{\prod_{k\in[n]\backslash\{i,j\}}\lambda_k(x_k)}.
\]
Then since we have assumed (b) holds, we have
\smash{$\Ep{Q}{\frac{f(X)}{\lambda_i(X_i)\lambda_j(X_j)}} =
\Ep{Q}{\frac{f(X^{ij})}{\lambda_i(X_i)\lambda_j(X_j)}}$},
which completes the proof.

\subsection{Proof of Proposition \ref{prop:Xi_condition_on_Ecalm}}
\label{app:Xi_condition_on_Ecalm}

First consider the case that $Q$ is a $\lambda$-weighted exchangeable
distribution on $\Xcal^n$. We need to show that, for any $A\in\BX$, 
\[
\Ppst{Q}{X_i\in A}{\Ecal_m} \aseq
\sum_{j=1}^m  \frac{\sum_{\sigma\in\Scal_m : \sigma(i)=j} \prod_{k=1}^m \lambda_k(X_{\sigma(k)})}{\sum_{\sigma\in\Scal_m} \prod_{k=1}^m \lambda_k(X_{\sigma(k)}) } \cdot  \one{X_j\in A}.
\]
Equivalently, it is sufficient to check that
\[
\Ep{Q}{\sum_{j=1}^m  \frac{\sum_{\sigma\in\Scal_m : \sigma(i)=j} \prod_{k=1}^m \lambda_k(X_{\sigma(k)})}{\sum_{\sigma\in\Scal_m} \prod_{k=1}^m \lambda_k(X_{\sigma(k)}) } \cdot  \one{X_j\in A}\cdot\one{X\in C}} = 
\Ep{Q}{\one{X_i\in A}\cdot\one{X\in C}}
\]
holds for any $C\in\Ecal_m$. Define \smash{$g(x) = \frac{\one{x\in 
    C}}{\sum_{\sigma\in\Scal_m} \prod_{k=1}^m\lambda_k(x_{\sigma(k)})}$}, and 
note that as $C\in\Ecal_m$, we see that $g$ is $\Ecal_m$-measurable, i.e., 
$g((x_{\sigma(1)},\dots,x_{\sigma(m)},x_{m+1},\dots,x_n)) = g(x)$
for all $x\in\Xcal^n$ and all $\sigma\in\R^m$. We have
\begin{align*}
&\Ep{Q}{\sum_{j=1}^m  \frac{\sum_{\sigma\in\Scal_m : \sigma(i)=j} \prod_{k=1}^m \lambda_k(X_{\sigma(k)})}{\sum_{\sigma\in\Scal_m} \prod_{k=1}^m \lambda_k(X_{\sigma(k)}) } \cdot  \one{X_j\in A}\cdot\one{X\in C}}\\
&=\Ep{Q}{ \frac{\sum_{\sigma\in\Scal_m } \prod_{k=1}^m \lambda_k(X_{\sigma(k)})\cdot  \one{X_{\sigma(i)}\in A} }{\sum_{\sigma\in\Scal_m} \prod_{k=1}^m \lambda_k(X_{\sigma(k)}) } \cdot\one{X\in C}}\\
&=\Ep{Q}{ \sum_{\sigma\in\Scal_m } \prod_{k=1}^m \lambda_k(X_{\sigma(k)})\cdot  \one{X_{\sigma(i)}\in A}  \cdot g(X)}\\
&= \int_{\Xcal^n}\sum_{\sigma\in\Scal_m}\prod_{k=1}^m\lambda_k(x_{\sigma(k)})\cdot \one{x_{\sigma(i)}\in A}\cdot g(x)\,\d{Q}(x)\\
&=\sum_{\sigma\in\Scal_m} \int_{\Xcal^n}\prod_{k=1}^m\lambda_k(x_{\sigma(k)})\cdot \one{x_{\sigma(i)}\in A}\cdot g(x)\cdot \prod_{k=1}^m\lambda_k(x_k)\cdot \prod_{k=m+1}^n\lambda_k(x_k)\,\frac{\d{Q}(x)}{\prod_{k=1}^n\lambda_k(x_k)}\\
&=\sum_{\sigma\in\Scal_m} \int_{\Xcal^n}\prod_{k=1}^m\lambda_k(x_k)\cdot \one{x_i\in A}\cdot g(x)\cdot \prod_{k=1}^m\lambda_k(x_{\sigma^{-1}(k)})\cdot \prod_{k=m+1}^n\lambda_k(x_k)\,\frac{\d{Q}(x)}{\prod_{k=1}^n\lambda_k(x_k)}\\
&=\sum_{\sigma\in\Scal_m} \int_{\Xcal^n} \one{x_i\in A}\cdot g(x)\cdot \prod_{k=1}^m\lambda_k(x_{\sigma^{-1}(k)})\,\d{Q}(x)\\
&=\int_{\Xcal^n} \one{x_i\in A}\cdot g(x)\cdot  \bigg(\sum_{\sigma\in\Scal_m}  \prod_{k=1}^m\lambda_k(x_{\sigma^{-1}(k)})\bigg)\,\d{Q}(x)\\
&=\int_{\Xcal^n} \one{x_i\in A}\cdot \one{x \in C}\,\d{Q}(x)\\
&=\Ep{Q}{\one{X_i\in A}\cdot\one{X\in C}},
\end{align*}
where the fifth equality holds by replacing $x$ with
$(x_{\sigma^{-1}(1)},\dots,x_{\sigma^{-1}(m)},x_{m+1},\dots,x_n)$, and recalling
that $Q$ is $\lambda$-weighted exchangeable while $g$ is $\Ecal_m$-measurable.
This verifies the desired equality.

Now suppose $Q$ is a $\lambda$-weighted exchangeable distribution on
$\Xcal^\infty$.  For any $n\geq m$, define
\[
\Ecal_{m,n} = \Big\{A\in\BXn : \one{(x_1,\dots,x_m)\in A} = 
\one{(x_{\sigma(1)},\dots,x_{\sigma(m)},x_{m+1},\dots,x_n)\in A},
\;\textnormal{for all $x\in\Xcal^n$ and $\sigma\in\Scal_m$}\Big\}.
\]
We can then verify that $\Ecal_m\subseteq\BXinf$ is the minimal $\sigma$-algebra 
generated by $\cup_{n\geq m} \, \Ecal_{m,n}$.  By Levy's Upwards Theorem (e.g.,
Chapter 14.2 of \citet{williams1991probability}), 
\[
\Ppst{Q}{X_i\in A}{\Ecal_m} \aseq \lim_{n\to\infty} \Ppst{Q}{X_i\in A}{\Ecal_{m,n}}.
\]
Applying our work above to the finite $(\lambda_1,\dots,\lambda_n)$-weighted
exchangeable distribution $Q_n$, we have
\[
\Ppst{Q}{X_i\in A}{\Ecal_{m,n}} \aseq 
\sum_{j=1}^m \frac{\sum_{\sigma\in\Scal_m : \sigma(i)=j} \prod_{k=1}^m \lambda_k(X_{\sigma(k)})}{\sum_{\sigma\in\Scal_m} \prod_{k=1}^m \lambda_k(X_{\sigma(k)}) } \cdot  \one{X_j\in A}
\]
for each $n\geq m$, and therefore, as desired, we conclude that
\[
\Ppst{Q}{X_i\in A}{\Ecal_m} \aseq 
\sum_{j=1}^m \frac{\sum_{\sigma\in\Scal_m : \sigma(i)=j} \prod_{k=1}^m \lambda_k(X_{\sigma(k)})}{\sum_{\sigma\in\Scal_m} \prod_{k=1}^m \lambda_k(X_{\sigma(k)}) } \cdot  \one{X_j\in A}.
\]

\section{Proofs of supporting lemmas for main results}

\subsection{Proof of Lemma \ref{lem:mix_unique}}
\label{app:mix_unique}

Fix any $i\neq j$ and define $\lambda_{ij}(x) =
\min\{\lambda_i(x),\lambda_j(x)\}$. For any $P\in\Mcal_\Xcal(\lambda)$, a
direct calculation shows that
\[
(P\circ\lambda_i)(A) = 
\frac{\Ep{P\circ\lambda_{ij}}{\one{X\in A}\cdot \frac{\lambda_i(X)}{\lambda_{ij}(X)}}}
{\Ep{P\circ\lambda_{ij}}{ \frac{\lambda_i(X)}{\lambda_{ij}(X)}}}
\]
for all $A\in\BX$. Therefore, if \smash{$(P\circ\lambda_{ij})(A)\aseq
(P_*\circ\lambda_{ij})(A)$} for any $i\neq j$ and for all $A\in\BX$, then this
implies that $(P\circ\lambda_i)(A)\aseq (P_*\circ\lambda_i)(A)$ for any $i$ 
and for all $A\in\BX$.  Therefore, it suffices to show that
\smash{$(P\circ\lambda_{ij})(A)\aseq (P_*\circ\lambda_{ij})(A)$} for any
$A\in\BX$. 

Next define a new distribution \smash{$\tilde\mu$} on $\Mcal_\Xcal(\lambda)$ as 
\[
\tilde\mu(B) =
\frac{\int_B \frac{\int_\Xcal\lambda_{ij}(x)\,\d{P}(x)}{\int_\Xcal\lambda_i(x)\,\d{P}(x)}\cdot \frac{\int_\Xcal\lambda_{ij}(x)\,\d{P}(x)}{\int_\Xcal\lambda_j(x)\,\d{P}(x)}\,\d\mu(P)}{
\int_{\Mcal_\Xcal(\lambda)} \frac{\int_\Xcal\lambda_{ij}(x)\,\d{P}'(x)}{\int_\Xcal\lambda_i(x)\,\d{P}'(x)}\cdot \frac{\int_\Xcal\lambda_{ij}(x)\,\d{P}'(x)}{\int_\Xcal\lambda_j(x)\,\d{P}'(x)}\,\d\mu(P')}
\] 
for all measurable $B\subseteq\Mcal_\Xcal(\lambda)$. (This is well-defined,
because the integrand in the denominator is positive and is bounded by 1,
surely, by construction.)  Since $\mu$ and \smash{$\tilde\mu$} are absolutely
continuous with respect to each other, with Radon--Nikodym derivative  
\[
\frac{\d\tilde\mu(P)}{\d\mu(P)}= \frac{ \frac{\int_\Xcal\lambda_{ij}(x)\,\d{P}(x)}{\int_\Xcal\lambda_i(x)\,\d{P}(x)}\cdot \frac{\int_\Xcal\lambda_{ij}(x)\,\d{P}(x)}{\int_\Xcal\lambda_j(x)\,\d{P}(x)}}{
\int_{\Mcal_\Xcal(\lambda)} \frac{\int_\Xcal\lambda_{ij}(x)\,\d{P}'(x)}{\int_\Xcal\lambda_i(x)\,\d{P}'(x)}\cdot \frac{\int_\Xcal\lambda_{ij}(x)\,\d{P}'(x)}{\int_\Xcal\lambda_j(x)\,\d{P}'(x)}\,\d\mu(P')}>0,
\] 
we see that $(P\circ\lambda_{ij})(A)= (P_*\circ\lambda_{ij})(A)$ holds almost
surely under $P\sim\mu$ if and only if it holds almost surely under
\smash{$P\sim\tilde\mu$}, so now we will verify the statement for
\smash{$P\sim\tilde\mu$}. This is the same as verifying that
\[
\Ep{P\sim\tilde\mu}{(P\circ\lambda_{ij})(A)} = (P_*\circ\lambda_{ij})(A) \quad \textnormal{and} \quad
\textnormal{Var}_{P\sim\tilde\mu}({(P\circ\lambda_{ij})(A)}) = 0,
\]
or equivalently, 
\[
\Ep{P\sim\tilde\mu}{(P\circ\lambda_{ij})(A)} = (P_*\circ\lambda_{ij})(A) \quad \textnormal{and} \quad
\Ep{P\sim\tilde\mu}{\big((P\circ\lambda_{ij})(A)\big)^2} = \big((P_*\circ\lambda_{ij})(A)\big)^2.
\]
For any $B_1,B_2\in\BX$,
\begin{align*}
&\Ep{P\sim \tilde\mu}{(P\circ\lambda_{ij})^2(B_1\times B_2)} \\
&=\Ep{P\sim\mu}{\int (P\circ\lambda_{ij})^2(B_1\times B_2)\frac{ \frac{\int_\Xcal\lambda_{ij}(x)\,\d{P}(x)}{\int_\Xcal\lambda_i(x)\,\d{P}(x)}\cdot \frac{\int_\Xcal\lambda_{ij}(x)\,\d{P}(x)}{\int_\Xcal\lambda_j(x)\,\d{P}(x)}}{
\int_{\Mcal_\Xcal(\lambda)} \frac{\int_\Xcal\lambda_{ij}(x)\,\d{P}(x)}{\int_\Xcal\lambda_i(x)\,\d{P}'(x)}\cdot \frac{\int_\Xcal\lambda_{ij}(x)\,\d{P}'(x)}{\int_\Xcal\lambda_j(x)\,\d{P}'(x)}\,\d\mu(P')}} \;\; \textnormal{(by definition of $\tilde\mu$)} \\
&=
\frac{\Ep{P\sim \mu}{\Ep{P\circ\lambda}{\one{(X_i,X_j)\in B_1\times B_2}\cdot\frac{\lambda_{ij}(X_i)}{\lambda_i(X_i)}\cdot\frac{\lambda_{ij}(X_j)}{\lambda_j(X_j)}}}}{
\Ep{P\sim\mu}{\Ep{P\circ\lambda}{\frac{\lambda_{ij}(X_i)}{\lambda_i(X_i)}\cdot\frac{\lambda_{ij}(X_j)}{\lambda_j(X_j)}}}}
\;\; \textnormal{(by definition of $P\circ\lambda$ and of $P\circ\lambda_{ij}$)}\\
&= 
\frac{\Ep{P_*\circ\lambda}{\one{(X_i,X_j)\in B_1\times B_2}\cdot\frac{\lambda_{ij}(X_i)}{\lambda_i(X_i)}\cdot\frac{\lambda_{ij}(X_j)}{\lambda_j(X_j)}}
}{
\Ep{P_*\circ\lambda}{\frac{\lambda_{ij}(X_i)}{\lambda_i(X_i)}\cdot\frac{\lambda_{ij}(X_j)}{\lambda_j(X_j)}}} \;\; \textnormal{(since $P_*\circ\lambda = Q = (P\circ\lambda)_\mu$)}\\
&= 
\frac{\Ep{P_*\circ\lambda_i}{\one{X\in B_1}\cdot\frac{\lambda_{ij}(X)}{\lambda_i(X)}}
}{
\Ep{P_*\circ\lambda_i}{\frac{\lambda_{ij}(X)}{\lambda_i(X)}}}\cdot\frac{\Ep{P_*\circ\lambda_j}{\one{X\in B_2}\cdot\frac{\lambda_{ij}(X)}{\lambda_j(X)}}}{\Ep{P_*\circ\lambda_j}{\frac{\lambda_{ij}(X)}{\lambda_j(X)}}} \;\; \textnormal{(by definition of $P_*\circ\lambda$)}\\
&=(P_*\circ\lambda_{ij})(B_1)\cdot (P_*\circ\lambda_{ij})(B_2) \;\; \textnormal{(by definition of $P_*\circ\lambda_i$, $P_*\circ\lambda_j$, $P_*\circ\lambda_{ij}$)}.
\end{align*}
Applying this calculation with $B_1=A$ and $B_2=\Xcal$ we obtain
\[
\Ep{P\sim\tilde\mu}{(P\circ\lambda_{ij})(A)}
= \Ep{P\sim\tilde\mu}{(P\circ\lambda_{ij})(A)\cdot(P\circ\lambda_{ij})(\Xcal)} = 
(P_*\circ\lambda_{ij})(A)\cdot(P_*\circ\lambda_{ij})(\Xcal)=(P_*\circ\lambda_{ij})(A).
\]
Applying the calculation again with $B_1=B_2=A$, we obtain
\[
\Ep{P\sim\tilde\mu}{\big((P\circ\lambda_{ij})(A)\big)^2} =\big((P_*\circ\lambda_{ij})(A)\big)^2,
\]
which completes the proof. 

\subsection{Proof of Lemma \ref{lem:Pbar_equiv}}\label{app:Pbar_equiv}

First consider the case that \smash{$\sum_{i=1}^\infty p_i(X_i)\cdot \one{X_i\in
    A}=\infty$}. For $n$ sufficiently large so that \smash{$\sum_{i=1}^n B_i>0$} 
  and \smash{$\sum_{i=1}^n p_i(X_i)\cdot \one{X_i\in A}>0$}, we calculate 
\begin{align*}
\widebar{P}_n(A) &= \frac{\sum_{i=1}^n B_i\cdot \one{X_i\in A}}{\sum_{i=1}^n B_i}\\
&=  \frac{\sum_{i=1}^n p_i(X_i)\cdot \one{X_i\in A}}{\sum_{i=1}^n p_i(X_i)}\cdot\frac{\sum_{i=1}^np_i(X_i)}{\sum_{i=1}^n B_i}\cdot\frac{\sum_{i=1}^n B_i\cdot\one{X_i\in A}}{\sum_{i=1}^n p_i(X_i)\cdot\one{X_i\in A}}\\
&=\widetilde{P}_n(A)\cdot\frac{\sum_{i=1}^np_i(X_i)}{\sum_{i=1}^n B_i}\cdot\frac{\sum_{i=1}^n B_i\cdot\one{X_i\in A}}{\sum_{i=1}^n p_i(X_i)\cdot\one{X_i\in A}},
\end{align*}
where the last step holds by the definition of \smash{$\widetilde{P}_n$} and of 
$p_i(X_i)$ given in \eqref{eq:Ptilde} and \eqref{eq:Bi}.  Thus,
\[
\Big|\widebar{P}_n(A)  - \widetilde{P}_n(A)\Big| = \widetilde{P}_n(A) \cdot\left|\frac{\sum_{i=1}^np_i(X_i)}{\sum_{i=1}^n B_i}\cdot\frac{\sum_{i=1}^n B_i\cdot\one{X_i\in A}}{\sum_{i=1}^n p_i(X_i)\cdot\one{X_i\in A}} - 1\right|,
\]
and so, since \smash{$\widetilde{P}_n(A)\leq 1$} because
\smash{$\widetilde{P}_n$} is a distribution,
\[
\limsup_{n\to \infty} \Big|\widebar{P}_n(A)  - \widetilde{P}_n(A)\Big| 
\leq\limsup_{n\to \infty}\left|\frac{\sum_{i=1}^np_i(X_i)}{\sum_{i=1}^n B_i}\cdot\frac{\sum_{i=1}^n B_i\cdot\one{X_i\in A}}{\sum_{i=1}^n p_i(X_i)\cdot\one{X_i\in A}} - 1\right|.
\]

Next, since \smash{$\sum_{i=1}^\infty p_i(X_i)=\infty$} and
\smash{$\sum_{i=1}^\infty p_i(X_i)\one{X_i\in A}=\infty$} in this case, we have 
\[
\lim_{n\to\infty}\frac{\sum_{i=1}^n B_i}{\sum_{i=1}^np_i(X_i)} = 1, \quad \textnormal{almost surely (conditional on $X$)},
\]
by Chapter IX, Section \S3, Theorem 12 of \citet{petrov1975sums} (applied by
checking the variance condition \smash{$\sum_{i\geq i_*}
    \frac{\textnormal{{Var}}(B_i\mid X_i)}{(\sum_{j\leq i}p_j(X_j))^2} \leq
    \sum_{i\geq i_*}\frac{p_i(X_i)}{(\sum_{j\leq i}p_j(X_j))^2} < \infty$, for 
  $i_* = \min\{i: p_i(X_i)>0\}$}). Thus, similarly, 
\[
\lim_{n\to\infty}\frac{\sum_{i=1}^n B_i\cdot\one{X_i\in A}}{\sum_{i=1}^n p_i(X_i)\cdot\one{X_i\in A}}=1, \quad \textnormal{almost surely (conditional on $X$)}.
\]
Thus, on the event \smash{$\sum_{i=1}^\infty p_i(X_i)\cdot\one{X_i\in A}
  =\infty$}, we have
\[
\limsup_{n\to \infty}\left|\frac{\sum_{i=1}^np_i(X_i)}{\sum_{i=1}^n B_i}\cdot\frac{\sum_{i=1}^n B_i\cdot\one{X_i\in A}}{\sum_{i=1}^n p_i(X_i)\cdot\one{X_i\in A}} - 1\right| = 0, \quad \textnormal{almost surely (conditional on $X$)},
\]
and so
\begin{equation}\label{eqn:toshow_lem:Pbar_equiv}
\limsup_{n\to \infty}
\Big|\widebar{P}_n(A)  - \widetilde{P}_n(A)\Big| = 0, \quad \textnormal{almost surely (conditional on $X$)}.
\end{equation}
 
Next, consider the case that \smash{$\sum_{i=1}^\infty
  p_i(X_i)\cdot\one{X_i\in A} <\infty$} while \smash{$\sum_{i=1}^\infty
  p_i(X_i)=\infty$}. Then in this case, conditional on $X$, it holds almost
surely that \smash{$\sum_{i=1}^\infty B_i\cdot\one{X_i\in A}<\infty$} while  
\smash{$\sum_{i=1}^\infty B_i=\infty$} (by the first and second Borel--Cantelli
Lemmas, respectively); on these events, we have
\[
\lim_{n\to\infty}\widetilde{P}_n(A)=\lim_{n\to\infty}\widebar{P}_n(A)=0.
\]
This implies \eqref{eqn:toshow_lem:Pbar_equiv} again holds conditional on $X$. 

Finally, we have shown that \smash{$\sum_{i=1}^\infty p_i(X_i)=\infty$} almost 
surely. Combining the cases considered above, we see that the claim 
\eqref{eqn:toshow_lem:Pbar_equiv} holds marginally, i.e., with respect to the
distribution of $X$ as well as $B_1,B_2,\dots$, which completes the proof. 

\subsection{Proof of Lemma \ref{lem:exch_inf_subseq}}
\label{app:exch_inf_subseq}

By Proposition \ref{prop:exch_equiv}, it suffices to show that, 
 for any $j\neq k$ and any measurable
  $f:\Xcal^\infty\to[0,\infty)$, 
\[
\Ep{Q}{f(X)} = \Ep{Q}{f(X^{jk})}.
\]
It is equivalent to verify this for functions of the form $f(x) = \one{x\in A}$,
where $A\in\BXinf$, i.e., to show that for any $j\neq k$ and for any
$A\in\BXinf$,  
\[
\PP{\check{X}\in A} = \PP{\check{X}^{jk}\in A}.
\] 
Since $\BXinf$ is the product $\sigma$-algebra, it suffices to show that, for
all $n\geq \max\{j,k\}$ and all $A\in\BXn$,
\[
\PP{(\check{X}_1,\dots,\check{X}_n)\in A} 
= \PP{((\check{X}^{jk})_1,\dots,(\check{X}^{jk})_n)\in A}.
\]
We will prove the stronger statement
\[
\PPst{(\check{X}_1,\dots,\check{X}_n)\in A}{I_1,\dots,I_n} 
\aseq \PPst{((\check{X}^{jk})_1,\dots,(\check{X}^{jk})_n)\in A}{I_1,\dots,I_n}.
\]
Equivalently, for any indices $1\leq i_1<\dots<i_n < \infty$ such that the event
$E = \{I_1 = i_1,\dots,I_n = i_n\}$ has positive probability, we need to verify
that
\[
\PPst{(\check{X}_1,\dots,\check{X}_n)\in A}{I_1=i_1,\dots,I_n=i_n} 
= \PPst{((\check{X}^{jk})_1,\dots,(\check{X}^{jk})_n)\in A}{I_1=i_1,\dots,I_n=i_n},
\]
or equivalently,
\[
\EE{\one{(\check{X}_1,\dots,\check{X}_n)\in A}\one{E}}
= \EE{\one{((\check{X}^{jk})_1,\dots,(\check{X}^{jk})_n)\in A}\one{E}}.
\]
Now let us rewrite this to be more interpretable. Define
\[
A' = \{x\in\Xcal^\infty : (x_{i_1},\dots,x_{i_n})\in A\},
\]
thus $\one{(X_{i_1},\dots,X_{i_n})\in A} = \one{X\in A'}$ holds surely,
meaning that on $E$, it holds that
\smash{$\one{(\check{X}_1,\dots,\check{X}_n)\in A}=\one{X\in A'}$}.  
Moreover, by construction,
\[
(X_{i_1},\dots,X_{i_n})^{jk} = ((X^{i_ji_k})_{i_1},\dots,(X^{i_ji_k})_{i_n})
\]
and thus \smash{$\one{(X_{i_1},\dots,X_{i_n})^{jk}\in A} = \one{X^{i_ji_k}\in
    A'}$}, so that on $E$, it holds that
\smash{$\one{(\check{X}_1,\dots,\check{X}_n)^{jk}\in 
    A}=\one{X^{i_ji_k}\in A'}$}. Therefore, we now just need to show 
\[
\EE{\one{X\in A'}\one{E}}= \EE{\one{X^{i_ji_k}\in A}\one{E}}.
\]
Next, we calculate
\begin{align*}
\PPst{E}{X} &= \PPst{B_\ell = \one{\ell\in\{i_1,\dots,i_n\}}, \ell = 1,\dots,i_n}{X}\\
&= \prod_{r=1}^n \PPst{B_{i_r}=1}{X} \cdot\prod_{\ell\in[i_n]\backslash\{i_1,\dots,i_n\}}\PPst{B_{i_r}=0}{X}\\
&= \prod_{r=1}^n p_{i_r}(X_{i_r}) \cdot\prod_{\ell\in[i_n]\backslash\{i_1,\dots,i_n\}} (1-p_\ell(X_\ell))\\
&= \prod_{r=1}^n \frac{\inf_{x\in\Xcal}\lambda_{i_r}(x)/\lambda_*(x)}{\lambda_{i_r}(X_{i_r})/\lambda_*(X_{i_r})} \cdot\prod_{\ell\in[i_n]\backslash\{i_1,\dots,i_n\}} \left(1-\frac{\inf_{x\in\Xcal}\lambda_\ell(x)/\lambda_*(x)}{\lambda_\ell(X_\ell)/\lambda_*(X_\ell)}\right)\\
&=  \frac{h(X)}{\lambda_{i_j}(X_{i_j})\lambda_{i_k}(X_{i_k})},
\end{align*}
where
\begin{multline*}
h(x) =  \frac{\inf_{x'\in\Xcal}\lambda_{i_j}(x')/\lambda_*(x')}{1/\lambda_*(x_{i_j})}\cdot   \frac{\inf_{x'\in\Xcal}\lambda_{i_k}(x')/\lambda_*(x')}{1/\lambda_*(x_{i_k})} \cdot{} \\ \prod_{r\in[n]\backslash\{j,k\}} \frac{\inf_{x'\in\Xcal}\lambda_{i_r}(x')/\lambda_*(x')}{\lambda_{i_r}(x_{i_r})/\lambda_*(x_{i_r})} \cdot\prod_{\ell\in[i_n]\backslash\{i_1,\dots,i_n\}} \left(1-\frac{\inf_{x'\in\Xcal}\lambda_\ell(x')/\lambda_*(x')}{\lambda_\ell(x_\ell)/\lambda_*(x_\ell)}\right).
\end{multline*}
We can therefore write
\[
\EE{\one{X\in A'}\one{E}} = \EE{\one{X\in A'}\cdot\PPst{E}{X}}
= \EE{\one{X\in A'}\cdot  \frac{h(X)}{\lambda_{i_j}(X_{i_j})\lambda_{i_k}(X_{i_k})}},
\]
and similarly
\[
\EE{\one{X^{i_ji_k}\in A'}\one{E}} = \EE{\one{X^{i_ji_k}\in A'}\cdot  \frac{h(X)}{\lambda_{i_j}(X_{i_j})\lambda_{i_k}(X_{i_k})}}
= \EE{\one{X^{i_ji_k}\in A'}\cdot  \frac{h(X^{i_ji_k})}{\lambda_{i_j}(X_{i_j})\lambda_{i_k}(X_{i_k})}},
\]
where the last step holds because \smash{$h(x) = h(x^{i_ji_k})$} for all $x$, by
definition. By Proposition \ref{prop:wtd_exch_equiv}, we have
\[
\EE{\one{X^{i_ji_k}\in A'}\cdot  \frac{h(X^{i_ji_k})}{\lambda_{i_j}(X_{i_j})\lambda_{i_k}(X_{i_k})}} 
= \EE{\one{X\in A'}\cdot  \frac{h(X)}{\lambda_{i_j}(X_{i_j})\lambda_{i_k}(X_{i_k})}},
\]
which proves that
\[
\EE{\one{X\in A'}\one{E}} =\EE{\one{X^{i_ji_k}\in A'}\one{E}},
\]
and thus completes the proof.

\subsection{Proof of Lemma \ref{lem:Ftail_measurable}}
\label{app:proof_lem:Ftail_measurable}

First, we calculate
\begin{align*}
 \Ep{X'\sim\widetilde{P}}{f(X')}
&=\int_\Xcal f(x)\,\d\widetilde{P}(x)\\
&=\int_\Xcal \int_{t\geq 0}\one{f(x)\geq t}\,\d{t}\,\d\widetilde{P}(x)\\
&= \int_{t\geq 0}\int_\Xcal \one{f(x) \geq t}\,\d\widetilde{P}(x)\,\d{t} \;\; \textnormal{(by Tonelli's theorem)}\\
&= \int_{t\geq 0}\widetilde{P}(L_t)\,\d{t},
\end{align*}
where $L_t$ is the nested family of sets given by $L_t = \{x\in\Xcal: f(x)\geq
t\}\in\BX$.

Simplifying further, it is therefore sufficient to show that, for any $C\in\BX$,  
there exists an $\Ftail$-measurable function $h_C:\Xcal^\infty\to[0,1]$
such that \smash{$h_C(X)\aseq \widetilde{P}(C)$}. Taking
\[
h_C(X)= \limsup_{n\to\infty}\widetilde{P}_n(C),
\]
by \eqref{eqn:Ptilde_is_limit}, we see that \smash{$h_C(X)\aseq
  \widetilde{P}(C)$} must hold.  It remains to show that the random variable
$h_C(X)$ is $\Ftail$-measurable. 

For any $n> m\geq 0$ define
\[
T_{m,n} = 
\frac{
\sum_{i=m+1}^n\frac{\inf_{x\in\Xcal}\lambda_i(x)/\lambda_*(x)}{\lambda_i(X_i)/\lambda_*(X_i)}
 \cdot \one{X_i\in C}}{\sum_{i=m+1}^n \frac{\inf_{x\in\Xcal}\lambda_i(x)/\lambda_*(x)}{\lambda_i(X_i)/\lambda_*(X_i)}},
\]
where, for any fixed $m$, $T_{m,n}$ is well-defined (i.e., the denominator is
positive) for sufficiently large $n$ by \eqref{eq:suff_condition}.  Note that
$T_{m,n}$ is $\Fcal_m$-measurable, and so \smash{$\limsup_{n\to\infty} 
  T_{m,n}$} is also $\Fcal_m$-measurable.  To complete the proof, we will verify
that 
\[
\limsup_{n\to\infty}\widetilde{P}_n(C) = \limsup_{n\to\infty} T_{m,n}
\]
holds surely, for any $m$; if this is the case, then $h_C(X)$ is
$\Fcal_m$-measurable for all $m\geq 0$, and thus is $\Ftail$-measurable. It is 
sufficient to verify that
\begin{equation}\label{eq:check_limsup} 
\limsup_{n\to\infty}\Big|T_{m,n} - \widetilde{P}_n(C)\Big| = 0
\end{equation}
holds surely.  We calculate (for any $n>m$ sufficiently large, so that $T_{m,n}$
is well-defined),
\begin{align*}
\Big|T_{m,n} - \widetilde{P}_n(C)\Big|
&= \left| \frac{
\sum_{i=m+1}^n\frac{\inf_{x\in\Xcal}\lambda_i(x)/\lambda_*(x)}{\lambda_i(X_i)/\lambda_*(X_i)}
 \cdot
\one{X_i\in C}}{\sum_{i=m+1}^n \frac{\inf_{x\in\Xcal}\lambda_i(x)/\lambda_*(x)}{\lambda_i(X_i)/\lambda_*(X_i)}}
 -\frac{
\sum_{i=1}^n\frac{\inf_{x\in\Xcal}\lambda_i(x)/\lambda_*(x)}{\lambda_i(X_i)/\lambda_*(X_i)}
 \cdot
\one{X_i\in C}}{\sum_{i=1}^n \frac{\inf_{x\in\Xcal}\lambda_i(x)/\lambda_*(x)}{\lambda_i(X_i)/\lambda_*(X_i)}}
\right|\\
&\leq \frac{ \sum_{i=1}^m  \frac{\inf_{x\in\Xcal}\lambda_i(x)/\lambda_*(x)}{\lambda_i(X_i)/\lambda_*(X_i)} }{ \sum_{i=1}^n \frac{\inf_{x\in\Xcal}\lambda_i(x)/\lambda_*(x)}{\lambda_i(X_i)/\lambda_*(X_i)}}
\leq \frac{m}{\sum_{i=1}^n \frac{\inf_{x\in\Xcal}\lambda_i(x)/\lambda_*(x)}{\sup_{x\in\Xcal}\lambda_i(x)/\lambda_*(x)}},
\end{align*}
and note that the limit of the denominator is $\infty$ by
\eqref{eq:suff_condition}. Therefore, \eqref{eq:check_limsup} holds surely, and
thus we have shown that $h_C(X)$ is $\Ftail$-measurable, which completes the
proof.

\subsection{Proof of Lemma \ref{lem:check_P_circ_lambda_k}}
\label{app:proof_lem:check_P_circ_lambda_k}

We split the proof into two parts. First, we verify that the two claims hold for
a particular choice of $k$.  Then we extend to all $k\geq 1$.

\subsubsection{Special case: proof for a specific $k$}

First we fix any $k \geq 1$ satisfying 
\[
\frac{\inf_{x\in\Xcal} \, \lambda_k(x)/\lambda_*(x)}
{\sup_{x\in\Xcal} \, \lambda_k(x)/\lambda_*(x)} > 0.
\]
Note that there must be infinitely many such $k$, by \eqref{eq:suff_condition}.
We will first show that the two claims hold for this particular choice of $k$.

First, we check \eqref{eq:lambdak_defined}: since \smash{$\widetilde{P}$} is a 
distribution, we have
\[
\int_\Xcal\lambda_k(x) \, \d\widetilde{P}_*(x) =
\int_\Xcal \frac{\lambda_k(x)}{\lambda_*(x)}\,\d\widetilde{P}(x) \in 
\left[\inf_{x\in\Xcal} \, \frac{\lambda_k(x)}{\lambda_*(x)}, \;
\sup_{x\in\Xcal} \, \frac{\lambda_k(x)}{\lambda_*(x)}\right]
\subseteq (0,\infty),
\]
where the last step holds by our choice of $k$. Therefore,
\eqref{eq:lambdak_defined} holds surely for our choice of $k$.

Next, we need to prove that \eqref{eq:P_lambdak_conditional} holds for our
choice of $k$. It is equivalent to show that, for $X\sim Q$,
\[
\PP{X_k \in A, X_{-k}\in B} = \EE{(\widetilde{P}_*\circ\lambda_k)(A) \cdot \one{X_{-k}\in
    B}}, \quad \textnormal{for all $A\in\BX$, $B \in \BXinf$}.
\]
Note that we have shown that \smash{$(\widetilde{P}_*\circ\lambda_k)$} is
well-defined surely, in our proof of \eqref{eq:lambdak_defined}, above, and thus
this expected value is well-defined.  As $\BXinf$ is a product $\sigma$-algebra,
it is sufficient to show that, for all $\ell \geq k$,
\begin{equation}\label{eq:cond_kl}
\PP{X_k \in A, X_{[\ell]\backslash k}\in B} = \EE{(\widetilde{P}_*\circ\lambda_k)(A) \cdot 
  \one{X_{[\ell]\backslash k} \in B}}, \quad \textnormal{for all $A\in\BX$, $B \in
  \Bcal(\Xcal^{\ell-1})$},
\end{equation}
where \smash{$X_{[\ell ]\backslash k} = (X_1,\dots,X_{k-1},X_{k+1},\dots,X_\ell)
  \in\Xcal^{\ell -1}$}. 

The next lemma, proved in Appendix \ref{app:cond_Ptilde}, reduces the above
condition to one involving a limit of the weighted empirical distribution.   

\begin{lemma}\label{lem:cond_Ptilde} 
Under the notation and assumptions above, it holds that
\[
\lim_{n\to\infty} \left| \PP{X_k \in A, X_{[\ell]\backslash k}\in B} - 
\EE{(\widetilde{P}_n\circ(\lambda_k/\lambda_*))(A)\cdot 
\one{X_{[\ell]\backslash k}\in B}}\right| = 0,
\]
where $(\lambda_k/\lambda_*)\in\Lambda$ should be interpreted elementwise,
i.e., $(\lambda_k/\lambda_*)(x) = \lambda_k(x)/\lambda_*(x)$.
\end{lemma}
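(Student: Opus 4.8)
The plan is to prove that the right-hand expectation converges, as $n\to\infty$, to the probability $\PP{X_k\in A, X_{[\ell]\setminus k}\in B}$ on the left; the claimed vanishing of the absolute difference then follows. To set up, I would first rewrite the reweighted empirical distribution explicitly. Writing $a_i=\inf_{x\in\Xcal}\lambda_i(x)/\lambda_*(x)$ and using the definition of $\widetilde{P}_n$ in \eqref{eq:Ptilde}, a direct calculation gives
\[
(\widetilde{P}_n\circ(\lambda_k/\lambda_*))(A)=\frac{\sum_{i=1}^n W_i\cdot\one{X_i\in A}}{\sum_{i=1}^n W_i},\qquad W_i=a_i\,\frac{\lambda_k(X_i)}{\lambda_i(X_i)}.
\]
Set $D_n=\sum_{i=1}^n W_i$. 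Three structural facts drive the argument, all for the specific $k$ under consideration, for which $a_k\in(0,\infty)$ and $b_k:=\sup_{x\in\Xcal}\lambda_k(x)/\lambda_*(x)\in(0,\infty)$: (i) $W_k=a_k$ is a positive constant, so $D_n\ge a_k>0$ for $n\ge k$; (ii) each $W_i\le b_k$ is bounded; and (iii) $D_n\to\infty$ almost surely, since $W_i=p_i(X_i)\cdot\lambda_k(X_i)/\lambda_*(X_i)\ge a_k\,p_i(X_i)$ with $p_i$ as in \eqref{eq:Bi}, and $\sum_i p_i(X_i)=\infty$ almost surely was established in Section~\ref{sec:construct_Xcheck}.

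Expanding the target term by term, $\EE{(\widetilde{P}_n\circ(\lambda_k/\lambda_*))(A)\cdot\one{X_{[\ell]\setminus k}\in B}}=\sum_{i=1}^n\EE{W_i\one{X_i\in A}\one{X_{[\ell]\setminus k}\in B}/D_n}$. The heart of the proof is to handle each term with $i>\ell$ by a \emph{weighted swap} of coordinates $k$ and $i$. I would apply Proposition \ref{prop:wtd_exch_equiv} with the nonnegative function
\[
f(x)=\frac{a_i\,\lambda_k(x_i)\,\lambda_k(x_k)\,\one{x_i\in A}\,\one{x_{[\ell]\setminus k}\in B}}{\sum_{j=1}^n W_j(x)},
\]
chosen so that $f(X)/(\lambda_k(X_k)\lambda_i(X_i))$ equals the $i$-th summand. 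The swap turns $\one{X_i\in A}$ into $\one{X_k\in A}$, and crucially, because $i>\ell$, it leaves $\one{X_{[\ell]\setminus k}\in B}$ untouched. The price is that $D_n$ is replaced by the perturbed denominator $D_n^{ki}=D_n-W_i+a_i\lambda_k(X_k)/\lambda_i(X_k)$, giving $\EE{W_i\one{X_i\in A}\one{X_{[\ell]\setminus k}\in B}/D_n}=\EE{W_i\one{X_k\in A}\one{X_{[\ell]\setminus k}\in B}/D_n^{ki}}$.

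The main obstacle is precisely this denominator: $D_n$ is shared across all summands and is \emph{not} invariant under the weighted swap, so the hope that the summands telescope into $\EE{\one{X_k\in A}\one{X_{[\ell]\setminus k}\in B}\cdot\sum_i W_i/D_n}=\PP{X_k\in A,X_{[\ell]\setminus k}\in B}$ must be justified by showing the single-swap perturbation is asymptotically negligible. I would control three error sources, each vanishing as $n\to\infty$ by dominated convergence (the integrands are uniformly bounded using $a_k\le D_n,D_n^{ki}$ and $W_i\le b_k$, and tend to $0$ almost surely because $D_n\to\infty$): first, the finitely many terms $i\le\ell$ contribute at most $\sum_{i\le\ell}\EE{W_i/D_n}\to0$; second, replacing $D_n^{ki}$ by $D_n$ costs at most $\EE{\sum_{i>\ell}W_i\,\lvert 1/D_n^{ki}-1/D_n\rvert}$, which I bound via $\lvert D_n-D_n^{ki}\rvert\le 2b_k$ together with $D_n,D_n^{ki}\ge a_k$ to obtain a uniformly bounded integrand that is eventually at most $4b_k/D_n$; and third, $\sum_{i>\ell}W_i/D_n=1-\sum_{i\le\ell}W_i/D_n\to1$ almost surely. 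Assembling these yields $\EE{(\widetilde{P}_n\circ(\lambda_k/\lambda_*))(A)\one{X_{[\ell]\setminus k}\in B}}\to\EE{\one{X_k\in A}\one{X_{[\ell]\setminus k}\in B}}=\PP{X_k\in A,X_{[\ell]\setminus k}\in B}$, which is exactly the claim.
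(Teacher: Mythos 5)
Your proof is correct and takes essentially the same route as the paper's: both rest on Proposition \ref{prop:wtd_exch_equiv} applied to the pair $(k,i)$ for each $i>\ell$ (so that $\one{X_{[\ell]\backslash k}\in B}$ is unaffected by the swap), and both control the same single-term perturbation of the shared denominator, using the divergence guaranteed by \eqref{eq:suff_condition}. The only differences are bookkeeping: the paper runs the argument in the opposite direction (inserting a normalized weight sum over $i=\ell+1,\dots,n$ into the probability and swapping toward the empirical average, then closing with a deterministic error bound), whereas you expand the empirical average, swap toward the probability, and close with dominated convergence---both are valid.
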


Thus to show \eqref{eq:cond_kl}, we only need to show that     
\[
\lim_{n\to\infty} \left| 
\EE{(\widetilde{P}_n\circ(\lambda_k/\lambda_*))(A) \cdot 
\one{X_{[\ell]\backslash k}\in B}} - 
\EE{(\widetilde{P}_*\circ\lambda_k)(A) \cdot 
\one{X_{[\ell ]\backslash k}\in B}} \right| = 0,
\]
for which it is sufficient to verify
\[
\lim_{n\to\infty}
\EE{ \left| (\widetilde{P}_n\circ(\lambda_k/\lambda_*))(A) - 
(\widetilde{P}_*\circ\lambda_k)(A) \right|} = 0.
\]  
Since the term inside the expectation above is at most 1 in absolute value, by
the dominated convergence theorem, it suffices to verify that
\begin{equation}\label{eqn:Ptilde_Ptilden_step}
\left|(\widetilde{P}_n\circ(\lambda_k/\lambda_*))(A)  - 
(\widetilde{P}_*\circ\lambda_k)(A)\right| \asto 0.
\end{equation}
By construction, we have \smash{$\widetilde{P}_*\circ\lambda_k =
  \widetilde{P}\circ(\lambda_k/\lambda_*)$}, so we calculate
\[
\left| (\widetilde{P}_n\circ(\lambda_k/\lambda_*))(A) - 
(\widetilde{P}_*\circ\lambda_k)(A) \right|
= \left| \frac{\int_{A} \frac{\lambda_k(x)}{\lambda_*(x)}\,\d\widetilde{P}_n(x)}
{\int_\Xcal \frac{\lambda_k(x)}{\lambda_*(x)}\,\d\widetilde{P}_n(x)}  -
\frac{\int_{A} \frac{\lambda_k(x)}{\lambda_*(x)}\,\d\widetilde{P}(x)}
{\int_\Xcal \frac{\lambda_k(x)}{\lambda_*(x)}\,\d\widetilde{P}(x)} \right|.
\]
Since $\lambda_k(x)/\lambda_*(x)$ is positive and bounded by our choice of $k$,
it suffices to show that
\[
\int_{A'} \frac{\lambda_k(x)}{\lambda_*(x)} \, \d\widetilde{P}_n(x) \asto
\int_{A'} \frac{\lambda_k(x)}{\lambda_*(x)} \, \d\widetilde{P}(x).
\]
holds for any $A'\in\BX$, which is true because we have constructed
\smash{$\widetilde{P}$} to be the almost sure weak limit of
\smash{$\widetilde{P}_n$}, as in \eqref{eqn:Ptilde_is_limit}.  We have therefore
established that \eqref{eqn:Ptilde_Ptilden_step} holds, and so we have shown
that \eqref{eq:P_lambdak_conditional} holds for our choice of $k$.

\subsubsection{General case}

Now that the two claims are established for our particular choice of $k$, we
will next show that both claims hold with $\ell$ in place of $k$, for any
$\ell\geq 1$ with $\ell\neq k$.

First, we need to show, for any $\ell\neq k$,
\[
0 < \int_\Xcal \lambda_\ell(x)\,\d\widetilde{P}_*(x)<\infty
\] holds almost
surely, to verify \eqref{eq:lambdak_defined} (with $\ell$ in place of $k$).  By
definition of \smash{$\widetilde{P}_*$,} we have
\[ 
\int_\Xcal \lambda_\ell(x)\,\d\widetilde{P}_*(x)
=  \int_\Xcal \frac{\lambda_\ell(x)}{\lambda_*(x)}\,\d\widetilde{P}(x) >0,
\]
surely, as \smash{$\frac{\lambda_\ell(x)}{\lambda_*(x)}$} is always positive,
and \smash{$\widetilde{P}\in\Pcal_\Xcal$}. Therefore the lower bound holds
surely. Next we address the upper bound.  We calculate
\begin{align*}
1 
&= \Ep{Q}{\frac{\lambda_k(X_k)\lambda_\ell(X_\ell)}{\lambda_k(X_k)\lambda_\ell(X_\ell)}}\\
&= \Ep{Q}{\frac{\lambda_k(X_\ell)\lambda_\ell(X_k)}{\lambda_k(X_k)\lambda_\ell(X_\ell)}} \;\; \textnormal{(by Proposition \ref{prop:wtd_exch_equiv})}\\
&= \Ep{Q}{\Epst{Q}{\frac{\lambda_\ell(X_k)}{\lambda_k(X_k)}}{X_{-k}} \frac{\lambda_k(X_\ell)}{\lambda_\ell(X_\ell)}}\\
&= \Ep{Q}{\int_\Xcal\frac{\lambda_\ell(x)}{\lambda_k(x)}\,\d(\widetilde{P}_*\circ\lambda_k)(x) \cdot \frac{\lambda_k(X_\ell)}{\lambda_\ell(X_\ell)}},
\end{align*}
where the last step holds since we have proved \eqref{eq:P_lambdak_conditional}
for $k$ (and recall that we have shown that
\smash{$\widetilde{P}_*\circ\lambda_k$} is well-defined, surely). Next we have 
\[
\int_\Xcal\frac{\lambda_\ell(x)}{\lambda_k(x)}\,\d(\widetilde{P}_*\circ\lambda_k)
= \frac{\int_\Xcal\lambda_\ell(x)\,\d\widetilde{P}_*(x)}{\int_\Xcal\lambda_k(x)\,\d\widetilde{P}_*(x)},
\]
and therefore, 
\[
1 = \Ep{Q}{\int_\Xcal\lambda_\ell(x)\,\d\widetilde{P}_*(x) \cdot
  \frac{\lambda_k(X_\ell)/\lambda_\ell(X_\ell)}{\int_\Xcal\lambda_k(x)\,\d\widetilde{P}_*(x)}}.
\]
Note that 
\smash{$\lambda_k(X_\ell)/\lambda_\ell(X_\ell) /
  \int_\Xcal\lambda_k(x)\,\d\widetilde{P}_*(x)>0$}  
almost surely---in the numerator, $\lambda_k,\lambda_\ell$ both take positive
finite values, while in the denominator, we apply \eqref{eq:lambdak_defined}
with our choice of $k$.  Therefore,
\smash{$\int_\Xcal\lambda_\ell(x)\,\d\widetilde{P}_*(x) <\infty$} almost surely,
which verifies that \eqref{eq:lambdak_defined} holds with $\ell$ in place of
$k$. 

Finally, we need to show, for any $\ell\neq k$ and for $X\sim Q$,
\[
\PPst{X_\ell\in A}{X_{-\ell}} \aseq (\widetilde{P}_* \circ\lambda_\ell)(A), \quad \textnormal{for all $A\in\BX$},
\]
to verify that \eqref{eq:P_lambdak_conditional} holds with $\ell$ in place of
$k$.  Equivalently, we need to show that, for any $A\in\BX$ and $B\in\BXinf$,
\[
\PP{X_\ell\in A,X_{-\ell}\in B} =
\EE{(\widetilde{P}_*\circ\lambda_\ell)(A)\cdot\one{X_{-\ell}\in B}},
\] 
since \smash{$\widetilde{P}_*\circ\lambda_\ell$} is well-defined almost surely,
but not necessarily surely, we should interpret the expected value as being
computed only over the almost sure event that
\smash{$\widetilde{P}_*\circ\lambda_\ell$} is well-defined. Since $\BXinf$ is
the product $\sigma$-algebra, it suffices to show that, for all $n\geq \ell$ and
all $B\in\mathcal{B}(\Xcal^{n-1})$, 
\[
\PP{X_\ell\in A,X_{[n]\backslash\ell}\in B} =
\EE{(\widetilde{P}_*\circ\lambda_\ell)(A)\cdot\one{X_{[n]\backslash\ell}\in B}}.
\]

We have
\begin{align*}
\PP{X_\ell\in A,X_{[n]\backslash\ell}\in B} 
&=\EE{\one{X_\ell\in A}\cdot \one{X_{[n]\backslash\ell}\in B} \cdot \frac{\lambda_k(X_k)\lambda_\ell(X_\ell)}{\lambda_k(X_k)\lambda_\ell(X_\ell)}}\\
&=\EE{\one{(X^{k\ell})_\ell\in A}\cdot \one{(X^{k\ell})_{[n]\backslash\ell}\in B} \cdot \frac{\lambda_k((X^{k\ell})_k)\lambda_\ell((X^{k\ell})_\ell)}{\lambda_k(X_k)\lambda_\ell(X_\ell)}}\\
&=\EE{\one{X_k\in A}\cdot \one{(X^{k\ell})_{[n]\backslash\ell}\in B} \cdot \frac{\lambda_k(X_\ell)\lambda_\ell(X_k)}{\lambda_k(X_k)\lambda_\ell(X_\ell)}}\\
&=\EE{\EEst{\one{X_k\in A}\cdot \frac{\lambda_\ell(X_k)}{\lambda_k(X_k)}}{X_{-k}}\cdot  \one{(X^{k\ell})_{[n]\backslash\ell}\in B} \cdot \frac{\lambda_k(X_\ell)}{\lambda_\ell(X_\ell)}}\\
&=\EE{\Ep{X'\sim \widetilde{P}_*\circ\lambda_k}{ \one{X'\in A}\cdot \frac{\lambda_\ell(X')}{\lambda_k(X')}}\cdot  \one{(X^{k\ell})_{[n]\backslash\ell}\in B} \cdot \frac{\lambda_k(X_\ell)}{\lambda_\ell(X_\ell)}},
\end{align*}
where the second step holds by Proposition \ref{prop:wtd_exch_equiv} since
$X\sim Q$ where $Q$ is $\lambda$-weighted exchangeable, while the fourth step
holds since \smash{$(X^{k\ell})_{[n]\backslash\ell}$} does not depend on $X_k$
by construction, and the final step holds by \eqref{eq:P_lambdak_conditional}
with our choice of $k$.  Next, 
\begin{align*}
\Ep{X'\sim \widetilde{P}_*\circ\lambda_k}{ \one{X'\in A}\cdot \frac{\lambda_\ell(X')}{\lambda_k(X')}}
&=\frac{\int_\Xcal \one{x\in A}\cdot \lambda_\ell(x)\,\d\widetilde{P}_*(x)}{\int_\Xcal \lambda_k(x)\,\d\widetilde{P}_*(x)}\\
&=\frac{\int_\Xcal \one{x\in A}\cdot \lambda_\ell(x)\,\d\widetilde{P}_*(x)}{\int_\Xcal \lambda_\ell(x)\,\d\widetilde{P}_*(x)}\cdot\frac{\int_\Xcal \frac{\lambda_\ell(x)}{\lambda_k(x)}\cdot \lambda_k(x)\,\d\widetilde{P}_*(x)}{\int_\Xcal \lambda_k(x)\,\d\widetilde{P}_*(x)}\\
&=(\widetilde{P}_*\circ\lambda_\ell)(A) \cdot \Ep{X'\sim \widetilde{P}_*\circ\lambda_k}{\frac{\lambda_\ell(X')}{\lambda_k(X')}}
\end{align*}
almost surely, where we use the fact that \smash{$\int_\Xcal
\lambda_\ell(x)\,\d\widetilde{P}_*(x)\in(0,\infty)$} and thus
\smash{$\widetilde{P}_*\circ\lambda_\ell$} is well-defined, almost surely, by 
\eqref{eq:lambdak_defined} applied with $\ell$ in place of $k$.  Returning to
the calculations above, then,
\begin{align*}
\PP{X_\ell\in A,X_{[n]\backslash\ell}\in B} 
&=\EE{(\widetilde{P}_*\circ\lambda_\ell)(A) \cdot  \Ep{X'\sim \widetilde{P}_*\circ\lambda_k}{ \frac{\lambda_\ell(X')}{\lambda_k(X')}}\cdot  \one{(X^{k\ell})_{[n]\backslash\ell}\in B} \cdot \frac{\lambda_k(X_\ell)}{\lambda_\ell(X_\ell)}}\\
&=\EE{(\widetilde{P}_*\circ\lambda_\ell)(A) \cdot \EEst{ \frac{\lambda_\ell(X_k)}{\lambda_k(X_k)}}{X_{-k}}\cdot  \one{(X^{k\ell})_{[n]\backslash\ell}\in B} \cdot \frac{\lambda_k(X_\ell)}{\lambda_\ell(X_\ell)}},
\end{align*}
where the second step again applies \eqref{eq:P_lambdak_conditional} with our
choice of $k$.  Next, by Lemma \ref{lem:Ftail_measurable}, as in
\eqref{eqn:construct_f} we can construct an $\Ftail$-measurable function
$f:\Xcal^\infty\to[0,1]$, such that \smash{$f(X)\aseq
(\widetilde{P}_*\circ\lambda_\ell)(A)$}. We then have
\begin{align*}
\PP{X_\ell\in A,X_{[n]\backslash\ell}\in B} 
&=\EE{f(X) \cdot \EEst{ \frac{\lambda_\ell(X_k)}{\lambda_k(X_k)}}{X_{-k}}\cdot  \one{(X^{k\ell})_{[n]\backslash\ell}\in B} \cdot \frac{\lambda_k(X_\ell)}{\lambda_\ell(X_\ell)}}\\
&=\EE{f(X) \cdot   \one{(X^{k\ell})_{[n]\backslash\ell}\in B} \cdot  \frac{\lambda_k(X_\ell)\lambda_\ell(X_k)}{\lambda_k(X_k)\lambda_\ell(X_\ell)}}\\
&=\EE{f(X^{k\ell}) \cdot   \one{X_{[n]\backslash\ell}\in B} \cdot  \frac{\lambda_k(X_k)\lambda_\ell(X_\ell)}{\lambda_k(X_k)\lambda_\ell(X_\ell)}}\\
&=\EE{f(X) \cdot   \one{X_{[n]\backslash\ell}\in B}}\\
&=\EE{(\widetilde{P}_*\circ\lambda_\ell)(A) \cdot   \one{X_{[n]\backslash\ell}\in B}},
\end{align*}
where the third step holds by Proposition \ref{prop:wtd_exch_equiv}, and the
second and fourth steps hold since $f$ is $\Ftail$-measurable (and thus also
measurable with respect to $\sigma(X_{-k})\supseteq\Ftail$, for the second step
where we apply the tower law, and with respect to $\Ecal_\infty\supseteq\Ftail$
so that $f(X)=f(X^{k\ell})$, for the fourth step).  This completes the proof
that \eqref{eq:P_lambdak_conditional} holds with $\ell$ in place of $k$.
 
\subsection{Proof of Lemma \ref{lem:cond_Ptilde}}\label{app:cond_Ptilde}

First, we verify that, for sufficiently large $n$, it must hold that
\smash{$\sum_{j=\ell+1}^n \frac {\inf_{x\in\Xcal} \,
\lambda_j(x)/\lambda_*(x)}{\lambda_j(x')/\lambda_k(x')}>0$} for all $x'\in\Xcal$.
Indeed, we have
\begin{align*}
\sum_{j=\ell+1}^n \frac{\inf_{x\in\Xcal} \, \lambda_j(x)/\lambda_*(x)}{\lambda_j(x')/\lambda_k(x')}
&\geq \sum_{j=\ell+1}^n \frac{\inf_{x\in\Xcal} \, \lambda_j(x)/\lambda_*(x)}
{\sup_{x\in\Xcal} \, \lambda_j(x)/\lambda_k(x)}\\
&\geq \inf_{x\in\Xcal} \, \lambda_k(x)/\lambda_*(x)\cdot \sum_{j=\ell+1}^n \frac{\inf_{x\in\Xcal} \, \lambda_j(x)/\lambda_*(x)}
{\sup_{x\in\Xcal} \, \lambda_j(x)/\lambda_*(x)}.
\end{align*}
The first term is positive by our choice of $k$, while the second term is
positive for sufficiently large $n$ by our assumption that the sufficient
condition \eqref{eq:suff_condition} holds.

Next, fixing any sufficiently large $n>\ell$, we calculate 
\begin{align*}
&\PP{X_k \in A, X_{[\ell]\backslash k}\in B} 
= \EE{\one{X_k \in A}\cdot\one{X_{[\ell]\backslash k}\in B}}\\
&=\EE{\one{X_k \in A} \cdot \one{X_{[\ell]\backslash k}\in B} \cdot
\sum_{i=\ell+1}^n \frac{\frac
{\inf_{x\in\Xcal} \, \lambda_i(x)/\lambda_*(x)}{\lambda_i(X_i)/\lambda_k(X_i)}}
{\sum_{j=\ell+1}^n \frac
{\inf_{x\in\Xcal} \, \lambda_j(x)/\lambda_*(x)}{\lambda_j(X_j)/\lambda_k(X_j)}}} \\
&= \sum_{i=\ell+1}^n \EE{\one{X_k \in A} \cdot \one{X_{[\ell]\backslash k}\in B} 
\cdot \frac{\frac
{\inf_{x\in\Xcal} \, \lambda_i(x)/\lambda_*(x)}{\lambda_i(X_i)/\lambda_k(X_i)}}
{\sum_{j=\ell+1}^n \frac
{\inf_{x\in\Xcal} \, \lambda_j(x)/\lambda_*(x)}{\lambda_j(X_j)/\lambda_k(X_j)}} \cdot 
\frac{\lambda_i(X_i)\lambda_k(X_k)}{\lambda_i(X_i)\lambda_k(X_k)}} \\ 
&=\sum_{i=\ell+1}^n\EE{\one{(X^{ik})_k \in A} \cdot 
\one{(X^{ik})_{[\ell]\backslash k}\in B} \cdot 
\frac{\frac{\inf_{x\in\Xcal} \, \lambda_i(x)/\lambda_*(x)}{\lambda_i((X^{ik})_i)/\lambda_k((X^{ik})_i)}}
{\sum_{j=\ell+1}^n \frac
{\inf_{x\in\Xcal} \, \lambda_j(x)/\lambda_*(x)}{\lambda_j((X^{ik})_j)/\lambda_k((X^{ik})_j)}} \cdot 
\frac{\lambda_i((X^{ik})_i)\lambda_k((X^{ik})_k)}
{\lambda_i(X_i)\lambda_k(X_k)}},
\end{align*}
where in the last step we apply Proposition \ref{prop:wtd_exch_equiv}. 
As \smash{$(X^{ik})_{[\ell]\backslash k}=X_{[\ell]\backslash k}$} for any
$i>\ell$ by definition, we can simplify the above to
\begin{align*}
&\PP{X_k \in A, X_{[\ell]\backslash k}\in B} \\
&=\EE{\one{X_{[\ell]\backslash k}\in B} \cdot \sum_{i=\ell+1}^n   
\frac{\frac
{\inf_{x\in\Xcal} \, \lambda_i(x)/\lambda_*(x)}{\lambda_i(X_k)/\lambda_k(X_k)}\cdot \one{X_i \in A}}
{\left( \sum_{j=\ell+1}^n \frac
{\inf_{x\in\Xcal} \, \lambda_j(x)/\lambda_*(x)}{\lambda_j(X_j)/\lambda_k(X_j)} \right) + \Delta_i} \cdot 
\frac{\lambda_i(X_k)\lambda_k(X_i)}{\lambda_i(X_i)\lambda_k(X_k)} }\\ 
&= \EE{\one{X_{[\ell]\backslash k}\in B} \cdot \sum_{i=\ell+1}^n 
\frac{\frac
{\inf_{x\in\Xcal} \, \lambda_i(x)/\lambda_*(x)}{\lambda_i(X_i)/\lambda_k(X_i)} \cdot 
\one{X_i \in A}}{\left(\sum_{j=\ell+1}^n \frac
{\inf_{x\in\Xcal} \, \lambda_j(x)/\lambda_*(x)}{\lambda_j(X_j)/\lambda_k(X_j)} \right) + \Delta_i}},
\end{align*}
where 
\[
\Delta_i = \frac
{\inf_{x\in\Xcal} \, \lambda_i(x)/\lambda_*(x)}{\lambda_i(X_k)/\lambda_k(X_k)}  - \frac
{\inf_{x\in\Xcal} \, \lambda_i(x)/\lambda_*(x)}{\lambda_i(X_i)/\lambda_k(X_i)}.
\]
Recall the weighted empirical distribution $\widetilde{P}_n$ defined in
\eqref{eq:Ptilde}. We can calculate
\[
(\widetilde{P}_n\circ(\lambda_k/\lambda_*))(A)  
= \frac{ \sum_{i=1}^n \frac
{\inf_{x\in\Xcal} \, \lambda_i(x)/\lambda_*(x)}{ \lambda_i(X_i)/\lambda_k(X_i)} \cdot \one{X_i\in A}} 
{\sum_{j=1}^n \frac
{\inf_{x\in\Xcal} \, \lambda_j(x)/\lambda_*(x)}{ \lambda_j(X_j)/\lambda_k(X_j)}},
\]
and so 
\begin{multline*}
\left| (\widetilde{P}_n\circ(\lambda_k/\lambda_*))(A) - 
\sum_{i=\ell+1}^n \frac{\frac
{\inf_{x\in\Xcal} \, \lambda_i(x)/\lambda_*(x)}{\lambda_i(X_i)/\lambda_k(X_i)}\cdot \one{X_i \in A}}
{\left( \sum_{j=\ell+1}^n \frac
{\inf_{x\in\Xcal} \, \lambda_j(x)/\lambda_*(x)}{\lambda_j(X_j)/\lambda_k(X_j)} \right) +\Delta_i} \right| \\ 
=\left|\sum_{i=1}^n \frac{\frac
{\inf_{x\in\Xcal} \, \lambda_i(x)/\lambda_*(x)}{\lambda_i(X_i)/\lambda_k(X_i)} \cdot \one{X_i \in A}} 
{\sum_{j=1}^n \frac
{\inf_{x\in\Xcal} \, \lambda_j(x)/\lambda_*(x)}{\lambda_j(X_j)/\lambda_k(X_j)}} -
\sum_{i=\ell+1}^n \frac{\frac
{\inf_{x\in\Xcal} \, \lambda_i(x)/\lambda_*(x)}{\lambda_i(X_i)/\lambda_k(X_i)} \cdot \one{X_i \in A}}
{\left(\sum_{j=\ell+1}^n \frac
{\inf_{x\in\Xcal} \, \lambda_j(x)/\lambda_*(x)}{\lambda_j(X_j)/\lambda_k(X_j)} \right) +\Delta_i} \right| \\ 
\leq \frac{\ell+1}{\left(\frac{\inf_{x\in\Xcal} \, \lambda_k(x)/\lambda_*(x)}
{\sup_{x\in\Xcal} \, \lambda_k(x)/\lambda_*(x)} \cdot
\sum_{j=\ell+1}^n \frac{\inf_{x\in\Xcal} \, \lambda_j(x)/\lambda_*(x)}
{\sup_{x\in\Xcal} \, \lambda_j(x)/\lambda_*(x)} -1\right)_+},
\end{multline*}
where the last step follows from some direct calculations, making use of the
fact that
\[
|\Delta_i| \leq \sup_{x\in\Xcal} \, \lambda_k(x)/\lambda_*(x)
\]
holds surely for every $i$. Therefore,
\begin{align*}
&\left| \PP{X_k \in A, X_{[\ell]\backslash k}\in B} - 
\EE{(\widetilde{P}_n\circ(\lambda_k/\lambda_*))(A) \cdot 
\one{X_{[\ell]\backslash k}\in B}} \right|\\
&= \left|\EE{\one{X_{[\ell]\backslash k}\in B} \cdot  
\left(\sum_{i=\ell+1}^n \frac{\frac
{\inf_{x\in\Xcal} \, \lambda_i(x)/\lambda_*(x)}{\lambda_i(X_i)/\lambda_k(X_i)} \cdot \one{X_i \in A}}
{\sum_{j=\ell+1}^n \frac
{\inf_{x\in\Xcal} \, \lambda_j(x)/\lambda_*(x)}{\lambda_j(X_j)/\lambda_k(X_j)} +\Delta_i}  -
(\widetilde{P}_n\circ(\lambda_k/\lambda_*))(A)\right)} \right| \\
&\leq \EE{\one{X_{[\ell]\backslash k}\in B} \cdot  
\left|\sum_{i=\ell+1}^n \frac{\frac
{\inf_{x\in\Xcal} \, \lambda_i(x)/\lambda_*(x)}{\lambda_i(X_i)/\lambda_k(X_i)} \cdot \one{X_i \in A}}
{\sum_{j=\ell+1}^n \frac
{\inf_{x\in\Xcal} \, \lambda_j(x)/\lambda_*(x)}{\lambda_j(X_j)/\lambda_k(X_j)} +\Delta_i} -
(\widetilde{P}_n\circ(\lambda_k/\lambda_*))(A)\right|} \\
&\leq \frac{\ell+1}{\left(\frac{\inf_{x\in\Xcal}\lambda_k(x)/\lambda_*(x)}
{\sup_{x\in\Xcal} \, \lambda_k(x)/\lambda_*(x)}  \cdot 
\sum_{j=\ell+1}^n \frac{\inf_{x\in\Xcal}\,\lambda_*(x)/\lambda_j(x)}
{\sup_{x\in\Xcal} \, \lambda_*(x)/\lambda_j(x)} -1\right)_+}.
\end{align*}
By our choice of $k$, we know that the first factor in the denominator above is
positive, and by \eqref{eq:suff_condition},
\[
\lim_{n\to\infty} \sum_{j=\ell+1}^n 
\frac{\inf_{x\in\Xcal} \, \lambda_*(x)/\lambda_j(x)}
{\sup_{x\in\Xcal} \, \lambda_*(x)/\lambda_j(x)}
=\infty.
\]
Therefore,
\[
\lim_{n\to\infty} \left| \PP{X_k \in A, X_{[\ell]\backslash k}\in B} - 
\EE{(\widetilde{P}_n\circ(\lambda_k/\lambda_*))(A)\cdot 
\one{X_{[\ell]\backslash k}\in B}}\right| = 0,
\]
which completes the proof.

\section{Proofs supporting auxiliary results}

\subsection{Proof of Proposition \ref{prop:Xcal_size_at_least_3}
}\label{sec:proof_prop:Xcal_size_at_least_3}

Since $|\Xcal|\geq 3$, we can choose a partition $\Xcal = \Xcal_0\cup
\Xcal_1\cup\Xcal_2$ for some nonempty $\Xcal_0,\Xcal_1,\Xcal_2\in\BX$. Now
define 
\[
\lambda_i(x) = \begin{cases} 
e^{-i} & x\in \Xcal_{\textnormal{mod}(i,3)}, \\ 
1 & x \not\in \Xcal_{\textnormal{mod}(i,3)},
\end{cases} \quad i \geq 1.
\]
First we examine the sufficient condition \eqref{eq:suff_condition}.
Let $\lambda_*:\Xcal\to\R_+$ be any measurable function. 
Fix any $x_\ell\in\Xcal_\ell$ for each $\ell = 0,1,2$. Then we have
\begin{multline*}
\sum_{i=1}^\infty \frac{\inf_{x\in\Xcal}\lambda_i(x)/\lambda_*(x)}
{\sup_{x\in\Xcal}\lambda_i(x)/\lambda_*(x)}
\leq \sum_{i=1}^\infty \frac{
\min_{\ell=0,1,2}\lambda_i(x_\ell)/\lambda_*(x_\ell)}
{\max_{\ell=0,1,2}\lambda_i(x_\ell)/\lambda_*(x_\ell)}\\
\leq \frac{\max_{\ell=0,1,2}\lambda_*(x_\ell)}
{\min_{\ell=0,1,2}\lambda_*(x_\ell)}
\cdot \sum_{i=1}^\infty \frac{\min_{\ell=0,1,2}\lambda_i(x_\ell)}
{\max_{\ell=0,1,2}\lambda_i(x_\ell)}
= \frac{\max_{\ell=0,1,2}\lambda_*(x_\ell)}
{\min_{\ell=0,1,2}\lambda_*(x_\ell)}
\cdot \sum_{i=1}^\infty e^{-i} < \infty,
\end{multline*}
and therefore the sufficient condition \eqref{eq:suff_condition} is not
satisfied. Now we check the necessary condition \eqref{eq:nec_condition}. By
construction, we can see that $\Mcal_\Xcal(\lambda) = \{P\in\Mcal(\Xcal):
0<P(\Xcal)<\infty\}$. Fix any $A,A^c\in\BX$ with $P(A),P(A^c)>0$. Since $\Xcal =
\Xcal_0\cup \Xcal_1\cup\Xcal_2$ is a partition, we can choose some
$\ell_A\in\{0,1,2\}$ such that \smash{$P(A\cap \Xcal_{\ell_A}) >0$}, and
similarly some \smash{$\ell_{A^c}\in\{0,1,2\}$} such that \smash{$P(A^c\cap
  \Xcal_{\ell_{A^c}})>0$}.  

Now let \smash{$\ell\in\{0,1,2\}\backslash\{\ell_A,\ell_{A^c}\}$}, so that for
any $i$ with $\textnormal{mod}(i,3) = \ell$, we have $\lambda_i(x) = 1$ for 
\smash{$x\in\Xcal_{\ell_A}$} and also for \smash{$x\in\Xcal_{\ell_{A^c}}$}. Then
for any such $i$, we calculate
\[
(P\circ\lambda_i)(A)
= \frac{\int_A \lambda_i(x)\,\d{P}(x)}{\int_{\Xcal} \lambda_i(x)\,\d{P}(x)} 
\geq \frac{\int_{A\cap \Xcal_{\ell_A}} \lambda_i(x)\,\d{P}(x)}
{\int_{\Xcal}\lambda_i(x)  \,\d{P}(x)} 
\geq \frac{\int_{A\cap\Xcal_{\ell_A}} \,\d{P}(x)}{\int_{\Xcal} \,\d{P}(x)} 
= \frac{P(A\cap \Xcal_{\ell_A})}{P(\Xcal)}>0,
\]
where the third step holds since $\lambda_i(x)=1$ for
\smash{$x\in\Xcal_{\ell_A}$} by the choice of $i$, and $\lambda_i(x)\leq 1$ for
all $x$. Similarly, for any $i$ with $\textnormal{mod}(i,3) = \ell$, 
\[
(P\circ\lambda_i)(A^c) \geq  \frac{P(A^c\cap \Xcal_{\ell_{A^c})}}{P(\Xcal)}>0.
\]
Therefore, we have
\[
\sum_{i=1}^\infty\min\{(P\circ\lambda_i)(A),(P\circ\lambda_i)(A^c\}
\geq \sum_{i=1}^\infty\one{\textnormal{mod}(i,3)=\ell}\cdot 
\min\left\{ \frac{P(A\cap \Xcal_{\ell_A)}}{P(\Xcal)}, 
\frac{P(A^c\cap \Xcal_{\ell_{A^c})}}{P(\Xcal)}\right\} = \infty,
\]
proving that the condition \eqref{eq:nec_condition} is satisfied.

\subsection{Proof of Theorem \ref{thm:finite_case}}
\label{sec:proof_thm:finite_case}

Once we apply the results of Theorems \ref{thm:main} and \ref{thm:nec_condition}, 
we see that we only need to verify that
\[
\Lambda_{\dF}\supseteq\{\textnormal{$\lambda$
  satisfying \eqref{eq:nec_condition}}\},
\]
i.e., that the necessary condition \eqref{eq:nec_condition} is in fact
sufficient for proving $\lambda\in\Lambda_{\dF}$, for the finite case. 

\subsubsection{An equivalent characterization via a graph} 

First, we give an equivalent characterization of the
necessary condition for this finite case. For any nonempty subset
$S\subseteq\Xcal$, define an undirected graph on vertices $S$, denoted by
$G_S=(S,E_S)$, where the set of edges is given by 
\begin{equation}\label{eqn:edges_graph}
E_S = \left\{ (x_0,x_1)\in S\times S \, : \, \sum_{i=1}^\infty \frac{\min\{\lambda_i(x_0),\lambda_i(x_1)\}}{\max_{x\in S} \lambda_i(x)} = \infty\right\}.\end{equation} 
\begin{lemma}\label{lem:nec_equiv_finite_case}
If $|\Xcal|<\infty$ then, for any $\lambda\in\Lambda^\infty$, 
$\lambda$ satisfies the  condition \eqref{eq:nec_condition} if and only if, for
every nonempty $S\subseteq\Xcal$, $G_S$ is a connected graph.
\end{lemma}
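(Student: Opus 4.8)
The plan is to prove both implications by contraposition, after rewriting the analytic condition \eqref{eq:nec_condition} as a statement about a weighted cut. Since $\Xcal$ is finite, every $P\in\Mcal_\Xcal(\lambda)$ is a finite positive measure supported on some nonempty $S=\{x:P(\{x\})>0\}$, and a set $A$ with $P(A),P(A^c)>0$ amounts, on $S$, to a partition $S=S_0\cup S_1$ into nonempty pieces $S_0=A\cap S$ and $S_1=A^c\cap S$. Writing $a_i(x)=\lambda_i(x)P(\{x\})$, the defining formula for $P\circ\lambda_i$ yields
\[
(P\circ\lambda_i)(A)=\frac{\sum_{x\in S_0}a_i(x)}{\sum_{x\in S}a_i(x)},\qquad (P\circ\lambda_i)(A^c)=\frac{\sum_{x\in S_1}a_i(x)}{\sum_{x\in S}a_i(x)},
\]
so the summand in \eqref{eq:nec_condition} equals $c_i/d_i$ with $c_i=\min\{\sum_{x\in S_0}a_i(x),\sum_{x\in S_1}a_i(x)\}$ and $d_i=\sum_{x\in S}a_i(x)$. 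The whole task then reduces to comparing $\sum_i c_i/d_i$ with the edge sums $\sum_i\min\{\lambda_i(x_0),\lambda_i(x_1)\}/\max_{x\in S}\lambda_i(x)$ appearing in \eqref{eqn:edges_graph}.

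Two elementary comparisons will drive the argument. First, because $S$ is finite and $P(\{x\})>0$ on $S$, the quantities $p_{\min}=\min_{x\in S}P(\{x\})$ and $p_{\max}=\max_{x\in S}P(\{x\})$ are positive and finite, and a short calculation (using $\max_{x\in S}a_i(x)\le d_i\le|S|\max_{x\in S}a_i(x)$ together with $\max_{x\in S}a_i(x)\asymp\max_{x\in S}\lambda_i(x)$) gives, for every $i$ and every $x_0,x_1\in S$, the two-sided bound
\[
\frac{\min\{a_i(x_0),a_i(x_1)\}}{d_i}\;\asymp\;\frac{\min\{\lambda_i(x_0),\lambda_i(x_1)\}}{\max_{x\in S}\lambda_i(x)}
\]
with multiplicative constants depending only on $p_{\min},p_{\max},|S|$ — crucially \emph{independent of} $i$, so that the two series converge or diverge together. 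Second, I will establish the purely combinatorial sandwich
\[
\min\{a_i(x_0),a_i(x_1)\}\;\le\;c_i\;\le\;\sum_{x_0\in S_0}\sum_{x_1\in S_1}\min\{a_i(x_0),a_i(x_1)\},
\]
valid for any single pair $x_0\in S_0,\,x_1\in S_1$ on the left. The lower bound is immediate. The upper bound — which I expect to be the main obstacle — follows by assuming without loss of generality $\sum_{S_0}a_i\le\sum_{S_1}a_i$ (so $c_i=\sum_{x_0\in S_0}a_i(x_0)$) and checking, for each fixed $x_0$, that $a_i(x_0)\le\sum_{x_1\in S_1}\min\{a_i(x_0),a_i(x_1)\}$; this splits into the cases ``some $a_i(x_1)\ge a_i(x_0)$'' and ``all $a_i(x_1)<a_i(x_0)$'', the latter using $\sum_{S_1}a_i\ge\sum_{S_0}a_i\ge a_i(x_0)$, after which one sums over $x_0\in S_0$.

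With these in hand both directions are short. For the implication \textbf{\eqref{eq:nec_condition} $\Rightarrow$ every $G_S$ is connected}, I argue contrapositively: if some $G_S$ is disconnected, pick a partition $S=S_0\cup S_1$ into nonempty parts with no crossing edge, take $P$ uniform on $S$ and $A=S_0$; the combinatorial upper bound and the comparison give $\sum_i c_i/d_i\lesssim\sum_{x_0\in S_0,\,x_1\in S_1}\sum_i\min\{\lambda_i(x_0),\lambda_i(x_1)\}/\max_{x\in S}\lambda_i(x)$, a finite sum of finitely many convergent series, so \eqref{eq:nec_condition} fails at this $(P,A)$. For the converse, \textbf{every $G_S$ connected $\Rightarrow$ \eqref{eq:nec_condition}}, I again contrapose: if \eqref{eq:nec_condition} fails for some $P,A$, set $S=\mathrm{supp}(P)$ and $S_0,S_1$ as above; the combinatorial lower bound together with the comparison show that for \emph{every} pair $x_0\in S_0,\,x_1\in S_1$ the edge sum is dominated by $\sum_i c_i/d_i<\infty$, whence $(S_0,S_1)$ is a genuine cut of $G_S$ (both parts nonempty since $P(A),P(A^c)>0$) with no crossing edges, so $G_S$ is disconnected. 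This establishes the equivalence.
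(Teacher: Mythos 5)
Your proof is correct and follows essentially the same route as the paper's: both directions reduce the cut quantity $\min\{(P\circ\lambda_i)(A),(P\circ\lambda_i)(A^c)\}$ to sums of pairwise terms $\min\{\lambda_i(x_0),\lambda_i(x_1)\}/\max_{x\in S}\lambda_i(x)$ with constants independent of $i$, using the uniform measure on $S$ for the disconnected direction and the support of a general $P$ for the other. Your sandwich for $c_i$ in terms of $a_i(x)=\lambda_i(x)P(\{x\})$ is a repackaging of the paper's chain of inequalities (the paper obtains the upper bound via $\min\{\max_a f(a),\max_b g(b)\}=\max_{a,b}\min\{f(a),g(b)\}$ where you use a two-case analysis), so the mathematical content coincides.
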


From this point onward,
we assume that $\lambda$ satisfies \eqref{eq:nec_condition}, so that by the above lemma, $G_S$ is connected for each nonempty $S\subseteq\Xcal$.
For each  nonempty $S\subseteq\Xcal$, we fix a \emph{directed} and \emph{rooted}
spanning tree \smash{$E^*_S\subseteq E_S$}, and will work with the rooted
directed acyclic graph (DAG) \smash{$G^*_S = (S,E^*_S)$}. 

\subsubsection{Writing $Q$ via a mixture of supports} 

Next for any $x\in\Xcal^\infty$ define its support
\[
S(x) = \left\{x\in\Xcal : \sum_{i=1}^\infty\one{x_i=x} >0 \right\},
\]
and for each nonempty $S\subseteq\Xcal$ define
\[
p_S = \Pp{Q}{S(X) = S},
\]
the probability that $S(X)=S$ when we sample $X\sim Q$. We can write $Q$ as a
mixture, 
\[
Q = \sum_{S : p_S>0} p_S \cdot Q_S,
\] 
where $Q_S$ is the distribution $Q$ conditional on the event $S(X)=S$.

For any $S$ with $p_S>0$, as the event $\{S(X)=S\}$ is in $\Ecal_\infty$, $Q_S$
is also $\lambda$-weighted exchangeable (this can be verified exactly as in the
proof of Theorem \ref{thm:main}---specifically, the proof that
$\Lambda_{\dF}\subseteq\Lambda_{\01}$). In what follows, we will show that, 
for each $S$, $Q_S$ has a (weighted) de Finetti representation. As $Q$ is
ultimately a mixture of distributions $Q_S$ over subsets $S$, this will be
sufficient: if each $Q_S$ can be written as a mixture of $\lambda$-weighted
i.i.d.\ distributions, then the same is also true for $Q$.   

\subsubsection{Defining the probability measure} 

From this point on, we will work with $Q_S$ for a specific subset
$S\subseteq\Xcal$, and all probabilities and expectations should be interpreted
as taken with respect to $Q_S$, unless otherwise specified. Next we need another
lemma.

\begin{lemma}\label{lem:infinite_sum_for_finite_case}
Under the notation and assumptions above, for any \smash{$(x,x')\in E^*_S$}, 
\[
\sum_{i\geq
  1}\frac{\min\{\lambda_i(x),\lambda_i(x')\}}{\lambda_i(X_i)}\cdot\one{X_i\in\{x,x'\}}
\, = \, \infty
\]
almost surely under $Q_S$.
\end{lemma}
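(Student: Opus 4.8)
The plan is to reduce the almost-sure divergence to the infinitude of an auxiliary exchangeable subsequence, obtained by a rejection scheme that exactly cancels the weights, and then to force that subsequence to be infinite using the edge condition encoded in $E_S$. Write $m_i=\min\{\lambda_i(x),\lambda_i(x')\}$ and $Z_i=\frac{m_i}{\lambda_i(X_i)}\one{X_i\in\{x,x'\}}\in[0,1]$, so the claim is $\sum_i Z_i\aseq\infty$ under $Q_S$; observe that $\{\sum_i Z_i=\infty\}$ is a tail event (though this does not by itself supply a zero--one law). First I would draw $U_1,U_2,\dots\iidsim\mathrm{Unif}[0,1]$ independently of $X$ and accept index $i$ when $X_i\in\{x,x'\}$ and $U_i\le m_i/\lambda_i(X_i)$, i.e.\ with conditional probability $Z_i$ given $X$. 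Writing $M$ for the number of accepted indices, the conditional second Borel--Cantelli lemma gives $\{M=\infty\}\aseq\{\sum_i Z_i=\infty\}$, so it suffices to prove $M\aseq\infty$.

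The key structural input is that the accepted subsequence $\check X$ (padded by a fixed symbol once the accepted indices run out) is \emph{exchangeable}. This is proved exactly as in Lemma \ref{lem:exch_inf_subseq}: the acceptance probability $m_i/\lambda_i(X_i)$ multiplies the weight $\lambda_i(X_i)$ up to the $X$-independent constant $m_i$ (the quantity $m_i$ playing the role that $\inf_{x'}\lambda_i(x')/\lambda_*(x')$ plays there), so that the probability of accepting a prescribed index set, after multiplying by $\lambda_{i_j}(X_{i_j})\lambda_{i_k}(X_{i_k})$, is invariant under the swap $X_{i_j}\leftrightarrow X_{i_k}$ of two accepted indices; Proposition \ref{prop:wtd_exch_equiv} then yields exchangeability of $\check X$. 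Since $\check X$ takes values only in $\{x,x'\}$, applying the de Finetti--Hewitt--Savage theorem (Theorem \ref{thm:definetti}) together with the law of large numbers (Theorem \ref{thm:lln}) shows that on $\{M<\infty\}$ the de Finetti parameter of $\check X$ is forced to be a point mass at the padding symbol. Choosing the padding symbol outside $\{x,x'\}$ (in the binary case $\Xcal=\{x,x'\}$, by combining the two possible padding symbols) this rules out any acceptance at all on $\{M<\infty\}$, giving the dichotomy $M\in\{0,\infty\}$ almost surely.

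It remains to exclude $M=0$, and this is where the hypothesis $(x,x')\in E_S$---namely $\sum_i m_i/\mu_i=\infty$ with $\mu_i=\max_{y\in S}\lambda_i(y)$, from \eqref{eqn:edges_graph}---must enter, together with the fact that $Q_S$ charges every point of $S$, so that both $x$ and $x'$ occur. I would control $\Pp{Q_S}{M=0}=\EE{\prod_i(1-Z_i)}$, equivalently the forward conditional acceptance probabilities $\EE{Z_i\mid X_1,\dots,X_{i-1}}$, and argue that the edge divergence forces the relevant product $\prod_i(1-a_i)$ to vanish, via the same product-versus-sum mechanism ($\prod(1-a_i)>0\iff\sum a_i<\infty$) used in the proof of Theorem \ref{thm:nec_condition}. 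The needed lower bounds on acceptance should come from weighted exchangeability: using Proposition \ref{prop:Xi_condition_on_Ecalm} one can write $\EE{Z_i\mid\Ecal_n}=m_i\big(\widetilde P_{n,i}(\{x\})/\lambda_i(x)+\widetilde P_{n,i}(\{x'\})/\lambda_i(x')\big)$ and pass to the limit.

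I expect this last step to be the main obstacle. The difficulty is that weighted exchangeability does not isolate the mass on $\{x,x'\}$ at an individual index: a term-by-term comparison only yields $\sum_{y\in S}\EE{\one{X_i=y}/\lambda_i(X_i)}\ge 1/\mu_i$ summed over \emph{all} of $S$, not the $\{x,x'\}$ portion, so neither $\sum_i\EE{Z_i}=\infty$ nor---more importantly---its almost-sure conditional analogue can be extracted one position at a time. Converting the divergence $\sum_i m_i/\mu_i=\infty$ into almost-sure infinite acceptance, while respecting that $x$ and $x'$ need not recur at every scale, is the delicate heart of the argument and is essentially dual to the total-variation computation underlying Theorem \ref{thm:nec_condition}.
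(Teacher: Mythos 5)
Your proposal is incomplete, and the missing step is precisely the heart of the lemma---you say so yourself in your final paragraph. The reduction via rejection sampling (accept index $i$ with conditional probability $Z_i$ given $X$, so that $\{M=\infty\}\aseq\{\sum_i Z_i=\infty\}$ by the two Borel--Cantelli lemmas) is fine, but the argument then stalls in two places. First, the claim that the padded subsequence $\check X$ is exchangeable ``exactly as in Lemma \ref{lem:exch_inf_subseq}'' is circular: that lemma's proof conditions on the events $\{I_1=i_1,\dots,I_n=i_n\}$ and hence only controls $\check X$ on $\{M\geq n\}$. In the setting of Theorem \ref{thm:suff_condition} this is harmless because $M=\infty$ almost surely is established \emph{before} that lemma is invoked, whereas here the finiteness of $M$ is exactly what you are trying to rule out. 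Indeed, if $\PP{0<M<\infty}>0$ the padded sequence is genuinely not exchangeable: $\PP{\check X_1\in\{x,x'\},\ \check X_2=x_0}=\PP{M=1}$, while the swapped event $\{\check X_1=x_0,\ \check X_2\in\{x,x'\}\}$ has probability zero. So the de Finetti route to the dichotomy $M\in\{0,\infty\}$ presupposes that dichotomy. Second, and more fundamentally, even granting the dichotomy you offer no argument excluding $M=0$; your own diagnosis---that term-by-term first-moment bounds cannot isolate the mass on $\{x,x'\}$ at individual indices, and that a product/total-variation computation is not available---is accurate, and nothing in the proposal overcomes it.

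The paper closes exactly this gap by a different, essentially combinatorial route that never needs to control the conditional probability of $\{x,x'\}$ index by index. Split the indices as $\mathcal{I}=\{i:\lambda_i(x)\leq\lambda_i(x')\}$ and its complement; since $(x,x')\in E^*_S$, at least one of the two halves inherits the divergence, say $\sum_{i\in\mathcal{I}}\lambda_i(x)/\max_{x''\in S}\lambda_i(x'')=\infty$. On $\mathcal{I}$ the weight ratio for the single symbol $x$ cancels exactly, $\frac{\min\{\lambda_i(x),\lambda_i(x')\}}{\lambda_i(X_i)}\one{X_i=x}=\one{X_i=x}$, so it suffices to show that $x$ occurs infinitely often along $\mathcal{I}$---a statement about one symbol, which is what defeats your ``isolation'' obstacle. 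The subsequence $X_{\mathcal{I}}$ is weighted exchangeable, and Lemma \ref{lem:zero_or_inf} (proved by a first-moment counting contradiction: conditioning on the $\Ecal_\infty$-event of exactly $k$ occurrences preserves weighted exchangeability, and then the pairwise bound $p_i\leq p_j\cdot(\textnormal{weight ratio})$ forces $\sum_j p_j=\infty\neq k$---no de Finetti representation is needed) gives that the occurrences of $x$ along $\mathcal{I}$ number either zero or infinity almost surely. Finally, zero occurrences along $\mathcal{I}$ is excluded by an augmentation trick: since $x\in S$ and $Q_S$ is supported on sequences whose support equals $S$, one can choose $n$ with $\PP{\sum_{i=1}^n\one{X_i=x}=0}<p$, where $p=\PP{\sum_{i\in\mathcal{I}}\one{X_i=x}=0}$, and apply Lemma \ref{lem:zero_or_inf} again to the index set $\mathcal{I}\cup\{1,\dots,n\}$ to contradict $p>0$. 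These three ingredients---the split reducing to a single exactly-cancelled symbol, the zero-or-infinity lemma for weighted exchangeable sequences, and the augmentation argument exploiting the support property of $Q_S$---are what your proposal is missing.
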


Next, \smash{for each $(x,x')\in E^*_S$}, define
\[
\widetilde{P}_n(x;x') = \frac{\sum_{i=1}^n
\frac{\min\{\lambda_i(x),\lambda_i(x')\}}{\lambda_i(X_i)}\cdot\one{X_i=x}}{\sum_{i=1}^n 
\frac{\min\{\lambda_i(x),\lambda_i(x')\}}{\lambda_i(X_i)}\cdot\one{X_i \in\{x,x'\}}}.
\]
By Lemma \ref{lem:infinite_sum_for_finite_case}, the denominator tends  to
infinity almost surely as $n\to\infty$. In particular, it holds almost surely
that the denominator is positive for sufficiently large $n$. In other words, for
sufficiently large $n$, \smash{$\widetilde{P}_n(x;x')$} is well-defined.  

Next we need to verify that this quantity converges. As in the proof of Theorem
\ref{thm:suff_condition}, the following lemma will be proved by extracting
an exchangeable subsequence.

\begin{lemma}\label{lem:convergence_for_finite_case}
Under the notation and assumptions above, for any \smash{$(x,x')\in E^*_S$},
there is a $\Ftail$-measurable random variable \smash{$\widetilde{P}(x;x')\in
  (0,1)$} such that 
\[
\widetilde{P}(x;x') = \lim_{n\to\infty} \widetilde{P}_n(x;x')
\]
almost surely under $Q_S$.
\end{lemma}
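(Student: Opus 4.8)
The plan is to adapt the exchangeable-subsequence construction from the proof of Theorem \ref{thm:suff_condition} to the two-point ``restricted'' problem determined by the edge $(x,x')$. First I would set up a rejection sampling scheme: draw $U_1,U_2,\dots \iidsim \mathrm{Unif}[0,1]$ independently of $X$, and for each $i$ set $B_i = \one{X_i \in \{x,x'\}} \cdot \one{U_i \le p_i(X_i)}$, where the acceptance probability is $p_i(y) = \min\{\lambda_i(x),\lambda_i(x')\}/\lambda_i(y) \in (0,1]$ for $y\in\{x,x'\}$. The point of this choice is that $\EEst{B_i}{X}$ is exactly the $i$th term in the denominator of $\widetilde P_n(x;x')$, while the numerator $\min\{\lambda_i(x),\lambda_i(x')\}$ depends only on $i$ (given the fixed pair $x,x'$) and the factor $1/\lambda_i(X_i)$ is what cancels the weights under weighted exchangeability. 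Lemma \ref{lem:infinite_sum_for_finite_case} guarantees $\sum_i \EEst{B_i}{X} = \infty$ almost surely under $Q_S$, so the conditional second Borel--Cantelli lemma gives $M = \sum_i B_i = \infty$ a.s.; thus the accepted subsequence $\check X = (X_{I_1},X_{I_2},\dots)$, enumerating $\{i : B_i = 1\}$, is an infinite sequence valued in $\{x,x'\}$.

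Second, I would show $\check X$ is exchangeable by repeating the computation of Lemma \ref{lem:exch_inf_subseq}. Conditioning on $\{I_1=i_1,\dots,I_n=i_n\}$, the conditional acceptance probability $\PPst{E}{X}$ factors as $h(X)/(\lambda_{i_j}(X_{i_j})\lambda_{i_k}(X_{i_k}))$, where the index-only factors $\min\{\lambda_{i_r}(x),\lambda_{i_r}(x')\}$ together with the rejection factors for indices $\ell\notin\{i_1,\dots,i_n\}$ (which equal $1$ whenever $X_\ell\notin\{x,x'\}$) are collected into $h$, and $h$ is invariant under swapping $X_{i_j}\leftrightarrow X_{i_k}$ since those two coordinates no longer appear in it. Proposition \ref{prop:wtd_exch_equiv} applied to the $\lambda$-weighted exchangeable $Q_S$ then yields the swap-invariance that, via Proposition \ref{prop:exch_equiv}, gives exchangeability of $\check X$. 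Next, writing $\check P_m(\{x\})$ for the unweighted empirical frequency of $x$ among the first $m$ terms of $\check X$, a Petrov-type strong law for the independent bounded variables $B_i$ conditional on $X$—exactly as in Lemma \ref{lem:Pbar_equiv}—shows that $\widetilde P_n(x;x')$ and $\check P_{M_n}(\{x\})$ share the same almost sure limit. Since $\check X$ is exchangeable and two-valued, de Finetti (Theorem \ref{thm:definetti}) and the law of large numbers (Theorem \ref{thm:lln}) provide a random $\theta\in[0,1]$ with $\check P_m(\{x\}) \asto \theta$, and I set $\widetilde P(x;x') = \theta$, so that $\widetilde P_n(x;x') \asto \widetilde P(x;x')$ a.s.

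Third, I would establish the two remaining properties. For $\Ftail$-measurability, I write $\widetilde P(x;x') = \limsup_n \widetilde P_n(x;x')$ (a function of $X$ alone) and repeat the truncation argument of Lemma \ref{lem:Ftail_measurable}: replacing the index range $\{1,\dots,n\}$ by $\{m+1,\dots,n\}$ changes $\widetilde P_n(x;x')$ by at most $m$ over a divergent denominator, so the $\limsup$ is $\Fcal_m$-measurable for every $m$ and hence $\Ftail$-measurable. The delicate point, and what I expect to be the main obstacle, is the strict containment $\widetilde P(x;x') \in (0,1)$. Since $\check X$ is exchangeable and valued in $\{x,x'\}$, de Finetti represents it as a mixture of i.i.d.\ Bernoulli($\theta$) draws, so $\{\theta=0\}$ and $\{\theta=1\}$ coincide almost surely with the events that $x$, respectively $x'$, occurs only finitely often in $\check X$. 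Thus it suffices to show that both values recur infinitely often, i.e.\ that $\sum_i p_i(x)\one{X_i=x} = \infty$ and $\sum_i p_i(x')\one{X_i=x'} = \infty$ almost surely under $Q_S$. Here I would use that the edge condition $(x,x')\in E^*_S\subseteq E_S$ forces the deterministic series $\sum_i \min\{\lambda_i(x),\lambda_i(x')\}/\lambda_i(x)$—which dominates $\sum_i \min\{\lambda_i(x),\lambda_i(x')\}/\max_{y\in S}\lambda_i(y) = \infty$ from \eqref{eqn:edges_graph}—to diverge, and likewise for $x'$; the passage from these deterministic divergences to the random, indicator-weighted divergences is precisely the argument proving Lemma \ref{lem:infinite_sum_for_finite_case}, now run for each single value $x$ and $x'$ separately rather than for the pair. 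Combining the two individual divergences with the conditional second Borel--Cantelli lemma yields infinite recurrence of both values, hence $\theta\in(0,1)$ almost surely.
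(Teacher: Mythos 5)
Your construction and most of your argument follow the paper's proof: the same rejection-sampling scheme $B_i=\one{U_i\le p_i(X_i)}$ with $p_i(y)=\min\{\lambda_i(x),\lambda_i(x')\}/\lambda_i(y)$, the same exchangeability computation for the accepted subsequence $\check{X}$, the same appeal to Theorems \ref{thm:definetti} and \ref{thm:lln} plus a Lemma \ref{lem:Pbar_equiv}--type comparison to identify $\lim_{n\to\infty}\widetilde{P}_n(x;x')$, and the same truncation device for tail-measurability. (One small wrinkle in the latter: because the denominator of $\widetilde{P}_n(x;x')$ diverges only almost surely here, $\limsup_n\widetilde{P}_n(x;x')$ is merely a.s.\ equal to an $\Fcal_m$-measurable variable for each $m$, which does not make it $\Fcal_m$-measurable; the paper handles this by taking the iterated limit $\limsup_{\ell}\limsup_n\widetilde{P}_{\ell,n}(x;x')$, which is genuinely $\Fcal_\ell$-measurable for every $\ell$ and hence $\Ftail$-measurable.)

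The genuine gap is in the step you yourself flag as delicate, namely $\widetilde{P}(x;x')\in(0,1)$. You reduce it to the two claims $\sum_i p_i(x)\one{X_i=x}=\infty$ and $\sum_i p_i(x')\one{X_i=x'}=\infty$ a.s.\ under $Q_S$, and assert these follow by running the argument of Lemma \ref{lem:infinite_sum_for_finite_case} ``for each single value.'' That argument cannot deliver per-value divergence. Its mechanism is to restrict to the indices on which the value in question attains the minimum, so that the weighted summand becomes the plain indicator $\one{X_i=\cdot}$, and then to count occurrences via Lemma \ref{lem:zero_or_inf} together with the support property of $Q_S$; the edge condition \eqref{eqn:edges_graph} guarantees a divergent series on such an index set for at least one of the two values, but not for both. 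Concretely, take $\lambda_i(x)=1$ and $\lambda_i(x')=i^{-0.9}$: the edge condition holds, yet for the value $x$ your required sum is $\sum_i i^{-0.9}\one{X_i=x}$, whose terms are not bounded below; all that occurrence counting can give is that $\{i:X_i=x\}$ is a.s.\ infinite, which by itself is compatible with a convergent sum (an infinite index set such as the perfect squares has $\sum_k k^{-1.8}<\infty$). So the passage from ``infinitely many occurrences'' to ``divergent weighted sum'' is missing---and the claim you need, while true, is essentially equivalent (via conditional Borel--Cantelli applied to the accepted subsequence) to the conclusion of the very lemma being proved, so invoking it this way is circular. The paper avoids the quantitative claim entirely with a softer argument: since $\widetilde{P}'(x;x')=\limsup_n\widetilde{P}_n(x;x')$ is a function of $X$ alone, on the $X$-measurable event $\{\widetilde{P}'(x;x')=0\}$ the accepted subsequence a.s.\ contains no $x$; but conditionally on $X$ the probability of accepting no $x$ equals $\prod_{i:X_i=x}(1-p_i)$, which can equal $1$ only if $X$ contains no $x$ at all (each factor is strictly less than $1$), contradicting the definition of $Q_S$, under which $x\in S$ appears at least once a.s. That argument requires only ``at least one occurrence,'' not a divergent weighted sum, and it is the idea your sketch is missing.
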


Next, we will convert these values to a probability distribution
\smash{$\{\widetilde{P}(x)\}_{x\in\Xcal}$}, which again is $\Ftail$-measurable. 
Let $x_*$ be the root of the rooted DAG \smash{$G^*_S$}. For $x\in S$, consider
the unique path in \smash{$G^*_S$}, from the root $x_*$ to $x$: this path can be
expressed as  a finite sequence $y_0(x) = x_*, y_1(x),\dots, y_{k(x)}(x) = x$
such that \smash{$(y_{i-1}(x),y_i(x))\in E^*_S$} for each $i$. 
Then define 
\[
\widetilde{P}'(x) =  \prod_{i=1}^{k(x)}
\frac{1-\widetilde{P}(y_{i-1}(x);y_i(x))}{\widetilde{P}(y_{i-1}(x);y_i(x))},
\]
where for the case $x=x_*$ (and so $k(x)=k(x_*)=0$), the empty product should be
interpreted as $1$, i.e., \smash{$\widetilde{P}'(x_*)=1$}. Note that, since
\smash{$\widetilde{P}(x;x')\in(0,1)$} for all \smash{$(x,x')\in E^*_S$},
therefore the above product is positive and finite for all $x\in S$. 
Then define
\[
\widetilde{P}(x) = \begin{cases}\frac{\widetilde{P}'(x)}{\sum_{x'\in
      S}\widetilde{P}'(x')} & x\in S,\\ 0 & x\not\in S.
\end{cases}\]

Now we will see the motivation for this construction: for any \smash{$(x,x')\in
  E^*_S$}, we now verify that  
\begin{equation}\label{eqn:finite_case_ratio_property}
\frac{\widetilde{P}(x)}{\widetilde{P}(x)+\widetilde{P}(x')} =
\widetilde{P}(x;x').
\end{equation}
In other words, \smash{$\widetilde{P}(x;x')$} defines the conditional
probability \smash{$\Ppst{\widetilde{P}}{X=x}{X\in\{x,x'\}}$}, for each edge  
\smash{$(x,x')\in E^*_S$} in the rooted DAG. To see why this holds, first note
that since \smash{$G^*_S$} is a rooted DAG, the path from the root to $x$, and
the path from the root to $x'$, are each unique. Recall that
\[
y_0(x) = x_* , y_1(x), \dots, y_{k(x)}(x) = x
\]
is the path from $x_*$ to $x$. Then the unique path from the root to $x'$ must
therefore be equal to 
\[y_0(x) = x_* , y_1(x), \dots, y_{k(x)}(x) = x,  y_{k(x)+1}(x) = x'.\]
In other words, we have $k(x') = k(x)+1$, and
\[
y_i(x') = \begin{cases} y_i(x) & i =1,\dots, k(x),\\ x' & i = k(x)+1 =
  k(x').\end{cases}
\]
We can therefore calculate
\begin{multline*}
\widetilde{P}'(x') =  \prod_{i=1}^{k(x')} \frac{1-\widetilde{P}(y_{i-1}(x');y_i(x'))}{\widetilde{P}(y_{i-1}(x');y_i(x'))}\\
=  \left[\prod_{i=1}^{k(x')-1 } \frac{1-\widetilde{P}(y_{i-1}(x');y_i(x'))}{\widetilde{P}(y_{i-1}(x');y_i(x'))}\right] \cdot \frac{1-\widetilde{P}(y_{k(x')-1}(x');y_{k(x')}(x'))}{\widetilde{P}(y_{k(x')-1}(x');y_{k(x')}(x'))} \\
=  \left[\prod_{i=1}^{k(x)} \frac{1-\widetilde{P}(y_{i-1}(x);y_i(x))}{\widetilde{P}(y_{i-1}(x);y_i(x))}\right] \cdot \frac{1-\widetilde{P}(x;x')}{\widetilde{P}(x;x')}
= \widetilde{P}'(x)\cdot  \frac{1-\widetilde{P}(x;x')}{\widetilde{P}(x;x')},
\end{multline*}
which proves the desired equality as we then have
\[
\frac{\widetilde{P}(x)}{\widetilde{P}(x) + \widetilde{P}(x')}=\frac{\widetilde{P}'(x)}{\widetilde{P}'(x) + \widetilde{P}'(x')}
= \frac{\widetilde{P}'(x)}{\widetilde{P}'(x) +\widetilde{P}'(x)\cdot  \frac{1-\widetilde{P}(x;x')}{\widetilde{P}(x;x')}} = \widetilde{P}(x;x').\]

\subsubsection{Representing $Q_S$ as a mixture distribution}

Finally, we need to check that it holds that $Q_S$ is equal to the mixture
defined by \smash{$\widetilde{P}\circ\lambda$}, for the random measure
\smash{$\widetilde{P}$}.  In fact, as in the proof of Theorem
\ref{thm:suff_condition}, it is sufficient to verify that 
\[
\PPst{X_k = x}{X_{-k}} \aseq (\widetilde{P}\circ\lambda_k)(x)
\]
for all $k$ and all $x\in S$ (this claim is analogous to the claim
\eqref{eq:P_lambdak_conditional} in Lemma \ref{lem:check_P_circ_lambda_k}; the
argument that appears after  Lemma \ref{lem:check_P_circ_lambda_k}, in the proof
of Theorem \ref{thm:suff_condition}, explains why verifying this claim is
sufficient). Since $G_S$ is a connected graph, it is sufficient to check that 
\[
\frac{\PPst{X_k = x}{X_{-k}}}{\PPst{X_k = x'}{X_{-k}}} \aseq
\frac{\widetilde{P}(x) \lambda_k(x)}{\widetilde{P}(x') \lambda_k(x')},
\]
for all $k$ and for all \smash{$(x,x')\in E^*_S$}, or equivalently, 
\[
\frac{\PPst{X_k = x}{X_{-k}}}{\PPst{X_k = x'}{X_{-k}}} \overset{\textnormal{a.s.}}{\geq}
\frac{\widetilde{P}(x) \lambda_k(x)}{\widetilde{P}(x') \lambda_k(x')},
\textnormal{ \ and \ \ }\frac{\PPst{X_k = x}{X_{-k}}}{\PPst{X_k = x'}{X_{-k}}} \overset{\textnormal{a.s.}}{\leq}
\frac{\widetilde{P}(x) \lambda_k(x)}{\widetilde{P}(x') \lambda_k(x')}.\]
Since the proofs of these two inequalities are essentially identical, we will prove only the first.
By the property
\eqref{eqn:finite_case_ratio_property} calculated above, equivalently, we need
to check that 
\[
\PPst{X_k = x}{X_{-k}}  \overset{\textnormal{a.s.}}{\geq} \frac{\widetilde{P}(x;x')}{1-\widetilde{P}(x;x') }
\cdot \frac{\lambda_k(x)}{\lambda_k(x')} \cdot \PPst{X_k = x'}{X_{-k}},
\]
for all $k$ and for all \smash{$(x,x')\in E^*_S$}. (Note that
\smash{$\widetilde{P}(x;x')$} is $\Ftail$-measurable, and is therefore 
measurable with respect to $\sigma(X_{-k})$.) By definition of conditional
probability, this is equivalent to checking that 
\[
\PP{X_k = x,X_{-k} \in A} \geq\frac{\lambda_k(x)}{\lambda_k(x')} \cdot  \EE{
  \frac{\widetilde{P}(x;x')}{1-\widetilde{P}(x;x') }\cdot \one{X_k=x'}
  \one{X_{-k} \in A}}
\]
for all $A\in\BXinf$. By construction of the product $\sigma$-algebra, it is
sufficient to check that, for every $\ell\geq k$ and any
$A\in\Bcal(\Xcal^{\ell-1})$, 
\begin{equation}\label{eqn:a_s_ineq}
\PP{X_k = x,X_{[\ell]\backslash k} \in A} \geq \frac{\lambda_k(x)}{\lambda_k(x')}
\cdot \EE{ \frac{\widetilde{P}(x;x')}{1-\widetilde{P}(x;x')} \cdot\one{X_k=x'}
  \one{X_{[\ell]\backslash k} \in A}}.
\end{equation}
We can derive that
\[
\frac{\widetilde{P}(x;x')}{1-\widetilde{P}(x;x')} =
\lim_{n\to\infty}\frac{\sum_{i=1}^n
  \frac{\min\{\lambda_i(x),\lambda_i(x')\}}{\lambda_i(X_i)}\cdot\one{X_i=x}}{\sum_{i=1}^n
  \frac{\min\{\lambda_i(x),\lambda_i(x')\}}{\lambda_i(X_i)}\cdot\one{X_i=x'}}=
\lim_{n\to\infty}\frac{\sum_{i=\ell+1}^n
  \frac{\min\{\lambda_i(x),\lambda_i(x')\}}{\lambda_i(X_i)}\cdot\one{X_i=x}}{1 +
  \sum_{i=\ell+1}^n
  \frac{\min\{\lambda_i(x),\lambda_i(x')\}}{\lambda_i(X_i)}\cdot\one{X_i=x'}},
\]
 almost surely, where the first step holds from Lemma \ref{lem:convergence_for_finite_case},
while the second step is true since, by Lemmas
\ref{lem:infinite_sum_for_finite_case} and
\ref{lem:convergence_for_finite_case}, the sums in the numerator and denominator
both diverge. By Fatou's lemma,
\begin{multline*}
\EE{ \frac{\widetilde{P}(x;x')}{1-\widetilde{P}(x;x') } \cdot\one{X_k=x'} \one{X_{[\ell]\backslash k} \in A}}
\\\leq \lim\inf_{n\to\infty}\EE{\frac{\sum_{i=\ell+1}^n
    \frac{\min\{\lambda_i(x),\lambda_i(x')\}}{\lambda_i(X_i)}\cdot\one{X_i=x}}{1
    + \sum_{i=\ell+1}^n
    \frac{\min\{\lambda_i(x),\lambda_i(x')\}}{\lambda_i(X_i)}\cdot\one{X_i=x'}}\cdot\one{X_k=x'}
  \one{X_{[\ell]\backslash k} \in A}}.
\end{multline*}
For each $n$, we calculate
\begin{align*}
&\EE{\frac{\sum_{i=\ell+1}^n  \frac{\min\{\lambda_i(x),\lambda_i(x')\}}{\lambda_i(X_i)}\cdot\one{X_i=x}}{1 + \sum_{i=\ell+1}^n  \frac{\min\{\lambda_i(x),\lambda_i(x')\}}{\lambda_i(X_i)}\cdot\one{X_i=x'}}\cdot\one{X_k=x'} \one{X_{[\ell]\backslash k} \in A}}\\
&=\sum_{i=\ell+1}^n\EE{\frac{\frac{\min\{\lambda_i(x),\lambda_i(x')\}}{\lambda_i(X_i)}\cdot\one{X_i=x}}{1 + \sum_{i'=\ell+1}^n  \frac{\min\{\lambda_{i'}(x),\lambda_{i'}(x')\}}{\lambda_{i'}(X_{i'})}\cdot\one{X_{i'}=x'}}\cdot\one{X_k=x'} \one{X_{[\ell]\backslash k} \in A} \cdot \frac{\lambda_i(X_i)\lambda_k(X_k)}{\lambda_i(X_i)\lambda_k(X_k)}}\\
&=\sum_{i=\ell+1}^n\EE{\frac{\frac{\min\{\lambda_i(x),\lambda_i(x')\}}{\lambda_i(X_k)}\cdot\one{X_k=x}}{1 + \sum_{i'=\ell+1}^n  \frac{\min\{\lambda_{i'}(x),\lambda_{i'}(x')\}}{\lambda_{i'}((X^{ik})_{i'})}\cdot\one{(X^{ik})_{i'}=x'}}\cdot\one{X_i=x'} \one{(X^{ik})_{[\ell]\backslash k} \in A}\frac{\lambda_i(X_k)\lambda_k(X_i)}{\lambda_i(X_i)\lambda_k(X_k)}}\\
&=\sum_{i=\ell+1}^n\EE{\frac{\frac{\min\{\lambda_i(x),\lambda_i(x')\}}{\lambda_i(X_i)}\cdot\one{X_k=x}}{1 + \sum_{i'=\ell+1}^n  \frac{\min\{\lambda_{i'}(x),\lambda_{i'}(x')\}}{\lambda_{i'}((X^{ik})_{i'})}\cdot\one{(X^{ik})_{i'}=x'}}\cdot\one{X_i=x'} \one{X_{[\ell]\backslash k} \in A}} \cdot \frac{\lambda_k(x')}{\lambda_k(x)}\\
&=\sum_{i=\ell+1}^n\EE{\frac{\frac{\min\{\lambda_i(x),\lambda_i(x')\}}{\lambda_i(X_i)}\cdot\one{X_i=x'}}{1 + \sum_{i'=\ell+1}^n  \frac{\min\{\lambda_{i'}(x),\lambda_{i'}(x')\}}{\lambda_{i'}(X_{i'})}\cdot\one{X_{i'}=x'} + \Delta_i}\cdot\one{X_k=x} \one{X_{[\ell]\backslash k} \in A}}\cdot \frac{\lambda_k(x')}{\lambda_k(x)},
\end{align*}
where the second step holds by Proposition \ref{prop:wtd_exch_equiv}, and where 
\[
\Delta_i =
\frac{\min\{\lambda_i(x),\lambda_i(x')\}}{\lambda_i(x')}\cdot(\one{X_k=x'} -
\one{X_i = x'}) \in [-1,1].
\]
Therefore,
\begin{align*}
&\left|\frac{\lambda_k(x)}{\lambda_k(x')}\EE{\frac{\sum_{i=\ell+1}^n  \frac{\min\{\lambda_i(x),\lambda_i(x')\}}{\lambda_i(X_i)}\cdot\one{X_i=x}}{1 + \sum_{i=\ell+1}^n  \frac{\min\{\lambda_i(x),\lambda_i(x')\}}{\lambda_i(X_i)}\cdot\one{X_i=x'}}\cdot\one{X_k=x'} \one{X_{[\ell]\backslash k} \in A}} - \PP{X_k=x,X_{[\ell]\backslash k} \in A}\right|\\
&=\left| \sum_{i=\ell+1}^n\EE{\frac{\frac{\min\{\lambda_i(x),\lambda_i(x')\}}{\lambda_i(X_i)}\cdot\one{X_i=x'}}{1 + \sum_{i'=\ell+1}^n  \frac{\min\{\lambda_{i'}(x),\lambda_{i'}(x')\}}{\lambda_{i'}(X_{i'})}\cdot\one{X_{i'}=x'} + \Delta_i}\cdot\one{X_k=x} \one{X_{[\ell]\backslash k} \in A}} - \PP{X_k=x,X_{[\ell]\backslash k} \in A}\right|\\
&\leq \EE{\left| \sum_{i=\ell+1}^n\frac{\frac{\min\{\lambda_i(x),\lambda_i(x')\}}{\lambda_i(X_i)}\cdot\one{X_i=x'}}{1 + \sum_{i'=\ell+1}^n  \frac{\min\{\lambda_{i'}(x),\lambda_{i'}(x')\}}{\lambda_{i'}(X_{i'})}\cdot\one{X_{i'}=x'} + \Delta_i}\cdot\one{X_k=x} \one{X_{[\ell]\backslash k} \in A} - \one{X_k=x,X_{[\ell]\backslash k} \in A}\right|}\\
&\leq \EE{\left| \sum_{i=\ell+1}^n\frac{\frac{\min\{\lambda_i(x),\lambda_i(x')\}}{\lambda_i(X_i)}\cdot\one{X_i=x'}}{1 + \sum_{i'=\ell+1}^n  \frac{\min\{\lambda_{i'}(x),\lambda_{i'}(x')\}}{\lambda_{i'}(X_{i'})}\cdot\one{X_{i'}=x'} + \Delta_i}-1\right|}.
\end{align*}
Now we bound the sum inside the expectation. If
$ \sum_{i'=\ell+1}^n  \frac{\min\{\lambda_{i'}(x),\lambda_{i'}(x')\}}{\lambda_{i'}(X_{i'})}\cdot\one{X_{i'}=x'}=0$ then the sum is zero.
If not, then since $\Delta_i \in[-1,1]$ by construction, the sum is upper bounded as
\begin{multline*}\sum_{i=\ell+1}^n\frac{\frac{\min\{\lambda_i(x),\lambda_i(x')\}}{\lambda_i(X_i)}\cdot\one{X_i=x'}}{1 + \sum_{i'=\ell+1}^n  \frac{\min\{\lambda_{i'}(x),\lambda_{i'}(x')\}}{\lambda_{i'}(X_{i'})}\cdot\one{X_{i'}=x'} + \Delta_i} \\\leq \sum_{i=\ell+1}^n\frac{\frac{\min\{\lambda_i(x),\lambda_i(x')\}}{\lambda_i(X_i)}\cdot\one{X_i=x'}}{1 + \sum_{i'=\ell+1}^n  \frac{\min\{\lambda_{i'}(x),\lambda_{i'}(x')\}}{\lambda_{i'}(X_{i'})}\cdot\one{X_{i'}=x'} + (-1)}  = 1,\end{multline*}
and is lower bounded as
\begin{multline*}\sum_{i=\ell+1}^n\frac{\frac{\min\{\lambda_i(x),\lambda_i(x')\}}{\lambda_i(X_i)}\cdot\one{X_i=x'}}{1 + \sum_{i'=\ell+1}^n  \frac{\min\{\lambda_{i'}(x),\lambda_{i'}(x')\}}{\lambda_{i'}(X_{i'})}\cdot\one{X_{i'}=x'} + \Delta_i} \geq \sum_{i=\ell+1}^n\frac{\frac{\min\{\lambda_i(x),\lambda_i(x')\}}{\lambda_i(X_i)}\cdot\one{X_i=x'}}{1 + \sum_{i'=\ell+1}^n  \frac{\min\{\lambda_{i'}(x),\lambda_{i'}(x')\}}{\lambda_{i'}(X_{i'})}\cdot\one{X_{i'}=x'} + 1}\\  = 
\frac{\sum_{i=\ell+1}^n\frac{\min\{\lambda_i(x),\lambda_i(x')\}}{\lambda_i(X_i)}\cdot\one{X_i=x'}}{2 + \sum_{i'=\ell+1}^n  \frac{\min\{\lambda_{i'}(x),\lambda_{i'}(x')\}}{\lambda_{i'}(X_{i'})}\cdot\one{X_{i'}=x'} } = 1 - \frac{2}{2 + \sum_{i'=\ell+1}^n  \frac{\min\{\lambda_{i'}(x),\lambda_{i'}(x')\}}{\lambda_{i'}(X_{i'})}\cdot\one{X_{i'}=x'} }.\end{multline*}
In all cases, then, we have
\[\left| \sum_{i=\ell+1}^n\frac{\frac{\min\{\lambda_i(x),\lambda_i(x')\}}{\lambda_i(X_i)}\cdot\one{X_i=x'}}{1 + \sum_{i'=\ell+1}^n  \frac{\min\{\lambda_{i'}(x),\lambda_{i'}(x')\}}{\lambda_{i'}(X_{i'})}\cdot\one{X_{i'}=x'} + \Delta_i}-1\right| \leq \frac{2}{2 + \sum_{i'=\ell+1}^n  \frac{\min\{\lambda_{i'}(x),\lambda_{i'}(x')\}}{\lambda_{i'}(X_{i'})}\cdot\one{X_{i'}=x'} }.\]
Therefore,
\begin{align*}
&\left|\frac{\lambda_k(x)}{\lambda_k(x')} \lim_{n\to\infty}\EE{\frac{\sum_{i=\ell+1}^n  \frac{\min\{\lambda_i(x),\lambda_i(x')\}}{\lambda_i(X_i)}\cdot\one{X_i=x}}{1 + \sum_{i=\ell+1}^n  \frac{\min\{\lambda_i(x),\lambda_i(x')\}}{\lambda_i(X_i)}\cdot\one{X_i=x'}}}
- \PP{X_k=x,X_{[\ell]\backslash k} \in A}\right|\\
&=\lim_{n\to\infty}\left|\frac{\lambda_k(x)}{\lambda_k(x')}\EE{\frac{\sum_{i=\ell+1}^n  \frac{\min\{\lambda_i(x),\lambda_i(x')\}}{\lambda_i(X_i)}\cdot\one{X_i=x}}{1 + \sum_{i=\ell+1}^n  \frac{\min\{\lambda_i(x),\lambda_i(x')\}}{\lambda_i(X_i)}\cdot\one{X_i=x'}}\cdot\one{X_k=x'} \one{X_{[\ell]\backslash k} \in A}} - \PP{X_k=x,X_{[\ell]\backslash k} \in A}\right|\\
&\leq \lim_{n\to\infty} \EE{ \frac{2}{2 + \sum_{i'=\ell+1}^n  \frac{\min\{\lambda_{i'}(x),\lambda_{i'}(x')\}}{\lambda_{i'}(X_{i'})}\cdot\one{X_{i'}=x'}}}\\
&=0,
\end{align*}
where the last step holds since the denominator tends to infinity almost surely,
by Lemmas \ref{lem:infinite_sum_for_finite_case} and
\ref{lem:convergence_for_finite_case}. This verifies \eqref{eqn:a_s_ineq},
as desired.

\subsection{Proof of Lemma \ref{lem:nec_equiv_finite_case}}

First suppose the graph $G_S$ is disconnected for some nonempty $S\subseteq
\Xcal$. Then we can find a subset $A\subseteq S$ with $A,S\backslash A$ both
nonempty, such that $(x_0,x_1)\not\in E_S$ for all $x_0\in A,x_1\in S\backslash
A$. Let $P$ be the uniform distribution on $S$. We will now show that the
necessary condition \eqref{eq:nec_condition} fails for this $P$ and this
$A$. For any $i$ we have 
\begin{align*}
\min\{(P\circ\lambda_i)(A), (P\circ\lambda_i)(A^c)\}
&=\min\left\{\sum_{x_0\in A}(P\circ\lambda_i)(\{x_0\}),\sum_{x_1\in S\backslash A} (P\circ\lambda_i)(\{x_1\})\right\}\\
&=\min\left\{\sum_{x_0\in A} \frac{\lambda_i(x_0)}{\sum_{x\in S}\lambda_i(x)},\sum_{x_1\in S\backslash A} \frac{\lambda_i(x_1)}{\sum_{x\in S}\lambda_i(x)}\right\}\\
&\leq \min\left\{\frac{|A|\cdot\max_{x_0\in A}\lambda_i(x_0)}{\max_{x\in S}\lambda_i(x)},\frac{|S\backslash A|\cdot\max_{x_1\in S\backslash A} \lambda_i(x_1)}{\max_{x\in S}\lambda_i(x)}\right\}\\
&\leq |S| \min\left\{\max_{x_0\in A}\frac{\lambda_i(x_0)}{\max_{x\in S}\lambda_i(x)},\max_{x_1\in S\backslash A}\frac{\lambda_i(x_1)}{\max_{x\in S}\lambda_i(x)}\right\}\\
&= |S|\max_{x_0\in A,x_1\in S\backslash A} \min\left\{\frac{\lambda_i(x_0)}{\max_{x\in S}\lambda_i(x)},\frac{\lambda_i(x_1)}{\max_{x\in S}\lambda_i(x)}\right\}\\
&\leq  |S|\sum_{x_0\in A,x_1\in S\backslash A} \min\left\{\frac{\lambda_i(x_0)}{\max_{x\in S}\lambda_i(x)},\frac{\lambda_i(x_1)}{\max_{x\in S}\lambda_i(x)}\right\}.
\end{align*}
Therefore,
\[
\sum_{i=1}^\infty \min\{(P\circ\lambda_i)(A), (P\circ\lambda_i)(A^c)\}
\leq |S| \sum_{x_0\in A,x_1\in S\backslash A}\sum_{i\geq
  1}\min\left\{\frac{\lambda_i(x_0)}{\max_{x\in
      S}\lambda_i(x)},\frac{\lambda_i(x_1)}{\max_{x\in S}\lambda_i(x)}\right\}
<\infty,
\]
where the last step holds since, for all $x_0\in A$ and $x_1\in S\backslash A$,
we have $(x_0,x_1)\not\in E_S$ and therefore \smash{$\sum_{i=1}^\infty
\frac{\min\{\lambda_i(x_0),\lambda_i(x_1)\}}{\max_{x\in S} \lambda_i(x)}
<\infty$}. 

Next suppose that $G_S$ is connected for all $S$. We will show that  the
necessary condition \eqref{eq:nec_condition} must hold. Fix any
$A\subseteq\Xcal$ and any $P\in\Mcal_\Xcal(\lambda)$ with $P(A),P(A^c)>0$. Note
that, since $\lambda(x)>0$ for all $x$, we must have $P(\Xcal)<\infty$ in order
to have $P\in\Mcal_\Xcal(\lambda)$. Now let $S = \{x\in\Xcal: P(x)>0\}$ be the
support of $P$. Since $P(A),P(A^c)>0$ we must have $S\cap A\neq \emptyset$ and $S\backslash A\neq \emptyset$.
 Since $G_S$ is a connected graph we must have some $x^*_0\in  S\cap A$, 
$x^*_1\in S\backslash A$ with \smash{$\sum_{i=1}^\infty
  \frac{\min\{\lambda_i(x^*_0),\lambda_i(x^*_1)\}}{\max_{x\in S} \lambda_i(x)}
  =\infty$}.  We now calculate 
\begin{align*}
&\min\{(P\circ\lambda_i)(A), (P\circ\lambda_i)(A^c)\}\\
&=\min\left\{\sum_{x_0\in S\cap A}(P\circ\lambda_i)(\{x_0\}),\sum_{x_1\in S\backslash A} (P\circ\lambda_i)(\{x_1\})\right\}\\
&=\min\left\{\sum_{x_0\in S\cap A} \frac{P(\{x_0\}) \cdot \lambda_i(x_0)}{\sum_{x\in S}P(\{x\}) \cdot  \lambda_i(x)},\sum_{x_1\in S\backslash A} \frac{P(\{x_1\}) \cdot \lambda_i(x_1)}{\sum_{x\in S}P(\{x_1\}) \cdot \lambda_i(x)}\right\}\\
&\geq \min\left\{\sum_{x_0\in S\cap A} \frac{P(\{x_0\}) \cdot \lambda_i(x_0)}{\sum_{x\in S}P(\{x\}) \cdot  \max_{x\in S}\lambda_i(x)},\sum_{x_1\in S\backslash A} \frac{P(\{x_1\}) \cdot \lambda_i(x_1)}{\sum_{x\in S}P(\{x\}) \cdot  \max_{x\in S}\lambda_i(x)}\right\}\\
&\geq \min\left\{\frac{P(\{x^*_0\}) \cdot \lambda_i(x^*_0)}{\sum_{x\in S}P(\{x\}) \cdot  \max_{x\in S}\lambda_i(x)},\frac{P(\{x^*_1\}) \cdot \lambda_i(x^*_1)}{\sum_{x\in S}P(\{x\}) \cdot  \max_{x\in S}\lambda_i(x)}\right\}\\
&\geq \min\left\{\frac{P(\{x^*_0\})}{P(\Xcal)},\frac{P(\{x^*_1\})}{P(\Xcal)},\right\}\cdot  \min\left\{\frac{\lambda_i(x^*_0)}{\max_{x\in S}\lambda_i(x)},\frac{\lambda_i(x^*_1)}{  \max_{x\in S}\lambda_i(x)}\right\},
\end{align*}
and therefore,
\begin{multline*}\sum_{i=1}^\infty\min\{(P\circ\lambda_i)(A), (P\circ\lambda_i)(A^c)\}\\
\geq \min\left\{\frac{P(\{x^*_0\})}{P(\Xcal)},\frac{P(\{x^*_1\})}{P(\Xcal)} ,\right\}\cdot  \sum_{i=1}^\infty \min\left\{\frac{\lambda_i(x^*_0)}{\max_{x\in S}\lambda_i(x)},\frac{\lambda_i(x^*_1)}{  \max_{x\in S}\lambda_i(x)}\right\} =\infty,\end{multline*}
where the last step holds because the first term is positive (as $x^*_0,x^*_1\in
S$, and $S$ is the support of $P$), while the second term is infinite by our
choice of $x^*_0,x^*_1$. This completes the proof.

\subsection{Proof of Lemma \ref{lem:infinite_sum_for_finite_case}}

Since
\smash{$(x,x')\in E^*_S$}, we have
\[
\infty =\sum_{i=1}^\infty \frac{\min\{\lambda_i(x),\lambda_i(x')\}}{\max_{x''\in S} \lambda_i(x'')}
\leq  \sum_{i : \lambda_i(x) \leq \lambda_i(x')}\frac{\lambda_i(x)}{\max_{x''\in S} \lambda_i(x'')}
+ \sum_{i: \lambda_i(x)\geq \lambda_i(x')}\frac{\lambda_i(x')}{\max_{x''\in S} \lambda_i(x'')} .
\]
 Then at least one of the two terms
on the right-hand side must be infinite;
without loss of generality, suppose the first sum above is infinite, i.e., $\sum_{i\in\mathcal{I}} \frac{\lambda_i(x')}{\max_{x''\in S} \lambda_i(x'')} = \infty$ where 
  $\mathcal{I}= \{i\geq 1: \lambda_i(x)\leq\lambda_i(x')\}$.
In this case, we will show that \smash{$\sum_{i\in\mathcal{I}} \one{X_i = x} =
  \infty$}. Note that, since
\smash{$\frac{\min\{\lambda_i(x),\lambda_i(x')\}}{\lambda_i(X_i)}\cdot\one{X_i=x}
  = \one{X_i=x}$} for all $i\in\mathcal{I}$, this will be sufficient to
establish the lemma. 

Now consider the subsequence \smash{$X_{\mathcal{I}}$}. By definition of
weighted exchangeability, this subsequence is
\smash{$\lambda_{\mathcal{I}}$}-weighted exchangeable, and the weight sequence
\smash{$\lambda_{\mathcal{I}}$} satisfies  
\[
\sum_{j=1}^\infty\frac{(\lambda_{\mathcal{I}})_j(x)}{\max_{x''\in\Xcal}(\lambda_{\mathcal{I}})_j(x'')}
= \sum_{i\in\mathcal{I}}\frac{\lambda_i(x)}{\max_{x''\in S} \lambda_i(x'')} = \infty.
\]
Next we need another lemma.

\begin{lemma}\label{lem:zero_or_inf}
Suppose that $Q$ is a $\lambda$-weighted exchangeable distribution on a  
finite set $\Xcal$. Fix any $x\in\Xcal$, and suppose also that
\[
\sum_{i=1}^\infty\frac{\lambda_i(x)}{\max_{x'\in\Xcal}\lambda_i(x)}= \infty.
\]
Then
\[
\Pp{Q}{0< \sum_{i=1}^\infty \one{X_i = x} <\infty} = 0.
\]
That is, it holds almost surely that $X_i=x$ occurs either zero times or
infinitely many times. 
\end{lemma}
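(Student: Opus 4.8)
The plan is to bypass the subsequence-extraction machinery entirely and argue directly by contradiction, using only the weighted-swap characterization of Proposition \ref{prop:wtd_exch_equiv} together with the finiteness of $\Xcal$. Write $N=\sum_{i}\One{X_i=x}$ and let $T=\sup\{i:X_i=x\}$ be the index of the last occurrence of $x$ (with $T=0$ if $N=0$ and $T=\infty$ if $N=\infty$). Since $\{0<N<\infty\}=\bigcup_{t\geq 1}\{T=t\}$ is a countable disjoint union, it suffices to show that $\Pp{Q}{T=t}=0$ for each fixed $t\geq 1$. So I fix $t$, assume for contradiction that $\Pp{Q}{T=t}>0$, and manufacture infinitely many pairwise disjoint events whose probabilities are each bounded below by a multiple of $\Pp{Q}{T=t}$, which will be incompatible with the weight-divergence hypothesis.

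The key construction is as follows. For each $s>t$ set $D_s=\{X_i\neq x \text{ for all } i>t \text{ with } i\neq s\}$, recording that $x$ does not occur beyond position $t$ except possibly at $s$, and define $G_s=\{X_s=x\}\cap\{X_t\neq x\}\cap D_s$. The $G_s$ are pairwise disjoint (on $G_s$ one has $X_s=x$, whereas $G_{s'}$ with $s'\neq s$ forces $X_s\neq x$ through $D_{s'}$), so $\sum_{s>t}\Pp{Q}{G_s}\leq 1$. The crucial feature of $D_s$ is that it is invariant under the transposition of coordinates $t$ and $s$, since it constrains only coordinates $i>t$, $i\neq s$. Applying Proposition \ref{prop:wtd_exch_equiv} with $f(X)=\One{X\in D_s}\One{X_t=x}\One{X_s=b}$ (so that $f(X^{ts})=\One{X\in D_s}\One{X_s=x}\One{X_t=b}$) and simplifying the weight $\lambda_t(X_t)\lambda_s(X_s)$ on each support, I obtain, for every $b\neq x$,
\[
\Pp{Q}{X\in D_s,\ X_s=x,\ X_t=b}=\frac{\lambda_t(b)\,\lambda_s(x)}{\lambda_t(x)\,\lambda_s(b)}\,\Pp{Q}{X\in D_s,\ X_t=x,\ X_s=b}.
\]
Summing over $b\neq x$, and using the identities $G_s=\bigcup_{b\neq x}\{X\in D_s,X_s=x,X_t=b\}$ and $\{T=t\}=\{X\in D_s,\ X_t=x,\ X_s\neq x\}$ (valid for each fixed $s>t$), turns this into an exact expression for $\Pp{Q}{G_s}$ as a sum over the value $b=X_t$ of the $\{T=t\}$-mass.

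The final step is to lower-bound the weight ratios uniformly in $b$. Because $\Xcal$ is finite and all weights are positive and finite, $\alpha:=\min_{b\neq x}\lambda_t(b)/\lambda_t(x)>0$, and $\lambda_s(x)/\lambda_s(b)\geq \lambda_s(x)/\max_{y}\lambda_s(y)$ for every $b$; hence each coefficient above is at least $\alpha\,\lambda_s(x)/\max_y\lambda_s(y)$, independent of $b$, and summing the displayed identity over $b\neq x$ yields
\[
\Pp{Q}{G_s}\ \geq\ \alpha\,\frac{\lambda_s(x)}{\max_{y}\lambda_s(y)}\,\Pp{Q}{T=t}.
\]
Summing over $s>t$ and invoking the hypothesis $\sum_i \lambda_i(x)/\max_{y}\lambda_i(y)=\infty$ (whose tail still diverges after discarding the finitely many terms $i\leq t$) forces $\sum_{s>t}\Pp{Q}{G_s}=\infty$ whenever $\Pp{Q}{T=t}>0$, contradicting $\sum_{s>t}\Pp{Q}{G_s}\leq 1$. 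Therefore $\Pp{Q}{T=t}=0$ for all $t$, giving $\Pp{Q}{0<N<\infty}=0$, which is the claimed dichotomy. I expect the main obstacle to be exactly this weight bookkeeping in the middle step: the transposition moves the mass sitting at position $t$ out to position $s$ but simultaneously drags along the weight of whatever value $b$ previously occupied $s$, and the argument closes only because the symmetry of $D_s$ leaves all other coordinates untouched while finiteness of $\Xcal$ keeps the residual ratio $\lambda_t(b)/\lambda_t(x)$ bounded below by a positive constant $\alpha$.
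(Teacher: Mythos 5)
Your proof is correct, and it takes a genuinely different route from the paper's. The paper argues by contradiction on the event $\{\sum_i \One{X_i=x}=k\}$ for a fixed finite $k>0$: since this event is exchangeable, conditioning on it preserves $\lambda$-weighted exchangeability (a fact borrowed from the proof of Theorem \ref{thm:main}), and then the marginals $p_i=\Pp{Q'}{X_i=x}$ of the conditioned law satisfy $\sum_i p_i = k < \infty$ while pairwise weighted-swap inequalities $p_i \leq p_j \cdot \frac{\lambda_i(x)/\min_{x'}\lambda_i(x')}{\lambda_j(x)/\max_{x'}\lambda_j(x')}$ force $\sum_j p_j = \infty$, a contradiction. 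You instead decompose $\{0<N<\infty\}$ by the last-occurrence time $T=t$ and, without any conditioning, transport the mass of $\{T=t\}$ to pairwise disjoint events $G_s$ via the transposition $(t,s)$, using the invariance of $D_s$ under that swap; countable additivity ($\sum_{s>t}\Pp{Q}{G_s}\leq 1$) then collides with the divergent series. What your approach buys is self-containedness: it needs only Proposition \ref{prop:wtd_exch_equiv} and finiteness of $\Xcal$ (for the uniform lower bound $\alpha>0$ and the finite sum over $b\neq x$), avoiding the auxiliary fact that conditioning on an $\Ecal_\infty$-event preserves weighted exchangeability. What the paper's approach buys is brevity given machinery already in place (the conditioning fact is reused verbatim from Section \ref{sec:proof_main2}), and a cleaner bookkeeping step, since the identity $\sum_i \Pp{Q'}{X_i=x}=k$ replaces your construction of the events $D_s$, $G_s$ and the verification of their disjointness and swap-invariance. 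One small point worth noting: the hypothesis as printed contains a typo ($\max_{x'\in\Xcal}\lambda_i(x)$ should read $\max_{x'\in\Xcal}\lambda_i(x')$), and you correctly read it in the intended way, matching how the paper invokes the lemma.
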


This result, applied to the \smash{$\lambda_{\mathcal{I}}$}-weighted
exchangeable sequence \smash{$X_{\mathcal{I}}$}, implies that 
\[
\PP{0 < \sum_{j=1}^\infty \one{(X_{\mathcal{I}})_j=x}<\infty} = 0,
\]
or equivalently,
\[
\PP{0 < \sum_{i\in\mathcal{I}}\one{X_i=x}<\infty} = 0.
\]
To complete the proof, defining
\[
p = \PP{\sum_{i\in\mathcal{I}}\one{X_i=x}=0},
\]
we need to prove that $p=0$. Suppose instead that $p>0$.
Recall that by definition of $Q_S$, we know that \smash{$\Pp{Q_S}{
    \sum_{i\geq1}\one{X_i=x}=0} =0$}, and so for sufficiently large $n\geq 1$ we
have 
\[
\PP{\sum_{i=1}^n \one{X_i = x} = 0}  < p.
\]
Fix any such $n$, and define \smash{$\mathcal{I}' = \mathcal{I}\cup
  \{1,\dots,n\}$}. Then applying Lemma \ref{lem:zero_or_inf} to the subsequence
\smash{$X_{\mathcal{I}'}$}, which is \smash{$\lambda_{\mathcal{I}'}$}-weighted
exchangeable, we have 
\[
\PP{0 < \sum_{i\in\mathcal{I}'}\one{X_i=x}<\infty} = 0.
\]
We can also calculate
\begin{multline*}
\PP{0 < \sum_{i\in\mathcal{I}'}\one{X_i=x}<\infty} 
\geq \PP{\sum_{i=1}^n \one{X_i=x}>0, \sum_{i\in\mathcal{I}}\one{X_i =x}=0}\\
\geq \PP{\sum_{i\in\mathcal{I}}\one{X_i =x}=0}- \PP{\sum_{i=1}^n \one{X_i=x}=0}
= p - \PP{\sum_{i=1}^n \one{X_i=x}=0} >0.
\end{multline*}
We have reached a contradiction, as desired.

\subsection{Proof of Lemma \ref{lem:convergence_for_finite_case}}

Fix \smash{$(x,x')\in E^*_S$}. For any $n> \ell \geq 0$, define
\[
\widetilde{P}_{\ell,n}(x;x') = \frac{\sum_{i=\ell+1}^n
  \frac{\min\{\lambda_i(x),\lambda_i(x')\}}{\lambda_i(X_i)}\cdot\one{X_i=x}}{\sum_{i=\ell+1}^n
  \frac{\min\{\lambda_i(x),\lambda_i(x')\}}{\lambda_i(X_i)}\cdot\one{X_i \in\{x,x'\}}}.
\]
Note that
\[
\left|\widetilde{P}_{\ell,n}(x;x')  - \widetilde{P}_n(x;x') \right| \leq \frac{\ell}{\sum_{i=1}^n
  \frac{\min\{\lambda_i(x),\lambda_i(x')\}}{\lambda_i(X_i)}\cdot\one{X_i \in\{x,x'\}}}
\]
by construction, and this denominator tends to infinity almost surely by Lemma
\ref{lem:infinite_sum_for_finite_case}, so we therefore have 
\[
\limsup_{n\to\infty} \widetilde{P}_n(x;x') =\limsup_{n\to\infty}
\widetilde{P}_{\ell,n}(x;x') 
\]
holding almost surely, for all $\ell$. Therefore,
\[
\limsup_{n\to\infty} \widetilde{P}_n(x;x') =\limsup_{\ell\to\infty}
\left(\limsup_{n\to\infty} \widetilde{P}_{\ell,n}(x;x')\right) 
\]
holds almost surely. Next, recall that, 
 for $\ell \geq 1$, we define the $\sigma$-algebra $\Fcal_\ell = \sigma(X_{\ell+1},X_{\ell+2},\dots)$.   
 Since \smash{$\widetilde{P}_{\ell,n}(x;x')$} is
$\Fcal_\ell$-measurable by construction, \smash{$\limsup_{n\to\infty}
  \widetilde{P}_{\ell,n}(x;x')$} is also $\Fcal_\ell$-measurable. Therefore, 
\smash{$\limsup_{\ell\to\infty} (\limsup_{n\to\infty}
  \widetilde{P}_{\ell,n}(x;x'))$} is $\Fcal_\ell$-measurable for every $\ell$,
and is therefore $\Ftail$-measurable.  

Now let
\[
\widetilde{P}'(x;x') =\limsup_{\ell\to\infty} \left(\limsup_{n\to\infty} \widetilde{P}_{\ell,n}(x;x')\right),
\]
and let
\[
\widetilde{P}(x;x') = \begin{cases} \widetilde{P}'(x;x') & \textnormal{if $\widetilde{P}'(x;x') \in(0,1)$}, \\
0.5 &\textnormal{otherwise}.\end{cases}
\]
By construction, \smash{$\widetilde{P}(x;x')$} is $\Ftail$-measurable.

From the work above, we see that \smash{$\widetilde{P}'(x;x') =
  \limsup_{n\to\infty} \widetilde{P}_n(x;x')$} holds almost surely. We next
need to show that \smash{$\lim_{n\to\infty}\widetilde{P}_n(x;x')$} exists almost
surely, to verify that \smash{$\widetilde{P}'(x;x') = \lim_{n\to\infty}
  \widetilde{P}_n(x;x')$} holds almost surely. Define
\[
p_i = p_i(X_i) =
\frac{\min\{\lambda_i(x),\lambda_i(x')\}}{\lambda_i(X_i)}\one{X_i\in\{x,x'\}}
\in [0,1].
\]
Similar to our earlier construction in the proof of Theorem \ref{thm:suff_condition}, draw \smash{$U_1,U_2,\dots \iidsim
  \mathrm{Unif}[0,1]$}, independently of $X$, and define for each $i=1,2,\dots$,       
\[
B_i = \one{U_i\leq p_i(X_i)}.
\]
Define, as before, \smash{$M = \sum_{i=1}^\infty B_i$}, and 
\[
I_m = \min\{i > I_{m-1} : B_i = 1\}, \quad \textnormal{for $m=1,\dots,M$}. 
\]
i.e., $I_1<I_2<\dots$ enumerates all indices $i$ for which $B_i=1$. Then define  
\[
\check{X} = 
\begin{cases}
(X_{I_1},X_{I_2},X_{I_3},\dots) & \textnormal{if $M=\infty$}, \\
(X_{I_1},\dots,X_{I_M},x,x,\dots) & \textnormal{otherwise}.
\end{cases}
\]
Note that \smash{$\check{X}\in\{x,x'\}^{\infty}$} by construction. 
By Lemma \ref{lem:infinite_sum_for_finite_case}, together with the second 
Borel--Cantelli Lemma, we have $M=\infty$ almost surely.
Next, we can verify that \smash{$\check{X}$} is exchangeable---the proof of
this claim is essentially identical to the proof of Lemma
\ref{lem:exch_inf_subseq} (where we establish the analogous result for
establishing Theorem \ref{thm:suff_condition}). Therefore, by de Finetti's
theorem,  
\[
\lim_{m\to\infty}\frac{\one{\check{X}_1 =x} + \dots + \one{\check{X}_m
    =x}}{m}
\]
converges almost surely; as in the proof of Theorem \ref{thm:suff_condition} we
can see that this is equal (almost surely) to 
\[
\lim_{n\to\infty}\frac{\sum_{i=1}^n B_i \cdot \one{X_i = x}}{\sum_{i=1}^n B_i},
\]
and again following the same steps as in the proof of Theorem
\ref{thm:suff_condition}, this  is equal (almost surely) to 
\[
\lim_{n\to\infty}\frac{\sum_{i=1}^n p_i \cdot \one{X_i = x}}{\sum_{i=1}^n p_i} = 
\lim_{n\to\infty}\frac{\sum_{i=1}^n
  \frac{\min\{\lambda_i(x),\lambda_i(x')\}}{\lambda_i(X_i)}\cdot \one{X_i = x}}{\sum_{i=1}^n
  \frac{\min\{\lambda_i(x),\lambda_i(x')\}}{\lambda_i(X_i)}\cdot \one{X_i\in\{x,x'\}}} 
= \lim_{n\to\infty}\widetilde{P}_n(x;x').
\]
In particular this verifies that
\smash{$\lim_{n\to\infty}\widetilde{P}_n(x;x')$} exists almost surely, and 
returning to de Finetti's theorem for the exchangeable subsequence, conditional
on the random value \smash{$\widetilde{P}'(x;x')$} to which the above limit
converges, \smash{$\check{X}_{I_i}$}, $i=1,2,\dots$ are i.i.d.\ draws from
\smash{$\widetilde{P}'(x;x')\cdot\delta_x + (1-\widetilde{P}'(x;x'))\cdot 
  \delta_{x'}$}.    

To complete the proof we need to check that
\smash{$\widetilde{P}'(x;x')\in(0,1)$} almost surely, so that we have
\smash{$\widetilde{P}(x;x') \aseq \lim_{n\to\infty} \widetilde{P}_n(x;x')$}. 
Suppose instead that \smash{$\widetilde{P}'(x;x')=0$} with positive
probability. Then, on this event, we have 
\[
\PPst{\sum_{m=1}^\infty\one{\check{X}_{I_m}=x} =0}{X}=1
\]
almost surely.
In order to have \smash{$\sum_{m=1}^\infty\one{\check{X}_{I_m}=x} =0$}, we
need to have $B_i=0$ for all $i$ with $X_i = x$, that is, 
\[
1 = \PPst{\sum_{m=1}^\infty\one{\check{X}_{I_m}=x} =0}{X} 
= \PPst{B_i =0 \textnormal{ for all $i$ with $X_i=x$}}{X} = \prod_{i: X_i =
  x}(1-p_i).  
\]
Since $p_i>0$ for any $i$ with $X_i=x$ (because $\lambda_i$ is positive-valued
for each $i$), this implies that we must have
\smash{$\sum_{i=1}^\infty\one{X_i=x}=0$} in order for this equality to hold; in 
other words, if \smash{$\widetilde{P}'(x;x')=0$} with positive probability, then
\smash{$\sum_{i=1}^\infty\one{X_i=x}=0$} with positive probability. 
But by definition of $Q_S$ we have \smash{$\PP{\sum_{i=1}^\infty \one{X_i
      =x}>0}=1$}. This is a contradiction, so \smash{$\widetilde{P}'(x;x')=0$}
can only hold with probability zero. We can similarly show that
\smash{$\widetilde{P}'(x;x')=1$} can only hold with probability zero, which
completes the proof. 

\subsection{Proof of Lemma \ref{lem:zero_or_inf}}

Assume for the sake of contradiction that
\[
\Pp{Q}{\sum_{i=1}^\infty \one{X_i = x} =k} > 0,
\]
for some finite and positive $k$. The event that \smash{$\sum_{i=1}^\infty
  \one{X_i = x} =k$}, i.e., that $x$ is observed exactly $k$ many times, is
$\Ecal_\infty$-measurable. Let $Q'$ be the distribution of $X$ conditional on
this event. Then $Q'$ is also $\lambda$-weighted exchangeable (this can be
verified exactly as in the proof of Theorem \ref{thm:main}---specifically, the
proof that $\Lambda_{\dF}\subseteq\Lambda_{\01}$). Now define \smash{$p_i =
  \Pp{Q'}{X_i = x}$}. We must have  
\[
\sum_{i=1}^\infty p_i = \sum_{i=1}^\infty\Pp{Q'}{X_i=x} =
\Ep{Q'}{\sum_{i=1}^\infty\one{X_i =x}} = \Ep{Q'}{k} = k.
\]
We also have, for any $i\neq j$,
\begin{align*}
p_i &=\Ep{Q'}{\one{X_i = x}}\\
&=\Ep{Q'}{\one{X_i = x}\cdot \frac{\lambda_i(X_i)\lambda_j(X_j)}{\lambda_i(X_i)\lambda_j(X_j)}}\\
&=\Ep{Q'}{\one{X_j = x}\cdot \frac{\lambda_i(X_j)\lambda_j(X_i)}{\lambda_i(X_i)\lambda_j(X_j)}}\\
&\leq \Ep{Q'}{\one{X_j = x}} \cdot \frac{\lambda_i(x)/ \min_{x'\in\Xcal}\lambda_i(x')}{\lambda_j(x)/\max_{x'\in\Xcal}\lambda_j(x')}\\
&= p_j \cdot \frac{\lambda_i(x)/ \min_{x'\in\Xcal}\lambda_i(x')}{\lambda_j(x)/\max_{x'\in\Xcal}\lambda_j(x')},
\end{align*}
where the third equality applies Proposition \ref{prop:wtd_exch_equiv}.
In particular, this implies that each $p_i$ is positive (since these
inequalities can be satisfied either if each $p_i$ is positive, or each $p_i$ is
zero---and they cannot all be zero when we choose $k>0$). 

Therefore, for any $i$,
\[
\sum_{j=1}^\infty p_j \geq p_i \cdot \left(\frac{\lambda_i(x)/ \min_{x'\in\Xcal}\lambda_i(x')}{\lambda_j(x)/\max_{x'\in\Xcal}\lambda_j(x')}\right)^{-1}
= p_i \cdot \frac{\min_{x'\in\Xcal}\lambda_i(x')}{\lambda_i(x)} \cdot\sum_{j=1}^\infty\frac{\lambda_j(x)}{\max_{x'\in\Xcal}\lambda_j(x')}.\]
But this is a contradiction, because on the left-hand side we have
\smash{$\sum_{j=1}^\infty p_j=k$} by construction, while on the right-hand side, 
\smash{$p_i \cdot \frac{\min_{x'\in\Xcal}\lambda_i(x')}{\lambda_i(x)} >0$}, and 
\smash{$\sum_{j=1}^\infty\frac{\lambda_j(x)}{\max_{x'\in\Xcal}\lambda_j(x')}
  =\infty$} by the assumption in the lemma.

\end{document}